\newtheorem{theorem}{Theorem}
\newtheorem{lemma}[theorem]{Lemma}
\newtheorem{corollary}[theorem]{Corollary}
\newtheorem{proposition}[theorem]{Proposition}
\newtheorem{remark}[theorem]{Remark}
\newtheorem{definition}[theorem]{Definition}
\newtheorem{example}[theorem]{Example}
\newtheorem{strategy}{Strategy}
\newcommand{\beq}{\begin{equation}}
\newcommand{\eeq}{\end{equation}}
\newcommand{\beqa}{\begin{eqnarray}}
\newcommand{\eeqa}{\end{eqnarray}}
\newcommand{\beqas}{\begin{eqnarray*}}
\newcommand{\eeqas}{\end{eqnarray*}}
\newcommand{\R}{\mathbb{R}}
\newcommand{\ds}{\displaystyle}
\def\argmin{\operatorname{argmin}}
\def\min{\operatorname{min}}
\def\max{\operatorname{max}}
\DeclareMathOperator{\grad}{grad}
\DeclareMathOperator{\diag}{diag}
\renewcommand*{\@biblabel}[1]{\hfill#1.}
\begin{document}
\title{Iteration-complexity and asymptotic analysis  of steepest descent method for multiobjective optimization on Riemannian manifolds}

\author{
O. P. Ferreira \thanks{IME/UFG, Avenida Esperan\c{c}a, s/n, Campus Samambaia,  Goi\^ania, GO, 74690-900, Brazil (e-mails: {\tt orizon@ufg.br},  {\tt lfprudente@ufg.br}).}
\and
 M. S. Louzeiro\thanks{TU Chemnitz, Fakult\"at f\"ur Mathematik, D-09107,  Chemnitz, Germany (e-mail: {\tt mauricio.silva-louzeiro@mathematik.tu-chemnitz.de}).}
\and
L. F. Prudente \footnotemark[1]
}
\vspace{.5cm}
\maketitle

\noindent
\begin{abstract}
The steepest descent method for multiobjective optimization on Riemannian manifolds  with lower bounded sectional curvature is analyzed in this paper. The aim  of the paper is twofold. Firstly, an asymptotic analysis of the method is presented with three different finite procedures for determining the stepsize, namely, Lipschitz  stepsize,  adaptive stepsize  and  Armijo-type stepsize.  The second aim is to present, by assuming that the Jacobian of the objective function is componentwise Lipschitz continuous, iteration-complexity bounds for the method with these three  stepsizes strategies.   In addition, some examples are presented to emphasize the importance of working in this new context. Numerical experiments are provided to illustrate the effectiveness of the method in this new setting and certify the obtained theoretical results.\\
\noindent
{\bf Keywords:}  Steepest descent method,  multiobjective optimization problem , Riemannian manifold, lower bounded curvature, iteration-complexity bound.\\
{\bf AMS } subject classification: 90C33, 49K05, 47J25. 
\end{abstract}
\section{Introduction}
A constrained  multiobjective optimization problem with  constraint  set  ${\cal M}$, consists of $m$ objective functions $f_1, \ldots, f_m$,  that have to be optimized at the same time on ${\cal M}$.  In recent years,  there has been a significant increase in the number of papers  addressing this class of  problems; for example, see \cite{PerezPrudente2018,gphz,Bento2018,Montonen2018,Carrizo2016,Fliege2016,Morovati2016}. Here, among the  methods  designed for solving multiobjective optimization problems, we are  interested in the  steepest descent method.  This method,  was proposed in  \cite{FliegeSvaiter2000} and since of  then  several variants  have been considered, including but  not limited to \cite{FukudaDrummond2013, FukudaDrummond2011, Drummond2005, Drummond2004, BelloCruzBouza2014,BelloCruz2013}. Recently  some  iteration-complexity results  to   gradient method for unconstrained  multi-objective optimization problem were presented in \cite{FliegeVazVicente2018}.   These results have been shown to be the same global rates as for  steepest descent method in scalar objective optimization.

Constrained optimization problems, where the constraint set  $\mathcal{M}$  can be endowed  with  Riemannian  manifold structure,    have  been studied  extensively in the last few years.  Some aspects about the use of Riemannian geometry tools to study these class of problems arises from the following interesting  fact. Endowing $\mathcal{M}$ with a suitable  Riemannian metric, an Euclidean non-convex constrained problem with  constraint  set  ${\cal M}$  can be seen as a Riemannian convex unconstrained  problem.  In addition to this property,  for differentiable functions,   its  gradient  can also become {\it Riemannian  Lipschitz continuous}; see \cite{FerreiraLouzeiroPrudente2018}.    Consequently,   the geometric and algebraic structures  that come  from the Riemannian metric make  possible to greatly reduce the computational cost for solving such  problems.  Indeed, it is well known that the iteration-complexity of several optimization methods for convex optimization problems such that  objective functions have  Lipschitz continuous  gradient  is much lower than    nonconvex  optimization  problems; see for example~\cite{BentoFerreiraMelo2017,JeurisVandebrilVandereycken2012,Rapcsak1997,SraHosseini2015, ZhangReddiSra2016} and references therein.   Furthermore, many   optimization problems are naturally posed on the Riemannian context;  see \cite{EdelmanAriasSmith1999,JeurisVandebrilVandereycken2012,  Smith1994, SraHosseini2015}. Then,  to take advantage of the  intrinsic Riemannian geometric structure, it is preferable to treat these  problems as the ones of  finding singularities of gradient vector fields  on  Riemannian manifolds rather than using Lagrange multipliers  or projection methods; see \cite{Luenberger1972,   Smith1994, UdristeLivro1994}.  In this sense constrained optimization problems can be seen as unconstrained from the point of view of Riemannian geometry. Moreover, intrinsic Riemannian structures can also opens up new research directions  that aid in developing competitive optimization algorithms;  see \cite{AbsilMahonySepulchre2008, EdelmanAriasSmith1999, JeurisVandebrilVandereycken2012, NesterovTodd2002, Smith1994, SraHosseini2015}.   More about   concepts and techniques of optimization on  Riemannian context can be found in  \cite{LiMordukhovichWang2011,LiYao2012,WangLiWangYao2015,WangLiYao2015,Manton2015,ZhangReddiSra2016,ZhangSra2016, UdristeLivro1994, WangLiLopezYao2016} and the bibliographies  therein.

In this paper we will study the steepest descent method for multiobjective optimization on Riemannian manifolds.  The aim  is twofold. First, asymptotic analysis will be done for quasi-convex and convex vectorial  functions.  In fact,   in \cite{BentoFerreiraOliveira2012}  asymptotic analysis of this method has already been done in Riemannian context; see also \cite{BentoNetoSantos2013}. However,  the analysis asymptotic  presented  in these previous works   is  just to stepsize given by  Armijo rule  and it  demand that the Riemannian manifolds have nonnegative sectional curvature.  The asymptotic analysis presented in the present  paper increase the previous ones in two different aspects.  {\it  It is provided an analysis   with three different finite procedures for determining the stepsize, namely, Lipschitz  stepsize,  adaptive stepsize  and  Armijo-type stepsize} and  only {\it lower boundedness  of the curvature} of  the Riemannian manifold is assumed.  The second aim is to present   {\it iteration-complexity bounds for steepest descent method for multiobjective optimization on Riemannian manifolds}. It is worth noting that, our results generalize  to the Riemannian context the results obtained in \cite{FliegeVazVicente2018}.  Besides, we present  one  iteration-complexity  bound   that is new even in Euclidean setting.  In addition, some examples are presented to emphasize the importance of working in this new context. Numerical experiments are provided to illustrate the effectiveness of the method in this new setting  and certify the  obtained theoretical results.

The organization of this paper is as follows. In Section~\ref{sec:auxmultiobj}, some notations and auxiliary results, used throughout of the paper, are placed. In Section~\ref{sec:GradientMult}, we present the algorithm  and the stepsizes that will be used. In Section~\ref{sec4}, the asymptotic convergence analysis of the sequence generated by the steepest descent method is made. In Section~\ref{Sec:IteCompAnalysis}, we present iteration-complexity bounds related to the steepest descent method. In Section~\ref{Sec:ExamplesRn}, we present  examples of vectorial convex functions with componentwise  Lipschitz continuous Jacobian. Numerical experiments are present in Section~\ref{numerical}. Finally, some conclusions are given in Section~\ref{conclusion}.

\section{Notations and  Auxiliary Concepts} \label{sec:auxmultiobj}
In this section, we recall  some  concepts, notations, and basics results  about Riemannian manifolds and vector optimization.   For more details we refer the reader to \cite{doCarmo1992, Sakai1996, UdristeLivro1994,  Rapcsak1997}. 

We denote by $T_p\mathcal{M}$ the {\it tangent space} of a finite dimensional  Riemannian manifold $\mathcal{M}$ at $p$, and by $T\mathcal{M}=\cup_{p\in M}T_p\mathcal{M}$ {\itshape{tangent bundle}} of ${\cal M}$.  The corresponding norm associated to the Riemannian metric $\langle \cdot ,  \cdot \rangle$ is denoted by $\|  \cdot \|$. We use $\ell(\alpha)$ to denote the length of a piecewise smooth curve $\alpha:[a,b]\to \mathcal{M}$. The Riemannian  distance  between $p$ and $q$   in  $\mathcal{M}$ is denoted  by $d(p,q)$.  Denote by ${\cal X}(\mathcal{M})$, the space of smooth vector fields on $\mathcal{M}$. Let $\nabla$ be the Levi-Civita connection associated to $(\mathcal{M}, \langle \cdot,\cdot \rangle)$.    For each $t \in [a,b]$ and a piecewise smooth curve $\alpha:[a,b]\to \mathcal{M}$, the covariant derivative $\nabla$ induces an isometry, relative to $ \langle \cdot , \cdot \rangle  $, $P_{\alpha,a,t} \colon T _{\alpha(a)} {\mathcal{M}} \to T_{\alpha(t)} {\mathcal{M}}$ defined by $ P_{\alpha,a,t}\, v = V(t)$, where $V$ is the unique vector field on the curve $\alpha$ such that $ \nabla_{\alpha'(t)}V(t) = 0$ and $V(a)=v$, the so-called {\it parallel transport} along  of  $\alpha$ joining  $\alpha(a)$ to $\alpha(t)$.  When there is no confusion,  $P_{\alpha,p,q}$ denotes  the parallel transport along  the  segment  $\alpha$ joining  $p$ to $q$.   Given that the geodesic equation $\nabla_{\ \gamma^{\prime}} \gamma^{\prime}=0$ is a second order nonlinear ordinary differential equation, then the geodesic $\gamma=\gamma _{v}( \cdot ,p)$ is determined by its position $p$ and velocity $v$ at $p$.  The restriction of a geodesic to a  closed bounded interval is called a {\it geodesic segment}.  For any two points $p,q\in\mathcal{M}$, $\Gamma_{pq}$ denotes the set of all geodesic segments  $\gamma:[0,1]\rightarrow\mathcal{M}$ with $\gamma(0)=p$ and $\gamma(1)=q$.  A geodesic segment joining $p$ to $q$ in $\mathcal{M}$ is said to be {\it minimal} if its length is equal to $d(p,q)$.      {\it In this paper, all manifolds are assumed to be  connected,   finite dimensional, and complete}. Hopf-Rinow's theorem asserts that any pair of points in a  complete Riemannian  manifold $\mathcal{M}$ can be joined by a (not necessarily unique) minimal geodesic segment.  Owing to  the completeness of the Riemannian manifold $\mathcal{M}$, the {\it exponential map} $\exp_{p}:T_{p}  \mathcal{M} \to \mathcal{M} $ is  given by $\exp_{p}v\,=\, \gamma _{v}(1,p)$, for each $p\in \mathcal{M}$.   For $f: {\cal M} \to\mathbb{R}$ a   differentiable function on $ \mathcal{M}$,   the Riemannian metric induces the mapping   $f\mapsto  \grad f $  which  associates     its {\it gradient} via the following  rule  $\langle \grad f(p), V(p)\rangle:= d f(p)V(p)$, for all $p\in {\cal M}$ and $V\in {\cal X}(\mathcal{M})$. For a twice-differentiable function,  the mapping  $f\mapsto \mbox{hess} f$ associates    its  {\it hessian} via the rule  $ \langle \mbox{hess} f V, V\rangle := d^2 f(V, V)$,  for all $V \in {\cal X}({\cal M})$, where    the last equalities imply that $ \mbox{hess} f V= \nabla_{V}  \grad f$,  for all $V \in {\cal X}({\cal M})$.  Let us to introduce some concepts of vector optimization on a Riemannian  manifold $\mathcal{M}$. Letting ${\cal I}:=\{1,\ldots,m\}$ define ${\mathbb R}^{m}_{+}:=\{x\in {\mathbb R}^m:~ x_{i}\geq 0,~~ i\in {\cal I}\}$ and ${\mathbb R}^{m}_{++}:=\{x\in {\mathbb R}^m: x_{i}> 0,~~ i\in {\cal I} \}$. For $x, \, y \in {\mathbb R}^{m}_{+}$, $y\succeq x$ (or $x \preceq y$) means that $y-x \in {\mathbb R}^{m}_{+}$ and $y\succ x$ (or $x \prec y$) means that $y-x \in {\mathbb R}^{m}_{++}$. Let   $F:=\left(f_1, \ldots, f_m\right):{\cal M} \to \mathbb{R}^m$ be a differentiable function. We denote the {\it Riemannian jacobian} of $F$ at a point $p\in \mathcal{M}$ by  $\nabla F(p)v:=\left(\langle \grad f_1(p), v \rangle, \ldots,\langle \grad f_m(p), v \rangle\right)$, where $v\in T_{p}\mathcal{M}$, and the image of the Riemannian jacobian of $F$ at $p$  by $\mbox{Im} (\nabla F(p)):=\left\{ \nabla F(p)v~: v\in T_pM \right\}.$  A   vectorial function $F:\mathcal{M} \to\mathbb{R}^m$ is said to be {\it convex} on $\mathcal{M}$ if  for any $p,q\in \mathcal{M}$ and $\gamma\in\Gamma_{pq}$  the composition $F\circ\gamma:[0, 1]\to\mathbb{R}$ satisfies $F\circ\gamma(t)\preceq (1-t)F(p)+tF(q),$ for all $t\in[0,1].$ By convexity of $F$, it follows that $\nabla F(p)\gamma'(0)\preceq F(q)-F(p)$. A vectorial function $F$ is called {\it quasi-convex} on $\mathcal{M}$ if, for every $p,q\in \mathcal{M}$ and  $\gamma\in\Gamma_{pq}$, it holds $F(\gamma(t))\preceq \max\{F(p),F(q)\}$,  for all $t\in [0,1]$, where the maximum is considered  coordinate by coordinate. It is immediate of the above definitions that if $F$ is convex then it is quasi-convex. Moreover, if $F$ is a quasi-convex function, than $F(q)\preceq F(p)$ implies $\nabla F(p)\gamma'(0)\preceq 0$. 

The next result plays  an important role in next sections.  Its proof, which will be omitted here,    follows  the same ideas as those presented in the proof of \cite[Lemma 3.2]{WangLiWangYao2015}, with some minor technical adjustments needed to settle it to our goals. {\it For simplifying  our  notations throughout the paper}, we define
\begin{equation} \label{eq:KappaHat}
\kappa<0, \qquad \hat{\kappa}:=\sqrt{|\kappa|}.
\end{equation}
\begin{lemma} \label{lem:comp}
Let $\mathcal{M}$ be a  complete Riemannian manifolds with sectional curvature $K\geq\kappa$. Let  $p,q\in \mathcal{M}$, $p\neq q$,  $v\in T_p\mathcal{M}$,   ${\gamma}:[0,\infty)\longrightarrow\mathcal{M}$ be  defined by ${\gamma}(t)=\mbox{exp} _{p}\left(tv\right)$ and  $\beta:[0,1]\rightarrow \mathcal{M}$ be a minimizing geodesic  with $\beta(0)=p$ and $\beta(1)=q$. Then, for any $t\in[0,\infty)$ there holds
\begin{multline*}
\cosh(\hat{\kappa}d(\gamma(t),q))\leq \cosh(\hat{\kappa}d(p,q))+\\
\hat{\kappa}\cosh(\hat{\kappa}d(p,q))\sinh(t\hat{\kappa}\left\|v\right\|)\left(\frac{t\left\|v\right\|}{2}-\frac{\tanh(\hat{\kappa}d(p,q))}{\hat{\kappa}d(p,q)} \frac{\left\langle v,\beta'(0)\right\rangle}{\left\|v\right\|}\right), 
\end{multline*}
and, consequently, the following inequality  holds 
$$
d^2({\gamma}(t),q)\leq d^2(p,q) + 
 \frac{\sinh\left(\hat{\kappa}t\|v\|\right)}{\hat{\kappa}} \left(t\|v\|\,\frac{\hat{\kappa}d(p,q)}{\tanh\left(\hat{\kappa}d(p,q)\right)}-\frac{2\left\langle v,\beta'(0)\right\rangle}{\left\|v\right\|}\right). 
$$
\end{lemma}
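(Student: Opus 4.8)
The plan is to realize the three points $p$, $q$, $\gamma(t)$ as the vertices of a geodesic hinge and to invoke a comparison theorem for manifolds whose sectional curvature is bounded below. Assume $v\neq 0$ (otherwise $\gamma(t)\equiv p$ and both inequalities are trivial), and set $a:=t\|v\|$, $b:=d(p,q)$ and $c:=d(\gamma(t),q)$. The segment $\gamma|_{[0,t]}$ is a geodesic of length $a$ issuing from $p$, while $\beta$ is a minimizing geodesic from $p$ to $q$ of length $b=\|\beta'(0)\|$; the angle $\theta$ between them at $p$ satisfies $\cos\theta=\langle v,\beta'(0)\rangle/(\|v\|\,b)$. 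Since $\kappa<0$, the model space of constant curvature $\kappa$ is a hyperbolic space, which has no conjugate points, so Toponogov's comparison theorem (hinge version) applies with $\beta$ as the minimal side. It yields $c\le \tilde c$, where $\tilde c$ is the length of the side opposite the same hinge in the model space; as $\cosh$ is increasing on $[0,\infty)$, the hyperbolic law of cosines gives
\begin{equation}\label{eq:plan-topo}
\cosh(\hat\kappa c)\le \cosh(\hat\kappa a)\cosh(\hat\kappa b)-\sinh(\hat\kappa a)\sinh(\hat\kappa b)\cos\theta .
\end{equation}

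To obtain the first displayed inequality I would feed \eqref{eq:plan-topo} a single elementary estimate, namely $\cosh x\le 1+\tfrac{x}{2}\sinh x$ for $x\ge 0$. This follows by noting that $\phi(x):=1+\tfrac{x}{2}\sinh x-\cosh x$ vanishes at $0$ and has derivative $\tfrac12(x\cosh x-\sinh x)\ge 0$, the bracket being nonnegative because $x\cosh x-\sinh x$ also vanishes at $0$ and has derivative $x\sinh x\ge 0$. Applying this to $x=\hat\kappa a$ to replace the factor $\cosh(\hat\kappa a)$ in \eqref{eq:plan-topo}, and then using $\langle v,\beta'(0)\rangle/\|v\|=b\cos\theta$ together with $\cosh(\hat\kappa b)\tanh(\hat\kappa b)=\sinh(\hat\kappa b)$, the right-hand side rearranges exactly into the asserted bound for $\cosh(\hat\kappa d(\gamma(t),q))$.

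For the second (distance-squared) inequality the key is to pass from $\cosh(\hat\kappa\,\cdot\,)$ to squared distances through the convexity of the map $s\mapsto\cosh(\hat\kappa\sqrt{s})$ on $[0,\infty)$. Its second derivative is a positive multiple of $\hat\kappa\sqrt s\,\cosh(\hat\kappa\sqrt s)-\sinh(\hat\kappa\sqrt s)$, which is nonnegative by the very same computation as above; hence the graph lies above its tangent at $s=b^2$, giving
\begin{equation}\label{eq:plan-conv}
\cosh(\hat\kappa c)\ge \cosh(\hat\kappa b)+\frac{\hat\kappa\sinh(\hat\kappa b)}{2b}\bigl(c^2-b^2\bigr).
\end{equation}
Combining \eqref{eq:plan-conv} with the first inequality (or directly with \eqref{eq:plan-topo} and the estimate $\cosh x\le 1+\tfrac x2\sinh x$), the terms $\cosh(\hat\kappa b)$ cancel; solving the resulting inequality for $c^2-b^2$ and simplifying with $\cosh(\hat\kappa b)/\sinh(\hat\kappa b)=1/\tanh(\hat\kappa b)$ produces precisely the stated bound for $d^2(\gamma(t),q)$.

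I expect the only genuine obstacle to be the careful, rigorous invocation of the comparison theorem under a mere lower bound on the curvature: one must check that the hinge is admissible (minimality of $\beta$, the arbitrariness of the $\gamma$-side being harmless since the hyperbolic model has no conjugate points for $\kappa<0$) and identify the comparison angle correctly as the angle between $v$ and $\beta'(0)$. Once \eqref{eq:plan-topo} is in hand, everything else is elementary: both scalar facts used reduce to the single inequality $x\cosh x\ge\sinh x$ for $x\ge 0$, and the remaining manipulations are routine algebra with hyperbolic functions.
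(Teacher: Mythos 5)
Your proof is correct and follows essentially the route the paper intends: the paper omits the argument, deferring to the proof of Lemma~3.2 of Wang--Li--Wang--Yao, which is precisely the Toponogov hinge comparison yielding the hyperbolic law-of-cosines bound $\cosh(\hat\kappa c)\le\cosh(\hat\kappa a)\cosh(\hat\kappa b)-\sinh(\hat\kappa a)\sinh(\hat\kappa b)\cos\theta$, followed by elementary hyperbolic estimates of the kind you use ($\cosh x\le 1+\tfrac{x}{2}\sinh x$ and the convexity of $s\mapsto\cosh(\hat\kappa\sqrt{s})$, both reducing to $x\cosh x\ge\sinh x$). The algebra in both reductions checks out, and the invocation of the hinge version is admissible since $\beta$ is the minimal side and $\kappa<0$ removes any length restriction on the $\gamma$-side.
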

Next we present the definition of Lipschitz continuous gradient vector field; see \cite{da1998geodesic}.
\begin{definition} 
Let $f$ be a differentiable function on the set ${\cal M}$. The gradient vector  field    of $f$  is  said  to be   Lipschitz continuous on ${\cal M}$ with   constant $L\geq 0$ if,   for any $p,q\in{\cal M}$ and $\gamma\in\Gamma_{pq}$, it holds that 
$\left\|P_{{\gamma},p,q} \grad f(p)- \grad f(q)\right\|\leq L\ell(\gamma).$
\end{definition}
 The {\it norm of the hessian}   $\mbox{hess}\,f$   at $p\in{\mathcal M}$  is given by
$$ 
\|\mbox{hess}\,f(p)\|:= \sup  \left\{ \left\|\mbox{hess}\,f(p)v\right \|~:~  v\in T_{p}\mathcal{M}, ~\|v\|=1\right\}.
$$
The next result has similar proof to its Euclidean version and it will be omitted.
\begin{lemma} \label{le:CharactGL}
Let $f:\mathcal{M} \to \mathbb{R}$ be a twice continuously differentiable  function. The gradient vector  field   of $f$  is Lipschitz continuous with   constant $L\geq 0$ if,  and only if,  there exists $L\geq 0$ such that $\|\mbox{hess}\,f(p)\|\le L$, for all $p\in \mathcal{M}$.
\end{lemma}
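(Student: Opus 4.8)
The plan is to prove both implications by linking the parallel transport of $\grad f$ along a geodesic to its covariant derivative, which by the identity $\mbox{hess}\,f\,V=\nabla_V\grad f$ recorded above is precisely the hessian acting on $V$. The common bridge is the elementary fact that, for any smooth vector field $W$ along a curve $\gamma$ with $\gamma(0)=p$,
\[
\frac{d}{dt}\bigl(P_{\gamma,\gamma(t),p}\,W(t)\bigr)=P_{\gamma,\gamma(t),p}\,\nabla_{\gamma'(t)}W(t),
\]
which follows by expanding $W$ in a parallel orthonormal frame along $\gamma$ and using that $P_{\gamma,\gamma(t),p}$ and $P_{\gamma,p,\gamma(t)}$ are mutually inverse isometries.

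For necessity, suppose the gradient field is Lipschitz with constant $L$, fix $p\in\mathcal{M}$ and a unit vector $v\in T_p\mathcal{M}$, and set $\gamma(t)=\exp_p(tv)$. For each small $t>0$ the segment of $\gamma$ joining $p$ to $\gamma(t)$ lies in $\Gamma_{p,\gamma(t)}$ and has length $t\|v\|=t$; since parallel transport is an isometry, the Lipschitz inequality gives
\[
\bigl\|P_{\gamma,\gamma(t),p}\grad f(\gamma(t))-\grad f(p)\bigr\|=\bigl\|\grad f(\gamma(t))-P_{\gamma,p,\gamma(t)}\grad f(p)\bigr\|\le L\,t.
\]
Dividing by $t$ and letting $t\to0^{+}$, the left-hand side tends to $\|\nabla_{v}\grad f\|=\|\mbox{hess}\,f(p)v\|$ by the parallel-transport characterization of the covariant derivative (the special case $t=0$ of the displayed identity). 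Hence $\|\mbox{hess}\,f(p)v\|\le L$, and taking the supremum over unit $v$ yields $\|\mbox{hess}\,f(p)\|\le L$ for every $p$.

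For sufficiency, suppose $\|\mbox{hess}\,f(p)\|\le L$ for all $p$, fix $p,q\in\mathcal{M}$ and $\gamma\in\Gamma_{pq}$, and consider the curve $g(t)=P_{\gamma,\gamma(t),p}\grad f(\gamma(t))$ taking values in the single vector space $T_p\mathcal{M}$, so that $g(0)=\grad f(p)$ and $g(1)=P_{\gamma,q,p}\grad f(q)$. Applying the displayed identity with $W(t)=\grad f(\gamma(t))$ and using $\nabla_{\gamma'(t)}\grad f(\gamma(t))=\mbox{hess}\,f(\gamma(t))\gamma'(t)$ gives $g'(t)=P_{\gamma,\gamma(t),p}\,\mbox{hess}\,f(\gamma(t))\gamma'(t)$, so that $\|g'(t)\|=\|\mbox{hess}\,f(\gamma(t))\gamma'(t)\|\le L\|\gamma'(t)\|$. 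Since $g$ lives in a fixed vector space, the fundamental theorem of calculus together with the isometry of parallel transport yields
\[
\bigl\|P_{\gamma,p,q}\grad f(p)-\grad f(q)\bigr\|=\|g(0)-g(1)\|\le\int_{0}^{1}\|g'(t)\|\,dt\le L\int_{0}^{1}\|\gamma'(t)\|\,dt=L\,\ell(\gamma),
\]
which is exactly the Lipschitz condition.

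I expect the only genuinely delicate point to be the commutation identity between $d/dt$ and parallel transport, together with the bookkeeping of the transport directions $P_{\gamma,\gamma(t),p}$ versus $P_{\gamma,p,\gamma(t)}$; once that is settled, both implications collapse to one-dimensional calculus estimates. This is precisely the Riemannian analogue of the Euclidean proof, with finite differences of $\grad f$ replaced by parallel-transported differences and ordinary derivatives by covariant ones, which is why the paper omits the details.
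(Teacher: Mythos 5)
Your proof is correct, and it is precisely the Riemannian analogue of the Euclidean argument that the paper invokes when it omits the proof: the commutation identity between $d/dt$ and parallel transport reduces both implications to one-dimensional estimates, the difference quotient giving $\|\mbox{hess}\,f(p)v\|\le L$ in one direction and the fundamental theorem of calculus along $g(t)=P_{\gamma,\gamma(t),p}\grad f(\gamma(t))$ giving $\|P_{\gamma,p,q}\grad f(p)-\grad f(q)\|\le L\,\ell(\gamma)$ in the other. The bookkeeping of transport directions and the use of the paper's convention for $\Gamma_{pq}$ are both handled correctly, so nothing is missing.
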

In the following  we present the concept  of Lipschitz continuity for the Riemannian Jacobian  of a  vectorial  function.
\begin{definition} \label{Def:GradLips}
Let  $F:=\left(f_1, \ldots, f_m\right):{\cal M} \to \mathbb{R}^m$ be a differentiable function.  If for each  $f_i:{\cal M} \rightarrow \mathbb{R}$  there exists  a $L_i\geq 0$ such that  $\left\|P_{{\gamma},p,q} \grad f_i(p)- \grad f_i(q)\right\|\leq L_i\ell(\gamma)$,  for any $p,q\in{\cal M}$ and $\gamma\in\Gamma_{pq}$, then we say that   $\nabla F$ is componentwise  Lipschitz continuous on ${\cal M}$  with  constant  $L:= \max_{i=1,\ldots,m}\ L_i$.
\end{definition}
The proof of the next lemma follows, with  appropriate adjustments,  the same idea of  proof of the scalar version presented in   \cite[Corollary~2.1]{BentoFerreiraMelo2017}. {\it Throughout of the paper   we will use the following notation} 
$$
e:=(1,\ldots,1)\in \mathbb{R}^m.
$$
\begin{lemma} \label{le:lc}
Let  $F:=\left(f_1, \ldots, f_m\right):{\cal M} \to \mathbb{R}^m$ be a differentiable function.   Assume that  $\nabla F$  is  componentwise  Lipschitz continuous   on ${\cal M}$ with   constant $L\geq 0$ and $p\in {\cal M}$. Then   there holds 
\[
F( \exp_{p}(tv)) \preceq F(p) + t \nabla F(p)v + t^2\frac{L}{2}\left\|v\right\|^{2}e, \qquad \forall~t\in [0, +\infty),\quad v\in T_p\mathcal{M}.
\]
\end{lemma}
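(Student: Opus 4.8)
The plan is to reduce the vectorial inequality, which by definition of $\preceq$ is componentwise, to $m$ scalar descent inequalities, one for each coordinate function $f_i$. Concretely, it suffices to show that for every $i\in{\cal I}$,
\[
f_i(\exp_p(tv)) \leq f_i(p) + t\langle \grad f_i(p), v\rangle + \frac{L}{2}t^2\|v\|^2,
\]
since stacking these $m$ inequalities and recalling that $\nabla F(p)v = (\langle \grad f_1(p),v\rangle,\ldots,\langle \grad f_m(p),v\rangle)$ and $e=(1,\ldots,1)$ yields exactly the asserted bound. By Definition~\ref{Def:GradLips}, componentwise Lipschitz continuity with constant $L=\max_{i} L_i$ means each $\grad f_i$ is Lipschitz continuous with its own constant $L_i\leq L$; since the quadratic term $\frac{1}{2}t^2\|v\|^2$ is nonnegative, replacing $L_i$ by $L$ only weakens the bound. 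Thus the whole argument collapses to the scalar case.

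For the scalar case, I would fix $i$, write $\gamma(t)=\exp_p(tv)$, and introduce the real function $\phi(s):=f_i(\gamma(s))$. Differentiating along the geodesic gives $\phi'(s)=\langle \grad f_i(\gamma(s)), \gamma'(s)\rangle$, and the key geometric input is that the velocity of a geodesic is parallel along itself: since $\nabla_{\gamma'}\gamma'=0$ and $\gamma'(0)=v$, uniqueness of the parallel field forces $\gamma'(s)=P_{\gamma,0,s}v$, whence $\|\gamma'(s)\|=\|v\|$. The fundamental theorem of calculus then gives
\[
f_i(\gamma(t))-f_i(p)-t\langle \grad f_i(p),v\rangle=\int_0^t\big(\langle \grad f_i(\gamma(s)),P_{\gamma,0,s}v\rangle-\langle \grad f_i(p),v\rangle\big)\,ds.
\]

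Next I would exploit that $P_{\gamma,0,s}$ is a $\langle\cdot,\cdot\rangle$-isometry to rewrite $\langle \grad f_i(p),v\rangle=\langle P_{\gamma,0,s}\grad f_i(p), P_{\gamma,0,s}v\rangle$, so the integrand becomes $\langle \grad f_i(\gamma(s))-P_{\gamma,0,s}\grad f_i(p),\,P_{\gamma,0,s}v\rangle$. Cauchy--Schwarz together with $\|P_{\gamma,0,s}v\|=\|v\|$ bounds this by $\|\grad f_i(\gamma(s))-P_{\gamma,0,s}\grad f_i(p)\|\,\|v\|$. The central step is to recognize that the restriction $\gamma|_{[0,s]}$ is itself a geodesic segment joining $p$ to $\gamma(s)$, of length $\ell(\gamma|_{[0,s]})=s\|v\|$, so the Lipschitz hypothesis yields $\|\grad f_i(\gamma(s))-P_{\gamma,0,s}\grad f_i(p)\|\leq L_i\,s\|v\|$. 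Hence the integrand is at most $L_i s\|v\|^2$, and integrating from $0$ to $t$ produces $\frac{L_i}{2}t^2\|v\|^2\leq\frac{L}{2}t^2\|v\|^2$, completing the scalar estimate.

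I do not anticipate a genuine obstacle; this is the integral form of the descent lemma transplanted to the geodesic setting. The only points requiring care are the identification $\gamma'(s)=P_{\gamma,0,s}v$ with its norm preservation, and the bookkeeping that the segment relevant to the Lipschitz estimate is $\gamma|_{[0,s]}$, of length $s\|v\|$ rather than $t\|v\|$ — getting this length right inside the integral is precisely what delivers the sharp factor $t^2/2$.
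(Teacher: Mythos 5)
Your proof is correct and is precisely the argument the paper has in mind: the paper omits the proof, stating only that it follows the scalar version in \cite[Corollary~2.1]{BentoFerreiraMelo2017}, and that scalar argument is exactly your reduction to coordinates followed by the fundamental theorem of calculus along the geodesic, the identity $\gamma'(s)=P_{\gamma,0,s}v$, the isometry of parallel transport with Cauchy--Schwarz, and the Lipschitz bound over the subsegment of length $s\|v\|$. Your attention to the fact that the relevant segment has length $s\|v\|$ (not $t\|v\|$), which produces the factor $t^2/2$ upon integration, is exactly the ``appropriate adjustment'' the paper alludes to.
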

Next  we  introduce  the concept of quasi-Fej\'er  convergence, which played an important role in the analysis of the gradient method. 
\begin{definition} 
A sequence $\{y_k\}$ in the complete metric space $(\mathcal{M},d)$ is quasi-Fej\'er convergent to a set $W\subset \mathcal{M}$ if, for every $w\in W$, there exist  a sequence $\{\epsilon_k\}\subset\mathbb{R}$ such that $\epsilon_k\geq 0$, $\sum_{k=1}^{\infty}\epsilon_k<+\infty$, and $d^2(y_{k+1},w)\leq d^2(y_k,w)+\epsilon_k$, for all $k=0, 1, \ldots$.
\end{definition}
In the following we state the main  property of the  quasi-Fej\'er concept,   its proof follows the same path as its Euclidean counterpart proved in  \cite{burachik1995full},  by replacing the Euclidean distance by the Riemannian one. 
\begin{theorem}\label{teo.qf}
Let $\{y_k\}$ be a sequence in the complete metric space $(\mathcal{M},d)$. If $\{y_k\}$ is quasi-Fej\'er convergent to a nonempty set $W\subset \mathcal{M}$, then $\{y_k\}$ is bounded. Furthermore, if a cluster point $\bar{y}$ of $\{y_k\}$ belongs to $W$, then $\lim_{k\to\infty}y_k=\bar{y}$.
\end{theorem}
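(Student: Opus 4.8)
The plan is to reduce both claims to elementary analysis of a nonnegative real sequence governed by the scalar recursion in the definition; no structure of $\mathcal{M}$ beyond the metric $d$ enters, so the argument is the Euclidean one with $d$ in place of the norm distance.

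For boundedness, I would fix any $w\in W$ (possible since $W\ne\emptyset$) together with its associated summable sequence $\{\epsilon_k\}$, and telescope the defining inequality. Iterating $d^2(y_{k+1},w)\le d^2(y_k,w)+\epsilon_k$ from $0$ up to $k$ yields $d^2(y_{k+1},w)\le d^2(y_0,w)+\sum_{j=0}^{k}\epsilon_j\le d^2(y_0,w)+\sum_{j=0}^{\infty}\epsilon_j<+\infty$. Hence $\{d(y_k,w)\}$ is uniformly bounded, so $\{y_k\}$ lies in a fixed metric ball about $w$ and is therefore bounded.

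For the convergence statement, assume $\bar y\in W$ is a cluster point and apply the definition with $w=\bar y$ to obtain a summable $\{\epsilon_k\}$ with $a_{k+1}\le a_k+\epsilon_k$, where $a_k:=d^2(y_k,\bar y)\ge 0$. The one genuine obstacle is that $\{a_k\}$ need not be monotone, so having $a_k\to 0$ along a subsequence does not immediately give convergence of the whole sequence. I would remove this by the standard tail-correction: set $t_k:=\sum_{j\ge k}\epsilon_j$, which is finite, nonincreasing, satisfies $t_k\to 0$ and $t_k=\epsilon_k+t_{k+1}$, and put $b_k:=a_k+t_k$. Then $b_{k+1}=a_{k+1}+t_{k+1}\le a_k+\epsilon_k+t_{k+1}=a_k+t_k=b_k$, so $\{b_k\}$ is nonincreasing and bounded below by $0$; hence it converges to some $\ell\ge 0$, and since $t_k\to 0$ it follows that $a_k\to\ell$.

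To finish, I would use the cluster-point hypothesis to extract a subsequence $y_{k_j}\to\bar y$, whence $a_{k_j}\to 0$; comparing with $a_k\to\ell$ forces $\ell=0$, that is $d(y_k,\bar y)\to 0$ and $y_k\to\bar y$. Everything except the monotonization via $\{b_k\}$ is pure telescoping or a subsequence extraction, so that auxiliary step is the only place where any care is needed.
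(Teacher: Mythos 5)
Your proof is correct and is essentially the argument the paper relies on: the paper omits the proof and points to the Euclidean version in Burachik et al., which proceeds exactly by telescoping the defining inequality for boundedness and by combining the summability of the tail $\sum_{j\ge k}\epsilon_j$ with the cluster-point hypothesis to force $d^2(y_k,\bar y)\to 0$. Your tail-correction sequence $b_k=a_k+t_k$ is a clean way to package the standard $\varepsilon$-argument (choose $k_0$ large with both $a_{k_0}$ and the tail small, then propagate forward), and nothing further is needed.
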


{\it Hereafter, we assume that $\mathcal{M}$  is a   complete Riemannian manifolds with sectional curvature $K\geq\kappa$,  where $\kappa<0$}. We point out that for Riemannian manifold with nonnegative sectional curvature, the convergence analysis  of the  steepest descent method  for convex and quasi-convex vector functions is well understood; see for example \cite{BentoFerreiraOliveira2012,BentoNetoSantos2013}.

\section{Steepest Descent for Multiobjective Optimization}\label{sec:GradientMult} 

Let   $F:=\left(f_1, \ldots, f_m\right):{\cal M} \to \mathbb{R}^m$ be a continuously differentiable function. The problem of finding an optimum Pareto  point of $F$,   we denote by
\begin{equation} \label{eq:opp}
\min \{ F(p) ~:~   p\in \mathcal{M}\}.
\end{equation}
A point $p\in \mathcal{M}$  satisfying  $\mbox{Im}(\nabla F(p))\cap (-{\mathbb R}^{m}_{++})=\emptyset$ is  called  {\it critical Pareto}.   An {\it optimum Pareto  point} of $F$ is a point $p_*\in \mathcal{M}$ such that there exists no other $p\in \mathcal{M}$ with $F(p)\preceq F(p_*)$ and $F(p) \neq F(p_*)$. Moreover, a point $p_*\in \mathcal{M}$ is a {\it weak  optimal Pareto} of $F$ if there is no $p\in \mathcal{M}$ with $F(p)\prec F(p_*)$. Consider the following problem 
\begin{equation} \label{eq:dsdd}
\mathop{\min}_{v\in T_{p}\mathcal{M}} \; \left\{ \max_{i\in {\cal I}} \left\langle\grad f_i(p),v \right\rangle+ \frac{1}{2}\|v\|^{2}\right\}, \quad \qquad {\cal I}=\{1,\ldots,m\}.
\end{equation}
Whenever  $p\in \mathcal{M}$  is not critical Pareto, the optimization problem \eqref{eq:dsdd} has only one solution, which is called  {\it  steepest descent direction} for $F$ in $p$ and it is denoted by 
\begin{equation}\label{def:descent.field}
v_p:=\argmin_{v\in T_{p}\mathcal{M}} \; \left\{ \max_{i\in {\cal I}} \left\langle\grad f_i(p),v \right\rangle+ \frac{1}{2}\|v\|^{2}\right\}.
\end{equation}
In the next lemma we state  an important  property of the  steepest descent direction. Its proof can be found in  \cite[Lemma 5.1]{BentoFerreiraOliveira2012}. 
\begin{lemma}\label{lem:convv1}
The steepest descent direction  mapping ${\cal M}\ni p\mapsto v_p \in T_pM$, is a continuous vector field.
\end{lemma}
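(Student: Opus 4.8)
The plan is to exploit the strong convexity of the objective in \eqref{def:descent.field} together with a local trivialization of the tangent bundle, reducing the claim to a standard stability result for parametric strongly convex minimization over a fixed Euclidean space. Writing, for fixed $p$,
\begin{equation*}
\phi_p(v) := \max_{i\in{\cal I}} \langle \grad f_i(p), v\rangle + \tfrac{1}{2}\|v\|^2,
\end{equation*}
I would first record that $\phi_p$ is $1$-strongly convex and coercive on $T_p\mathcal M$, since $v\mapsto\max_i\langle\grad f_i(p),v\rangle$ is convex (a finite maximum of linear functionals) and $\tfrac12\|v\|^2$ is $1$-strongly convex; this is precisely what makes $v_p$ well defined, unique, and hence a genuine single-valued vector field. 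From $\phi_p(0)=0$ and $\phi_p(v)\ge -\big(\max_i\|\grad f_i(p)\|\big)\|v\|+\tfrac12\|v\|^2$ I would extract the a priori bound $\|v_p\|\le 2\max_i\|\grad f_i(p)\|$, which will supply local uniform boundedness once the gradients are controlled.

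Next I would fix $p_0\in\mathcal M$, choose a smooth orthonormal frame $\{E_1,\dots,E_n\}$ on a neighborhood $U$ of $p_0$ (with $n=\dim\mathcal M$), and use it to identify each $T_p\mathcal M$ with $\mathbb R^n$ via the linear isometry $v\mapsto x=(\langle v,E_j(p)\rangle)_{j=1}^n$. Under this identification $\langle\grad f_i(p),v\rangle=\langle a_i(p),x\rangle$ with $a_i(p)_j:=\langle\grad f_i(p),E_j(p)\rangle$, and since $F$ is continuously differentiable and the frame is smooth, each $a_i:U\to\mathbb R^n$ is continuous. The objective then turns into $\Phi(p,x):=\max_{i\in{\cal I}}\langle a_i(p),x\rangle+\tfrac12|x|^2$, jointly continuous in $(p,x)$ and $1$-strongly convex in $x$ with unique minimizer $x(p)$ corresponding to $v_p$. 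The task reduces to showing that $p\mapsto x(p)$ is continuous at $p_0$.

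For that final step I would argue by sequences. Shrinking $U$ to a compact neighborhood $U'$ and setting $M:=\max_i\sup_{p\in U'}|a_i(p)|$, the a priori bound yields $|x(p)|\le 2M$ on $U'$. Given $p_k\to p_0$, the sequence $\{x(p_k)\}$ is bounded; for any cluster point $\bar x$, say $x(p_{k_\ell})\to\bar x$, the optimality inequality $\Phi(p_{k_\ell},x(p_{k_\ell}))\le\Phi(p_{k_\ell},y)$ passes to the limit by joint continuity to give $\Phi(p_0,\bar x)\le\Phi(p_0,y)$ for all $y$, so $\bar x$ minimizes $\Phi(p_0,\cdot)$ and, by uniqueness, $\bar x=x(p_0)$; hence the whole bounded sequence converges to $x(p_0)$. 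I expect the only genuine obstacle to be the very first conceptual point — giving meaning to continuity of a map whose values lie in the varying fibers $T_p\mathcal M$ — which the local frame resolves by moving everything into a single $\mathbb R^n$; after that, the strong-convexity plus uniqueness stability argument is routine.
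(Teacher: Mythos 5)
Your argument is correct. The paper itself gives no proof of Lemma~\ref{lem:convv1}, deferring entirely to \cite[Lemma 5.1]{BentoFerreiraOliveira2012}, and your proof is the standard stability argument that such a proof must reduce to: the objective in \eqref{def:descent.field} is $1$-strongly convex with a unique minimizer, the a priori bound $\|v_p\|\le 2\max_{i}\|\grad f_i(p)\|$ gives local uniform boundedness, and passing to the limit in the optimality inequality plus uniqueness identifies every cluster point with $v_{p_0}$; the only genuinely Riemannian issue is making sense of convergence across the varying fibers $T_p\mathcal{M}$, which you resolve cleanly with a smooth local orthonormal frame (an equivalent device, such as parallel transport or a coordinate trivialization, is what the cited reference relies on). All the ingredients you use are available in the paper's setting, since $F$ is assumed continuously differentiable, so each $p\mapsto\grad f_i(p)$ is continuous and your function $\Phi$ is jointly continuous; there is no gap.
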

Moreover, the vector $v_p$ is the solution of the problem  \eqref{eq:dsdd} if and only if there exist $\mu_j\geq 0$, for  $j\in {\cal I}(v_p):=\{j\in {\cal I}:~ \langle\grad f_j(p),v_p \rangle=\max_{i\in {\cal I}}\langle\grad f_i(p),v_p \rangle\}$,  such that 
\begin{equation} \label{eq:sddlc}
v_p=-\sum_{j\in {\cal I}(v_p)}\mu_j\grad f_j(p), \qquad \qquad  \sum_{j\in {\cal I}(v_p)}\mu_j=1, 
\end{equation}
see \cite[Lemma 4.1]{BentoFerreiraOliveira2012}.  In the following lemma we  state  an important inequality for our convergence analysis   and  an equivalence for a point $p\in {\cal M}$ to be a  critical Pareto.
\begin{lemma}\label{lem:ineq.aux}
Let  $p\in{\cal M}$ and $v_p$ as defined  \eqref{def:descent.field}. Then, 
\begin{equation} \label{eq:AuxIneq}
\max_{i\in { {\cal I}}}\langle\grad f_i(p),v_p \rangle= -\left\|v_p\right\|^2.
\end{equation}
Consequently,  $ \nabla F(p)v_p\preceq   -\left\|v_p\right\|^2e.$
In addition, $p$ is critical Pareto point  of $F$ if, and only if, $\left\|v_p\right\|=0$.
\end{lemma}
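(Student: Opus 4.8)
The plan is to establish the three claims in sequence, the engine being the first-order optimality characterization \eqref{eq:sddlc} of the steepest descent direction. For the identity \eqref{eq:AuxIneq}, I would begin from the representation $v_p=-\sum_{j\in {\cal I}(v_p)}\mu_j\grad f_j(p)$ with $\mu_j\geq 0$ and $\sum_{j\in {\cal I}(v_p)}\mu_j=1$, and simply pair it with $v_p$ to get $\|v_p\|^2=\langle v_p,v_p\rangle=-\sum_{j\in {\cal I}(v_p)}\mu_j\langle\grad f_j(p),v_p\rangle$. Since each index in the active set ${\cal I}(v_p)$ satisfies $\langle\grad f_j(p),v_p\rangle=\max_{i\in {\cal I}}\langle\grad f_i(p),v_p\rangle$ by definition, this common value factors out of the convex combination, leaving $\|v_p\|^2=-\max_{i\in {\cal I}}\langle\grad f_i(p),v_p\rangle$, which is exactly \eqref{eq:AuxIneq}.

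The vectorial inequality then drops out immediately: the $i$-th coordinate of $\nabla F(p)v_p$ is $\langle\grad f_i(p),v_p\rangle$, and each of these is at most the maximum over $i$, which by \eqref{eq:AuxIneq} equals $-\|v_p\|^2$; reading this componentwise gives $\nabla F(p)v_p\preceq -\|v_p\|^2e$. For the criticality equivalence I would argue both implications by contraposition. If $\|v_p\|>0$, the inequality just obtained yields $\nabla F(p)v_p\preceq -\|v_p\|^2e\prec 0$, so the vector $\nabla F(p)v_p$ lies in $\mbox{Im}(\nabla F(p))\cap(-{\mathbb R}^m_{++})$ and $p$ is not critical Pareto.

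For the reverse contrapositive, suppose $p$ is not critical Pareto, so there is $v\in T_p\mathcal{M}$ with $\nabla F(p)v\in -{\mathbb R}^m_{++}$, i.e. $\max_{i\in {\cal I}}\langle\grad f_i(p),v\rangle<0$. Evaluating the objective $\phi(w):=\max_{i\in {\cal I}}\langle\grad f_i(p),w\rangle+\frac{1}{2}\|w\|^2$ of \eqref{eq:dsdd} along $w=tv$, the linear term scales like $t$ while the quadratic term scales like $t^2$, so for all sufficiently small $t>0$ one has $\phi(tv)<0$; since $v_p$ minimizes $\phi$ and $\phi(0)=0$, this forces the optimal value to be negative and hence $v_p\neq 0$. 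Combining the two contrapositives gives the stated equivalence. The one step demanding genuine care, as opposed to bookkeeping, is this last scaling argument: one must exploit the minimality defining $v_p$ together with the strict negativity of the linear term to certify $v_p\neq 0$, whereas everything else reduces to the algebra of the optimality conditions \eqref{eq:sddlc}.
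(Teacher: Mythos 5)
Your proof is correct. For the identity \eqref{eq:AuxIneq} and the componentwise inequality you follow exactly the paper's route: pair the representation \eqref{eq:sddlc} of $v_p$ with $v_p$ itself and use that the coefficients form a convex combination over the active set ${\cal I}(v_p)$, on which the inner products all equal the maximum. The forward direction of the criticality equivalence is also the paper's argument, merely phrased as a contrapositive: non-vanishing of $v_p$ forces $\nabla F(p)v_p\prec 0$, exhibiting an element of $\mbox{Im}(\nabla F(p))\cap(-{\mathbb R}^m_{++})$. Where you genuinely diverge is the converse. The paper simply cites \cite[Lemma 4.2]{BentoFerreiraOliveira2012}, whereas you prove it from scratch: if $p$ is not critical there is $v$ with $\max_{i\in{\cal I}}\langle\grad f_i(p),v\rangle<0$, and since the objective of \eqref{eq:dsdd} evaluated at $tv$ is $t\max_{i\in{\cal I}}\langle\grad f_i(p),v\rangle+\tfrac{t^2}{2}\|v\|^2$, it is strictly negative for small $t>0$; as the optimal value is $0$ when $v_p=0$, minimality forces $v_p\neq 0$. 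This scaling argument is sound (note $v\neq 0$ is automatic since $\nabla F(p)0=0\notin-{\mathbb R}^m_{++}$), and what it buys is self-containedness: the lemma no longer leans on an external reference for the only nontrivial implication.
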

\begin{proof}
Let  $p\in{\cal M}$ and $v_p$ as in  \eqref{def:descent.field}. Thus,  from the first equality in   \eqref{eq:sddlc}  we have 
$$
-\left\|v_p\right\|^2= \left\langle-v_p, v_p \right\rangle= \left\langle  \sum_{j\in {\cal I}(v_p)}\mu_j\grad f_j(p), v_p\right\rangle= \sum_{j\in {\cal I}(v_p)}\mu_j\left\langle \grad f_j(p), v_p\right\rangle. 
$$
Hence, by the definition of ${\cal I}(v_p)$ and the  second  equality in   \eqref{eq:sddlc}, it is easy to verify that \eqref{eq:AuxIneq} holds. The second statement  follows  by using  the definitions of $\nabla F(p)v_p$ and ${\cal I}(v_p)$. We proceed with the prove of the third  statement of the lemma. Assuming that $p$ is a critical Pareto, it follows from the definition  that there exists $i\in {\cal I}$ such that $\left\langle \grad f_i(p),v_p\right\rangle \geq 0$. Then, the by first part of lemma we have $\left\|v_p\right\|=0$. The converse  follows from \cite[Lemma 4.2]{BentoFerreiraOliveira2012} and the proof  is concluded. 
\end{proof}
The proof of the next lemma is a straight combination of Lemma~\ref{le:lc} with  first part of Lemma~\ref{lem:ineq.aux} and will be omited.
\begin{lemma} \label{ineq:grad.lipschi}
 Assume that  $\nabla F$  is   componentwise  Lipschitz continuous   on ${\cal M}$ with   constant $L\geq 0$. Let $p\in\mathcal{M}$ and $v_p$ as defined in \eqref{def:descent.field}.  Then, there holds 
$$
F(\mbox{exp} _{p}\left(t\,v_p\right))\preceq F(p)+\left(\frac{Lt^2}{2}-t\right)\left\|v_p\right\|^2e,  \qquad \forall~  t\in [0,+\infty).
$$
\end{lemma}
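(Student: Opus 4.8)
The plan is to chain together the two ingredients the statement explicitly points to, specializing the first to the steepest descent direction. First I would invoke Lemma~\ref{le:lc} with the particular choice $v=v_p\in T_p\mathcal{M}$, where $v_p$ is as in \eqref{def:descent.field}. This immediately gives, for every $t\in[0,+\infty)$,
\[
F(\exp_p(t\,v_p))\preceq F(p)+t\,\nabla F(p)v_p+\frac{Lt^2}{2}\|v_p\|^2e.
\]
This is the Riemannian descent-type estimate that turns the componentwise Lipschitz continuity of $\nabla F$ into a quadratic upper bound along the geodesic $t\mapsto\exp_p(t\,v_p)$.

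The second step is to control the linear term $t\,\nabla F(p)v_p$ by means of the steepest descent identity. By the first part of Lemma~\ref{lem:ineq.aux} we have the componentwise relation $\nabla F(p)v_p\preceq-\|v_p\|^2e$. Since $t\geq 0$ and the cone ${\mathbb R}^{m}_{+}$ is invariant under multiplication by nonnegative scalars, scaling this inequality by $t$ preserves the order, so that $t\,\nabla F(p)v_p\preceq-t\|v_p\|^2e$.

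The final step is to substitute this bound into the descent estimate and use transitivity of $\preceq$, obtaining
\[
F(\exp_p(t\,v_p))\preceq F(p)-t\|v_p\|^2e+\frac{Lt^2}{2}\|v_p\|^2e=F(p)+\left(\frac{Lt^2}{2}-t\right)\|v_p\|^2e,
\]
which is exactly the asserted inequality. I do not expect any genuine obstacle here, consistent with the paper's remark that the proof is a straight combination of the two earlier results. The only point deserving a moment's care is that the partial order $\preceq$ induced by ${\mathbb R}^{m}_{+}$ is compatible with addition and with multiplication by the nonnegative scalar $t$; this is what legitimizes both the additive combination of the two vectorial inequalities and the rescaling of the linear term without reversing the order. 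Everything else is direct substitution.
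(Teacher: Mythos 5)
Your proof is correct and follows exactly the route the paper indicates: the paper omits the proof but describes it as a straight combination of Lemma~\ref{le:lc} (applied with $v=v_p$) and the first part of Lemma~\ref{lem:ineq.aux}, which is precisely what you carry out. The care you take about compatibility of $\preceq$ with addition and nonnegative scaling is appropriate and completes the argument.
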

Next we  state the steepest descent algorithm in  Riemannian manifold to solve \eqref{eq:opp}.\\

\begin{algorithm}[H]
\begin{description}
\item[ Step 0.] Let $ p_0\in{\cal M}$. Set $k=0$.
\item[ Step 1.]   Compute   $v_k:=v_{p_{k}}$, where  $v_{p_k}$ is defined in \eqref{def:descent.field}. If $v_{p_k}=0$, then {\bf stop}; otherwise,  choose a stepsize $t_k>0$ and compute
\begin{equation} \label{eq:GradMethod}
p_{k+1}:= \mbox{exp} _{p_{k}}\left(t_{k}\, v_{k}\right).
\end{equation}
\item[ Step 2.]  Set $k\leftarrow k+1$ and proceed to  \textbf{Step~1}.
\end{description}
\caption{Steepest descent algorithm in a Riemannian manifold $\mathcal{M}$}
\label{sec:gradient}
\end{algorithm}

Our goal is to analyze  Algorithm~\ref{sec:gradient} with three  different strategies for  choosing  the stepsize $t_k>0$. An analogous analysis done in the scalar case can be found  in \cite{FerreiraLouzeiroPrudente2018}. In the first  strategy we assume that  $\nabla F$  is componentwise  Lipschitz  continuous  and in  the last two without any Lipschitz condition. The  statements  of the strategies are as follows:
\begin{strategy}[Lipschitz stepsize]\label{fixed.step} 
Assume that  $\nabla F$  is   componentwise  Lipschitz continuous   on $\mathcal{M}$ with   constant $L\geq 0$. Let $\varepsilon>0$ and take
\begin{equation}\label{eq:fixed.step}
\varepsilon< t_k\leq  ~ \frac{1}{L}.
\end{equation}
\end{strategy}
Despite knowing that  $\nabla F$ is  componentwise  Lipschitz  continuous, in  general  the Lipschitz  constant is not  computable. Then, the next  strategy can be used to compute the stepsize   without any Lipschitz condition.  However, as we shall  show,  if  $\nabla F$ is componentwise  Lipschitz  continuous  with constant $L>0$ the   stepsize  computed is an approximation to  $1/L$; see the scalar case in  \cite{BeckTeboulle2009,FerreiraLouzeiroPrudente2018}. 
\begin{strategy}[adaptive stepsize]\label{adaptive.step}
Take $\zeta\in(0,1/2]$, $L_0>0$,  $t_0:= L_0^{-1}$, and $0<\eta<1$. Consider $v_k$ is defined as in \eqref{def:descent.field}. Set   $t_k:=   \eta^{i_k}t_{k-1}$, where 
\begin{equation} \label{eq:GradMethodcpl}
i_k:= \min \left\{ i:~F \left(\exp _{p_{k}}\left(\eta^{i}t_{k-1}v_k\right)\right)\preceq  F(p_{k})- \zeta {\eta^{i}}t_{k-1} \|v_k\|^2e,~ i=0,1, \ldots\right\}. 
\end{equation}
\end{strategy}
In the next  remark  we show  that if $\nabla F$  is   componentwise  Lipschitz continuous   on $\mathcal{M}$, the adaptive stepsize can be seen as  an approximation for $1/L$.
\begin{remark}
Suppose that  $\nabla F$  is   componentwise  Lipschitz continuous   on $\mathcal{M}$ with   constant $L> 0$.  Let  $L_{0}> 0$ be   an estimate for $L$ and  $v_k=v_{p_k}$ be  defined as in \eqref{def:descent.field}. Taking  $t=1/L$,  using  Lemma~\ref{ineq:grad.lipschi} and taking into account that $\zeta\leq 1/2$,  we obtain 
$$
F\left(\mbox{exp} _{p_{k}}\left(v_k/L\right)\right)\preceq F(p_k)- \left(\zeta\|v_k\|^2/L\right)e.
$$ 
Hence, it  follows that  $t_k=1/L$ is always accepted for Strategies~\ref{adaptive.step} with $i_k=0$. Therefore, if $ L_0\geq L$ then we have $t_k=  1/L_0$, i.e., the step-size is constant. On the other hand, if   $L_0\leq L$ then owing to $\eta<1$  we  conclude that    $t_k$ in Strategies~\ref{adaptive.step} satisfies 
\begin{equation}\label{des.arm.aux}
\frac{\eta}{ L}\leq t_k\leq \frac{1}{L_0}.
\end{equation}
\end{remark}

In the following strategy a stepsize satisfying an Armijo-type sufficient descent condition is chosen using a backtracking approach.

\begin{strategy}[Armijo-type stepsize]\label{armijo.step}
Let $t_{\max}>t_{\min}>0$, $0<\omega_1<\omega_2<1$ and $\delta\in(0,1)$. Let $v_k=v_{p_k}$ be defined as in \eqref{def:descent.field}. The stepsize  $t_k$ is  chosen  according the following  algorithm:
\begin{description}
\item[ {\sc Step 0}.] Set $\ell=0$ and take ${\hat t}_{k_0}\in [t_{\min}, t_{\max}]$.
\item[  {\sc Step 1}.]   If 
\begin{equation} \label{eq:sl}
 F \left(\exp _{p_{k}}({\hat t}_{k_\ell} v_k)\right)\preceq  F(p_{k})- \delta {\hat t}_{k_\ell} \|v_k\|^2e,
\end{equation}
then set    $t_k:={\hat t}_{k_\ell}$ and  {\bf stop}.
\item[  {\sc Step 2}.]  Choose a stepsize ${\hat t}_{k_{\ell+1}}\in [\omega_1{\hat t}_{k_\ell}, \omega_2 {\hat t}_{k_\ell}]$, set $\ell\leftarrow \ell+1$  and proceed to  {\sc Step~1}.
\end{description}
\end{strategy}
In the next remark we show that, for $\nabla F$     componentwise  Lipschitz continuous on $\mathcal{M}$,   the stepsizes  in  Strategy~\ref{armijo.step} are bounded below by a positive constant. 
\begin{remark}\label{steep.const.strat3}
Assume that  $\nabla F$  is   componentwise  Lipschitz continuous   on $\mathcal{M}$ with   constant $L\geq 0$,  $t_{\max}> 2[1-\delta]/L$ and $t_{\min}<2\omega_1(1-\delta)/L$.  Hence,  for any  $t\in(0, 2[1-\delta]/L]$,   from Lemma~\ref{ineq:grad.lipschi}  we have  
$$
F(\mbox{exp} _{p_{k}}\left(t\,v_k\right))\preceq F(p_k)-\delta t\|v_k\|^2e. 
$$
Therefore,  $t_k$ in Strategies~\ref{armijo.step} satisfies the inequality $t_k>t_{\min}$, for all  $k=0,1, \ldots.$
\end{remark}

Since  well-definedness  of  Strategies~\ref{adaptive.step} and ~\ref{armijo.step}    follows by using  ordinary arguments, we   will  omitted its proof here.  Hence, the sequence $\{p_k\}$ generated by Algorithm~\ref{sec:gradient} with  Strategies~\ref{fixed.step}, ~\ref{adaptive.step} or ~\ref{armijo.step} is well-defined.  Finally we remind  that,  $p$ is a critical Pareto if, and  only if,   $\left\|v_p\right\|=0$.  Therefore, {\it from now on   we assume that  $\left\|v_k\right\|\neq0$, for all $k$.  Moreover, let us denote by $\{p_k\}$  the infinity sequence    generated by  Algorithm~\ref{sec:gradient}}.

\subsection{Asymptotic Convergence Analysis}\label{sec4}
In this section, we analyze  asymptotic convergence of the sequence  $\{p_{k}\}$  generated by Algorithm~\ref{sec:gradient} with Strategies~\ref{fixed.step}, \ref{adaptive.step} and   \ref{armijo.step}. Let us define
$${\cal A}:=\{p\in M: F(p)\preceq F(p_{k}),\;\; k=0,1, \ldots\}.$$
To proceed with our analysis,       {\it from now on, we will assume that the set ${\cal A}$ is non-empty}. A condition guaranteeing this assumption is the existence of accumulation point for the  sequence $\{p_{k}\}$.

\begin{lemma}\label{lem:boud}
Let $\{p_k\}$ be    generated  with any of Strategies~\ref{fixed.step}, \ref{adaptive.step} or  \ref{armijo.step}. Then, 
	\begin{equation}\label{desi.des}
	F(p_{k+1})\preceq F(p_k)- \nu  t_k\left\|v_k\right\|^2e, \qquad k=0,1, \ldots,
	\end{equation}
where $\nu=1/2$ for Strategy~\emph{\ref{fixed.step}},  $\nu=\zeta$ for Strategy~\emph{\ref{adaptive.step}} and $\nu=\delta$ for Strategy~\emph{\ref{armijo.step}}. As a consequence, there holds 	 $\lim_{k\to +\infty}t_k\left\|v_k\right\|^2=~0$.
\end{lemma}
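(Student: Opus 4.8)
The plan is to establish the descent inequality \eqref{desi.des} separately for each of the three stepsize strategies, and then to derive the limit by a telescoping-sum argument that exploits the nonemptiness of ${\cal A}$.

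For Strategy~\ref{fixed.step} I would apply Lemma~\ref{ineq:grad.lipschi} with $t=t_k$ to obtain
$$F(p_{k+1})=F(\exp_{p_k}(t_k v_k))\preceq F(p_k)+\left(\frac{L t_k^2}{2}-t_k\right)\|v_k\|^2 e.$$
Since $t_k\leq 1/L$ forces $L t_k/2\leq 1/2$, the scalar coefficient satisfies $\frac{L t_k^2}{2}-t_k=t_k\bigl(\frac{L t_k}{2}-1\bigr)\leq -\frac{1}{2}t_k$, which yields \eqref{desi.des} with $\nu=1/2$. For Strategies~\ref{adaptive.step} and \ref{armijo.step} the inequality is immediate, being precisely the acceptance criterion that defines the chosen stepsize: the defining condition \eqref{eq:GradMethodcpl} gives \eqref{desi.des} with $\nu=\zeta$ upon recalling $t_k=\eta^{i_k}t_{k-1}$, and the stopping condition \eqref{eq:sl} gives \eqref{desi.des} with $\nu=\delta$.

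For the consequence I would fix any $p\in{\cal A}$ (which exists by assumption) and any index $i\in{\cal I}$. Reading off the $i$-th component of \eqref{desi.des} gives $\nu\,t_k\|v_k\|^2\leq f_i(p_k)-f_i(p_{k+1})$, and summing over $k=0,\ldots,N$ telescopes to
$$\nu\sum_{k=0}^{N}t_k\|v_k\|^2\leq f_i(p_0)-f_i(p_{N+1})\leq f_i(p_0)-f_i(p),$$
where the last inequality uses $f_i(p)\leq f_i(p_{N+1})$, valid because $p\in{\cal A}$. Thus the partial sums of the nonnegative series $\sum_k t_k\|v_k\|^2$ are bounded above by the finite constant $(f_i(p_0)-f_i(p))/\nu$, so the series converges and its general term tends to zero, giving $\lim_{k\to\infty}t_k\|v_k\|^2=0$.

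The descent inequalities are essentially built into the strategies, so the only genuine step is the summation; the point to get right there is that the uniform lower bound $f_i(p)$ supplied by a point of ${\cal A}$ is exactly what makes the telescoping sum bounded, and hence that nonemptiness of ${\cal A}$ is indispensable. I note that no curvature hypothesis enters this particular lemma, since Lemma~\ref{ineq:grad.lipschi} and the stepsize acceptance rules are all that is used.
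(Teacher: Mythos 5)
Your proof is correct and follows essentially the same route as the paper: the descent inequality for Strategy~\ref{fixed.step} via Lemma~\ref{ineq:grad.lipschi} and the bound $Lt_k/2-1\leq -1/2$, the other two strategies directly from their acceptance conditions, and the limit via a telescoping sum bounded below by the values at a point of ${\cal A}$. No gaps.
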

\begin{proof}
The   inequality \eqref{desi.des}  for  Strategies~\mbox{\ref{adaptive.step}} and {\ref{armijo.step}}  follows  from \eqref{eq:GradMethod},  \eqref{eq:GradMethodcpl} and \eqref{eq:sl}, respectively. Now, assume that  $\{p_k\}$ is   generated by using Strategies~\ref{fixed.step}. In  this case, combining    \eqref{eq:GradMethod} with Lemma~\ref{ineq:grad.lipschi} and taking into account that  \eqref{eq:fixed.step} implies  $(Lt_k/2-1)\leq-1/2 $, \eqref{desi.des} follows with $\nu=1/2$.  To proceed with the proof of  the last statement, take $q\in {\cal A}$ and an integer number  $\ell>0$. Thus,  \eqref{desi.des} yields 
$$
0\preceq \sum_{k=0}^{\ell}t_k\left\|v_k\right\|^2e \preceq \frac{1}{\nu} \sum_{k=0}^{\ell}\left(F(p_k)-F(p_{k+1})\right) \preceq \frac{1}{\nu} \left(F(p_0)-F(q)\right), 
$$
with implies  the desired result, and the proof of the lemma is concluded. 
\end{proof}
To simplify the statement and proof  of  the next  result we need to define  three  auxiliary constants.  For that, let $p_0\in \mathcal{M}$ . By using \eqref{desi.des} together with \eqref{eq:fixed.step}, \eqref{des.arm.aux} and \eqref{eq:sl} define the first  constant $ \rho >0$ as follows
\begin{equation} \label{eq:rho}
\sum_{k=0}^{\infty}t^2_k\left\|v_k\right\|^2\leq \rho :=
\begin{cases}
\min_{i\in {\cal I}}\left\{2[f_i(p_0)-f_i(q)]/L:~q\in {\cal A}\right\},   \, \, \, \, \, \,\,\, \mbox{ for  Strategy~\mbox{\ref{fixed.step}}};\\
\min_{i\in {\cal I}}\left\{[f_i(p_0)-f_i(q)]/(\zeta L_0):~q\in {\cal A}\right\},  \, \, \, \mbox{for Strategy~\mbox{\ref{adaptive.step}}};\\
\min_{i\in {\cal I}}\left\{t_{\max}[f_i(p_0)-f_i(q)]/\delta:~q\in {\cal A}\right\},   \,\, \,\mbox{for  Strategy~\mbox{\ref{armijo.step}}}.
\end{cases}
\end{equation}
The other  two auxiliaries  constants ${\cal C}_{\rho,\kappa}^q >0$ and  ${\cal K}_{\rho,\kappa}^q>0$ are defined as follows
\begin{align} 
{\cal C}_{\rho,\kappa}^q  &:=\cosh^{-1}\left(\cosh(\hat{\kappa}d(p_0,q))e^{\frac{1}{2}\left(\hat{\kappa}\sqrt{\rho}\right)\sinh\left(\hat{\kappa}\sqrt{\rho}\right)}\right), \label{eq:qkappa}\\
{\cal K}_{\rho,\kappa}^q  &:= \frac{\sinh\left(\hat{\kappa}\sqrt{\rho}\right)}{\hat{\kappa}\sqrt{\rho}}\frac{{\cal C}_{\rho,\kappa}^q }{\tanh {\cal C}_{\rho,\kappa}^q }, \label{eq:kkappa}
\end{align}
where the constants  ${\hat \kappa}$ and   $\rho$, are defined  in    \eqref{eq:KappaHat} and     \eqref{eq:rho}, respectively.
\begin{lemma}\label{lem:bounded}
Let $\{p_k\}$ be    generated  with any  of  Strategies~\ref{fixed.step}, \ref{adaptive.step} or  \ref{armijo.step} and  $q\in {\cal A}$. Assume that the function  $F$  is  quasi-convex on $\mathcal{M}$. Then,  
\begin{equation}\label{def:dqkapa}
d(p_{k+1},q)\leq \frac{1}{\hat{\kappa}} {\cal C}_{\rho,\kappa}^q,  \qquad  k=0, 1, \ldots. 
\end{equation}
As a consequence,  $\{p_k\}$ is bounded and   the following inequality  holds 
\begin{equation}\label{ineq:qf}
d^2(p_{k+1},q)\preceq d^2(p_k,q) + {\cal K}_{\rho,\kappa}^q  t_k^2\left\|v_k\right\|^2, \qquad \,\,~~~~k=0,1,\ldots.
\end{equation}
\end{lemma}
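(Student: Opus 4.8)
The plan is to apply Lemma~\ref{lem:comp} at each iteration with $p=p_k$, $v=v_k$, $t=t_k$ (so that $\gamma(t_k)=p_{k+1}$ by \eqref{eq:GradMethod}) and with $\beta=\beta_k$ a minimizing geodesic joining $p_k$ to $q$, whose existence is guaranteed by Hopf--Rinow. The decisive preliminary observation is that the inner-product term occurring in both displayed inequalities of Lemma~\ref{lem:comp} has a favourable sign. Indeed, since $q\in{\cal A}$ we have $F(q)\preceq F(p_k)$, so quasi-convexity of $F$ gives $\nabla F(p_k)\beta_k'(0)\preceq 0$, i.e. $\langle\grad f_j(p_k),\beta_k'(0)\rangle\le 0$ for every $j$. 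Writing $v_k$ through its representation \eqref{eq:sddlc} as a convex combination $v_k=-\sum_{j\in{\cal I}(v_k)}\mu_j\grad f_j(p_k)$ with $\mu_j\ge 0$, one obtains
\[
\langle v_k,\beta_k'(0)\rangle=-\sum_{j\in{\cal I}(v_k)}\mu_j\,\langle\grad f_j(p_k),\beta_k'(0)\rangle\ge 0 .
\]
Hence the terms $-\langle v_k,\beta_k'(0)\rangle/\|v_k\|$ and $-2\langle v_k,\beta_k'(0)\rangle/\|v_k\|$ are nonpositive and may simply be discarded from the respective right-hand sides.

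For \eqref{def:dqkapa} I would use the first inequality of Lemma~\ref{lem:comp}. After dropping the inner-product term it reads
\[
\cosh(\hat\kappa d(p_{k+1},q))\le\cosh(\hat\kappa d(p_k,q))\Bigl[1+\tfrac12(\hat\kappa t_k\|v_k\|)\sinh(\hat\kappa t_k\|v_k\|)\Bigr].
\]
Applying $1+x\le e^x$ and iterating down to $k=0$ yields $\cosh(\hat\kappa d(p_{k+1},q))\le\cosh(\hat\kappa d(p_0,q))\exp[\tfrac12\sum_{j=0}^{k}(\hat\kappa t_j\|v_j\|)\sinh(\hat\kappa t_j\|v_j\|)]$. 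To control the exponent I would invoke \eqref{eq:rho}: since $\sum_j t_j^2\|v_j\|^2\le\rho$, in particular $t_j\|v_j\|\le\sqrt\rho$ for each $j$, and because $x\mapsto\sinh(x)/x$ is increasing on $(0,\infty)$ one gets $\sinh(\hat\kappa t_j\|v_j\|)\le(\sinh(\hat\kappa\sqrt\rho)/\sqrt\rho)\,t_j\|v_j\|$. Summing and using \eqref{eq:rho} once more collapses the exponent to at most $\tfrac12(\hat\kappa\sqrt\rho)\sinh(\hat\kappa\sqrt\rho)$, so the right-hand side becomes exactly $\cosh(\hat\kappa d(p_0,q))e^{\frac12(\hat\kappa\sqrt\rho)\sinh(\hat\kappa\sqrt\rho)}=\cosh({\cal C}_{\rho,\kappa}^q)$ by \eqref{eq:qkappa}; monotonicity of $\cosh$ on $[0,\infty)$ then gives \eqref{def:dqkapa}. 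Boundedness of $\{p_k\}$ is immediate, since \eqref{def:dqkapa} together with $\hat\kappa d(p_0,q)\le{\cal C}_{\rho,\kappa}^q$ (a direct consequence of \eqref{eq:qkappa}, as the exponential factor there is $\ge 1$) confines the whole sequence to the closed ball of radius ${\cal C}_{\rho,\kappa}^q/\hat\kappa$ about $q$.

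Finally, \eqref{ineq:qf} follows from the second (consequence) inequality of Lemma~\ref{lem:comp}. Discarding the inner-product term and factoring out $t_k^2\|v_k\|^2$ rewrites the correction as
\[
\frac{\sinh(\hat\kappa t_k\|v_k\|)}{\hat\kappa t_k\|v_k\|}\cdot\frac{\hat\kappa d(p_k,q)}{\tanh(\hat\kappa d(p_k,q))}\cdot t_k^2\|v_k\|^2 .
\]
The first factor is bounded by $\sinh(\hat\kappa\sqrt\rho)/(\hat\kappa\sqrt\rho)$ exactly as before, while for the second I would use that $x\mapsto x/\tanh x$ is increasing together with $\hat\kappa d(p_k,q)\le{\cal C}_{\rho,\kappa}^q$, valid for every $k$ (for $k\ge 1$ this is \eqref{def:dqkapa}, for $k=0$ it is the inequality just noted), to bound it by ${\cal C}_{\rho,\kappa}^q/\tanh{\cal C}_{\rho,\kappa}^q$. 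The product of the two bounds is precisely ${\cal K}_{\rho,\kappa}^q$ from \eqref{eq:kkappa}, which yields \eqref{ineq:qf}.

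The step I expect to be the crux is the passage from the per-iteration multiplicative estimate to the uniform bound \eqref{def:dqkapa}: the interplay of $1+x\le e^x$, the monotonicity of $\sinh(x)/x$, and the summability $\sum_k t_k^2\|v_k\|^2\le\rho$ is what forces the telescoped product to converge to the closed-form constant $\cosh({\cal C}_{\rho,\kappa}^q)$ rather than to diverge, and aligning the two monotonicity arguments with the role of $\sqrt\rho$ is where the care is needed.
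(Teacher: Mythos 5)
Your proof is correct and follows essentially the same route as the paper: the same sign argument via quasi-convexity and \eqref{eq:sddlc}, the same use of both inequalities of Lemma~\ref{lem:comp}, and the same interplay of $1+x\le e^x$, the monotonicity of $\sinh(x)/x$ and $x/\tanh(x)$, and the summability bound \eqref{eq:rho}. The only (harmless) difference is the order of operations in the first part — you exponentiate and telescope before bounding the exponent, whereas the paper first bounds the per-iteration factor by $1+\sigma(t_k\|v_k\|)^2$ — and your explicit remark that $\hat\kappa d(p_0,q)\le{\cal C}_{\rho,\kappa}^q$ covers the $k=0$ case of \eqref{ineq:qf} is a small point the paper leaves implicit.
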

\begin{proof}
For each $k$, let   ${\gamma_k}:[0,\infty)\longrightarrow\mathbb{R}$ be  defined by ${\gamma_k}(t)=\mbox{exp} _{p_k}\left(tv_k\right)$.  Let  $\beta_k:[0,1]\rightarrow \mathcal{M}$ be a minimizing geodesic with $\beta_k(0)=p_k$ and $\beta_k(1)=q$. By using \eqref{eq:sddlc}, the definition of $v_k$, the  quasi-convexity of $F$, and taking into account that  $q\in {\cal A}$, we have
\begin{equation} \label{eq:bds}
\left\langle v_k, \beta'(0)\right\rangle=-\sum_{j\in {\cal I}(v_k)}\mu_j\left\langle \grad f_j(p_k), \beta'(0)\right\rangle\geq 0, \qquad \sum_{j\in {\cal I}(v_k)}\mu_j=1.
\end{equation}
Thus, applying  the first inequality of Lemma~\ref{lem:comp},  with $t=t_k$, $\gamma=\gamma_k$ , $\beta=\beta_k$ and   $p=p_k$, and  using \eqref{eq:GradMethod} and \eqref{eq:bds},   we obtain  
$$
\cosh(\hat{\kappa}d(p_{k+1},q))\leq \cosh(\hat{\kappa}d(p_k,q))\left(1+\frac{1}{2}\left( \hat{\kappa} t_k \left\|v_k\right\|\right)^2\frac{\sinh( \hat{\kappa} t_k\left\|v_k\right\|)}{\hat{\kappa}t_k\left\|v_k\right\|}\right).
$$
  Since  \eqref{eq:rho} implies  $ t_k \left\|v_k\right\|\leq \sqrt{\rho}$,  and   the map  $ (0, +\infty) \ni t \mapsto  \sinh(t)/t$ is increasing, we conclude that 
$$
\cosh(\hat{\kappa}d(p_{k+1},q))\leq \cosh(\hat{\kappa}d(p_k,q))\left(1+\sigma\left( t_k \left\|v_k\right\|\right)^2\right),
$$
where  $\sigma:= \hat{\kappa}(\sinh(\hat{\kappa}\sqrt{\rho}))/(2\sqrt{\rho})$.  Now note  that  the last inequality implies that 
$$
\cosh(\hat{\kappa}d(p_{k+1},q))\leq \cosh(\hat{\kappa}d(p_k,q))e^{\sigma\left( t_k \left\|v_k\right\|\right)^2}.
$$
Therefore, by   using \eqref{eq:rho},  it follows that  $\cosh (\hat{\kappa}d(p_{k+1},q))\leq\cosh(\hat{\kappa}d(p_0,q))e^{\sigma\rho}$ which,  considering the definition of $\sigma$ and \eqref{eq:qkappa}, yields \eqref{def:dqkapa}.  The boundedness of $\{p_k\}$ is immediate from \eqref{def:dqkapa}.   We proceed with the proof of \eqref{ineq:qf}. Now, we apply the  second inequality of  Lemma~\ref{lem:comp} and again we take into account \eqref{eq:GradMethod} and \eqref{eq:bds} to conclude that 
\begin{equation} \label{eq:GenIneq1}
d^2(p_{k+1},q)\leq d^2(p_k,q) +
 \frac{\sinh\left(\hat{\kappa}t_k\|v_k\|\right)}{\hat{\kappa}t_k\|v_k\|}\frac{\hat{\kappa}d(p_k,q)}{\tanh\left(\hat{\kappa}d(p_k,q)\right)} t_k^2\|v_k\|^2. 
\end{equation}
Since    the maps   $ (0, +\infty) \ni t \mapsto  t/\tanh(t) $ and $ (0, +\infty) \ni t \mapsto  \sinh(t)/t$  are  increasing and positive, taking into account    \eqref{def:dqkapa} and   that  $ t_k \left\|v_k\right\|\leq \sqrt{\rho}$, the inequality \eqref{eq:GenIneq1}   becomes  
$$
d^2(p_{k+1},q)\leq d^2(p_k,q) + \frac{\sinh\left(\hat{\kappa}\sqrt{\rho}\right)}{\hat{\kappa}\sqrt{\rho}}\frac{{\cal C}_{\rho,\kappa}^q }{\tanh {\cal C}_{\rho,\kappa}^q }t^2_k\|v_k\|^2.
$$
Therefore,  by  using \eqref{eq:kkappa}    we have the desired inequality.
\end{proof} 
In the next result we show  that  if $F$ is  a  quasi-convex function on a Riemannian manifolds with lower bounded sectional curvature, then $\{p_k\}$  converges  to  a critical Pareto point of $F$.

\begin{theorem}  \label{th:convthe}
Let $\{p_k\}$ be    generated  with any  of  Strategies~\ref{fixed.step}, \ref{adaptive.step} or  \ref{armijo.step}.  If  $F$ is quasi-convex, then  $\{p_k\}$ converges to a critical Pareto point of $F$.
\end{theorem}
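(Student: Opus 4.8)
The plan is to combine the quasi-Fej\'er machinery of Theorem~\ref{teo.qf} with the asymptotic estimates already established. First I would observe that inequality \eqref{ineq:qf} of Lemma~\ref{lem:bounded}, together with the bound $\sum_{k=0}^{\infty} t_k^2\|v_k\|^2 \leq \rho < \infty$ from \eqref{eq:rho}, shows that for every $q\in{\cal A}$ the sequence $\{p_k\}$ satisfies $d^2(p_{k+1},q)\leq d^2(p_k,q)+\epsilon_k$ with $\epsilon_k := {\cal K}_{\rho,\kappa}^q\, t_k^2\|v_k\|^2 \geq 0$ and $\sum_k \epsilon_k \leq {\cal K}_{\rho,\kappa}^q\rho < \infty$. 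Hence $\{p_k\}$ is quasi-Fej\'er convergent to the (nonempty, by hypothesis) set ${\cal A}$, and by Theorem~\ref{teo.qf} it is bounded.

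Next I would produce a cluster point lying in ${\cal A}$. Since $F(p_{k+1})\preceq F(p_k)$ by Lemma~\ref{lem:boud} and each $f_i$ is bounded below along the sequence by $f_i(q)$ for any $q\in{\cal A}$, the componentwise monotone sequence $\{F(p_k)\}$ converges to some $F_*$. Boundedness of $\{p_k\}$ and completeness of ${\cal M}$ (via Hopf--Rinow) yield a subsequence $p_{k_j}\to\bar p$; continuity of $F$ gives $F(\bar p)=F_*\preceq F(p_k)$ for all $k$, so $\bar p\in{\cal A}$. Applying the second part of Theorem~\ref{teo.qf} with $W={\cal A}$ then upgrades this cluster point to a genuine limit: $\lim_{k\to\infty}p_k=\bar p$.

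It remains to show that $\bar p$ is critical Pareto, i.e.\ $\|v_{\bar p}\|=0$. By continuity of the steepest descent field (Lemma~\ref{lem:convv1}) one has $\|v_k\|\to\|v_{\bar p}\|$, and Lemma~\ref{lem:boud} gives $t_k\|v_k\|^2\to 0$. For Strategy~\ref{fixed.step} the bound $t_k>\varepsilon$ forces $\|v_k\|\to 0$ immediately. For Strategies~\ref{adaptive.step} and~\ref{armijo.step} I would argue by contradiction: if $\|v_{\bar p}\|\neq 0$ then $\|v_k\|$ is bounded away from $0$, whence $t_k\to 0$, so the backtracking must reject the previous trial stepsize $\tilde t_k$ (equal to $t_k/\eta$ for Strategy~\ref{adaptive.step}, and comparable to $t_k$ for Strategy~\ref{armijo.step}) for all large $k$, with $\tilde t_k\to 0$. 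Rejection means the sufficient decrease fails in some coordinate; since ${\cal I}$ is finite I pass to a further subsequence fixing that index $i_0$, divide the failed inequality by $\tilde t_k$, and let $k\to\infty$. A mean value argument, together with $p_k\to\bar p$, $v_k\to v_{\bar p}$ and $\tilde t_k\to 0$, sends the resulting difference quotient to $\langle\grad f_{i_0}(\bar p),v_{\bar p}\rangle$, yielding $\langle\grad f_{i_0}(\bar p),v_{\bar p}\rangle\geq -\delta\|v_{\bar p}\|^2$ (resp.\ $\geq -\zeta\|v_{\bar p}\|^2$). Since $\langle\grad f_{i_0}(\bar p),v_{\bar p}\rangle\leq -\|v_{\bar p}\|^2$ by Lemma~\ref{lem:ineq.aux} and $\delta,\zeta<1$, this forces $\|v_{\bar p}\|=0$, a contradiction. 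The hard part is exactly this last step for the two line-search rules: the absence of a Lipschitz hypothesis removes any uniform lower bound on $t_k$, so criticality of the limit has to be extracted from the delicate limiting analysis of the rejected trial steps rather than read off directly from $t_k\|v_k\|^2\to 0$.
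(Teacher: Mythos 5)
Your proposal is correct and follows essentially the same route as the paper: quasi-Fej\'er convergence to ${\cal A}$ via Lemma~\ref{lem:bounded} and Theorem~\ref{teo.qf}, followed by a limiting analysis of rejected trial stepsizes (difference quotients plus Lemma~\ref{lem:ineq.aux}) to show the limit is critical. The only cosmetic differences are that the paper argues directly rather than by contradiction and, for Strategy~\ref{adaptive.step}, keeps the rejected trial step fixed at $C_r=\eta^r t_0$ while $j\to\infty$ (so no mean value theorem or parallel-transport limit is needed there), reserving the delicate transport argument you correctly flag as the hard part for Strategy~\ref{armijo.step} alone.
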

\begin{proof}
Since ${\cal A}$ is non-empty,  Lemma~\ref{lem:bounded} and \eqref{eq:rho} imply that $\{p_k\}$ is bounded and quasi-Fej\'er convergent to set ${\cal A}$.  Taking into account Lemma~\ref{lem:boud}  we conclude  that  $\{ f_s(p_k)\}$  is non-increasing, for all $s=1, \ldots, m$.  Thus, we conclude that all cluster points  of $\{p_k\}$ belongs to ${\cal A}$. Hence,  Theorem~\ref{teo.qf} implies that  $\{p_k\}$ converges   to a point  $\bar{p}\in {\cal A}$. Hence, remais to prove that $\bar{p}$ is a critical Pareto point of $F$. We know  that, for  any of  the three  strategies~ ~\mbox{\ref{fixed.step}},  ~\mbox{\ref{adaptive.step}}  or \mbox{\ref{armijo.step}}, the sequence     $\{t_k\}$ is bounded. Let   $\bar{t}\geq 0$ be a  cluster  point of  $\{t_{k}\}$ and take $\{t_{k_j}\}$ such that  $\lim_{j\to\infty} t_{k_j}= \bar{t}$.  First  we suppose that   $\bar{t}>0$.   Since  $\lim_{j\to\infty} p_{k_j}= \bar{p}$ and $\lim_{j\to\infty} t_{k_j}= \bar{t}$,   \eqref{eq:rho} and  Lemma~\ref{lem:convv1}  imply that $0=\lim_{j\to\infty} t_{k_j}\left\|v_{k_j}\right\|= \bar{t}\left\| v_{\bar{p}}\right\|.$
Thus, considering that we are under the assumption $\bar{t}>0$,  we obtain    $v_{\bar{p}}=0$.  Therefore,  Lemma~\ref{lem:ineq.aux} implies that    $\bar{p}$ is a critical Pareto point of $F$.  Now, we suppose  that   $\bar{t}=0$.  In this case,   we  just need to  analyze  Strategies~\mbox{\ref{adaptive.step}}  and \mbox{\ref{armijo.step}},   due to  Strategy~\ref{fixed.step} we have $\epsilon\leq \bar{t}$. First assume that Strategy~\mbox{\ref{adaptive.step}}  is used and take $r\in\mathbb{N}$. Since  $\lim_{j\to\infty} t_{k_j}= 0$  we  conclude that if $j$ is large enough, $t_{k_j}<\eta^rt_{0}=:C_r$. Thus,  for each   $j$  large enough, from  \eqref{eq:GradMethodcpl} we have
$$
f_{s_j}\left(\exp_{p_{k_j}}(C_r v_{k_j})\right)> f_{s_j}(p_{k_j})- \zeta C_r  \|v_{k_j}\|^2,
$$
for some $s_j\in \{1,\ldots,m\}$. Since the set $\{1,\ldots,m\}$ is finite, without lose of generality, we assume the there exist ${\hat s}$ and a infinite set of index $j$  such that 
$$
f_{\hat s}\left(\exp_{p_{k_j}}(C_r v_{k_j})\right)> f_{\hat s}(p_{k_j})- \zeta C_r  \|v_{k_j}\|^2.
$$
Since  $\lim_{j\to\infty} p_{k_j}= \bar{p}$ and $\lim_{j\to\infty} t_{k_j}= \bar{t}$, letting $j$ goes to $+\infty$ and taking into account that $v_p$ and the exponential map  are  continuous, we obtain
$$
\frac{f_{\hat s}\left(\exp_{\bar{p}}(C_r v_{\bar{p}})\right)-f_{\hat s}(\bar{p})}{C_r}\geq -\zeta \|v_{\bar{p}}\|^2.
$$
Thus, letting $r$ goes to $+\infty$,  yields $\langle\grad f_{\hat s}(\bar{p}),v_{\bar{p}}\rangle \geq  -\zeta \|v_{\bar{p}}\|^2$. Hence,  from Lemma~\ref{lem:ineq.aux} we conclude that    $-\|v_{\bar{p}}\|^2\geq  -\zeta \|v_{\bar{p}}\|^2$ and,  considering that $\zeta\in (0,1/2]$, we have $\left\|v_{\bar{p}}\right\|=0$. Consequently, using  again Lemma~\ref{lem:ineq.aux} we have  $\bar{p}$ is a critical Pareto of $F$. Finally,  assume that  Strategy~\mbox{\ref{armijo.step}} is used.   Since  $\lim_{j\to\infty} t_{k_j}= 0$  we  conclude that if $j$ is large enough we have $t_{k_j}<t_{min}$. Thus,  if $j$ is large enough,  there exists $0<{\hat t}_{j}\leq t_{max}$ such that $ 0<\omega_1{\hat t}_{j}\leq{ t}_{k_j}$ and  
$$
f_{s_j}\left(\exp_{p_{k_j}}({\hat t}_{j} v_{k_j})\right)> f_{s_j}(p_{k_j})-{\hat t}_{j} \delta \|v_{k_j}\|^2,
$$
for some $s_j\in \{1,\ldots,m\}$.  Since the set $\{1,\ldots,m\}$ is finite, without lose of generality, we assume the there exist ${\hat s}$ and a infinite set of index $j$  such that 
$$
\frac{f_{\hat s}\left(\exp_{p_{k_j}}({\hat t}_{j} v_{k_j})\right)- f_{\hat s}(p_{k_j})}{{\hat t}_{j}}>- \delta \|v_{k_j}\|^2.
$$
Let $\gamma_j(t):= \exp_{p_{k_j}}( tv_{k_j})$, for $t>0$, be a geodesic segment. Thus, the mean value theorem implies that there exists   ${\bar t}_{j}\in(0,{\hat t}_{j})$ such that
\begin{equation}\label{ineq:non.arm.stra4}
\left\langle \grad f_{\hat s}\left(\gamma_j( {\bar t}_{j})\right),P_{\gamma_j,0,{\bar t}_{j}}v_{k_j}\right\rangle>- \delta \|v_{k_j}\|^2.
\end{equation}
  On the other hand, let $B_{\epsilon}(\overline{p})\subset\mathcal{M}$ be a totally normal  ball.  Hence, considering that  $\lim_{j\to +\infty} p_{k_j}=\bar{p}$,  Lemma~\ref{lem:convv1} implies that $\lim_{j\to +\infty} v_{k_j}=\bar{v}_{\bar{p}}$.   Moreover, $0<\omega_1{\hat t}_{j}\leq{ t}_{k_j}$ implies that $\lim_{j\to +\infty} \hat{t}_j=0$. Owing to  $0<{\bar t}_{j}\leq {\hat t}_{j}$  we obtain that  $\lim_{j\to +\infty} \bar{t}_j=0$. Hence, for all $j$ large enough we have $\{\bar{t}_j\}\subset (0,1)$ and $\gamma_j(\bar{t}_{j})\in B_{\epsilon}(\overline{p})$, which implies
$$
P_{\gamma_j,0,\bar{t}_{j}}v_{k_j} =\frac{1}{1-\bar{t}_{j}}\exp^{-1}_{\gamma_j(\bar{t}_{j})}\exp_{p_{k_j}}v_{k_j}.
$$
Thus, letting $j$ goes to $+\infty$ and using \cite[Lemma~1.1]{batista2018extragradient},  we conclude that $\lim_{j\rightarrow+\infty} P_{\gamma_j,0,\bar{t}_j}v_{k_j} =\bar{v}_{\bar{p}}$ (a general version for this equality, see \cite[Lemma~1.2]{batista2018extragradient}). Then, letting $j$ goes to $+\infty$ in \eqref{ineq:non.arm.stra4} and taking into account Lemma~\ref{lem:convv1}, that  $\grad f_{\hat s}$  and the exponential map  are  continuous, we obtain   $\langle\grad f_{\hat s}(\bar{p}),v_{\bar{p}}\rangle \geq  -\delta \|v_{\bar{p}}\|^2$. Hence, Lemma~\ref{lem:ineq.aux} implies that  $-\|v_{\bar{p}}\|^2\geq  -\delta \|v_{\bar{p}}\|^2$ and,  considering that $\delta\in (0,1)$, we have $\left\|v_{\bar{p}}\right\|=0$. Consequently, using again Lemma~\ref{lem:ineq.aux} we conclude that $\bar{p}$ is a critical Pareto of $F$.   Therefore,  for all Strategies~\ref{fixed.step}, \ref{adaptive.step} or  \ref{armijo.step},  $\bar{p}$ is a critical Pareto point of $F$, which concludes the proof.
\end{proof}

\begin{corollary}\label{res:fconv100}
Let $\{p_k\}$ be    generated  with any  of  Strategies~\ref{fixed.step}, \ref{adaptive.step} or  \ref{armijo.step}. If $F$ is convex, then  $\{p_k\}$ converges to a weak optimal Pareto  of $F$.
\end{corollary}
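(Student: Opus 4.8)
The plan is to bootstrap off Theorem~\ref{th:convthe}. Since every convex vectorial function is quasi-convex (as recorded right after the definition of quasi-convexity in Section~\ref{sec:auxmultiobj}), the hypotheses of Theorem~\ref{th:convthe} are automatically satisfied, and hence the sequence $\{p_k\}$ converges to some critical Pareto point $\bar{p}$ of $F$. It therefore only remains to upgrade the conclusion from \emph{critical Pareto} to \emph{weak optimal Pareto}, and this is precisely where the stronger convexity hypothesis is used.

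The key step I would carry out is to show that, for convex $F$, every critical Pareto point is necessarily a weak optimal Pareto point. I would argue by contradiction: suppose $\bar{p}$ were not weak optimal, so that there exists $q\in\mathcal{M}$ with $F(q)\prec F(\bar{p})$, that is $f_i(q)<f_i(\bar{p})$ for every $i\in{\cal I}$. Choosing any $\gamma\in\Gamma_{\bar{p}q}$ and invoking the first-order convexity inequality $\nabla F(\bar{p})\gamma'(0)\preceq F(q)-F(\bar{p})$ stated in Section~\ref{sec:auxmultiobj}, I obtain $\langle\grad f_i(\bar{p}),\gamma'(0)\rangle\leq f_i(q)-f_i(\bar{p})<0$ for all $i\in{\cal I}$. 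Hence the tangent vector $v:=\gamma'(0)\in T_{\bar{p}}\mathcal{M}$ satisfies $\nabla F(\bar{p})v\in-\mathbb{R}^m_{++}$, so that $\mbox{Im}(\nabla F(\bar{p}))\cap(-\mathbb{R}^m_{++})\neq\emptyset$, contradicting the criticality of $\bar{p}$.

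Combining the two observations, the limit $\bar{p}$ of $\{p_k\}$ is a critical Pareto point that is forced to be weak optimal Pareto, which is exactly the claim. I do not expect a genuine obstacle here: the first step is an immediate specialization of Theorem~\ref{th:convthe} once one notes that convexity implies quasi-convexity, and the second step is a routine application of the convexity inequality that does not even invoke the curvature bound. The only point deserving care is to make sure the strict inequalities $f_i(q)<f_i(\bar{p})$ propagate correctly through the convexity estimate so as to land in the open cone $-\mathbb{R}^m_{++}$, which is what triggers the contradiction with criticality.
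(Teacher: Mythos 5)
Your proposal is correct and follows essentially the same route as the paper: reduce to Theorem~\ref{th:convthe} via the observation that convexity implies quasi-convexity, then upgrade ``critical Pareto'' to ``weak optimal Pareto'' using convexity. The only difference is that the paper simply cites \cite[Proposition~5.2]{BentoFerreiraOliveira2012} for that last implication, whereas you supply the (correct) short contradiction argument via the first-order convexity inequality yourself.
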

\begin{proof}
Since $F$ is convex,   critical points  are  weak optimal Pareto  of $F$, see \cite[Proposition~5.2]{BentoFerreiraOliveira2012}. Considering that convex functions are  also quasi-convex the result follows from Theorem~\ref{th:convthe}.
\end{proof}
\subsection{Iteration-Complexity Analysis} \label{Sec:IteCompAnalysis}
In this section we present  iteration-complexity bounds related to the steepest descent method with  Strategies~\ref{fixed.step}, ~\ref{adaptive.step} and ~\ref{armijo.step},  for  $F$ having  $\nabla F$ with  componentwise Lipschitz continuous  constant $L>0$. For this purpose, by using  \eqref{eq:fixed.step}, \eqref{des.arm.aux} and Remark~\ref{steep.const.strat3},  define
\begin{equation} \label{def:tmin}
{\xi}:=
\begin{cases}
\epsilon,    \,\qquad    \mbox{~ \;\;\; for  Strategy~\mbox{\ref{fixed.step}}};\\
\eta/ L,   \qquad   \mbox{~for Strategy~\mbox{\ref{adaptive.step}}};\\
t_{\min},    \qquad \mbox{~for  Strategy~\mbox{\ref{armijo.step}}}.
\end{cases}
\end{equation}
The following result extends   the scalar result   \cite[Theorem 3.1]{BentoFerreiraMelo2017} to multiobjective settings. Moreover, it also extends  to Riemannian context  \cite[Theorem 3.1]{FliegeVazVicente2018}. 
\begin{theorem}\label{th:grad1} 
Let $\{p_k\}$ be    generated  with any  of  Strategies~\ref{fixed.step}, \ref{adaptive.step} or  \ref{armijo.step}, and set $ f_{ i}^*:=\inf\{f_i(q):~  q\in {\cal M}\}$, for $i\in {\cal I}$.  Suppose that $f_{ i}^*$  is bounded from below for some $i\in  {\cal I}$, and define $i_*\in  {\cal I}$ such that
$$
 f_{i_*}(p_0)-f_{ i_*}^*:=\min \left\{ f_i(p_0)-f_{i}^*:~  i \in {\cal I} \right\}.
$$
 Then,   for every $N\in \mathbb{N}$,  there  holds
$$
\min \left\{\|v_{k}\|:~ k=0, 1,\ldots, N-1 \right\}\leq \left[\frac{ f_{i_*}(p_0)-f_{ i_*}^*}{\nu {\xi}}\right]^{\frac{1}{2}}\frac{1}{\sqrt{N}},
$$
where $\nu=1/2$ for Strategy~\emph{\ref{fixed.step}},  $\nu=\zeta$ for Strategy~\emph{\ref{adaptive.step}} and $\nu=\delta$ for Strategy~\emph{\ref{armijo.step}}. 
\end{theorem}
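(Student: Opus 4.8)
The plan is to derive the bound from the per-iteration decrease guaranteed by Lemma~\ref{lem:boud}, combined with the uniform lower bound on the stepsizes recorded in \eqref{def:tmin}, and then to convert the resulting telescoping sum into a bound on the smallest direction norm by the standard ``minimum is at most the average'' argument.

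First I would fix the index $i_*$ as in the statement and read off from Lemma~\ref{lem:boud}, componentwise, the scalar inequality $f_{i_*}(p_{k+1}) \le f_{i_*}(p_k) - \nu t_k \|v_k\|^2$ for every $k$, where $\nu$ takes the value $1/2$, $\zeta$ or $\delta$ according to the strategy used. The next step is to remove the dependence on $t_k$: for each of the three strategies the stepsize is bounded below by the constant $\xi$ defined in \eqref{def:tmin}, which is exactly what \eqref{eq:fixed.step}, \eqref{des.arm.aux} and Remark~\ref{steep.const.strat3} provide for Strategies~\ref{fixed.step}, \ref{adaptive.step} and \ref{armijo.step}, respectively. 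Since $\nu>0$ and $\|v_k\|^2\ge 0$, this yields $f_{i_*}(p_{k+1}) \le f_{i_*}(p_k) - \nu \xi \|v_k\|^2$.

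I would then sum this inequality over $k=0,1,\ldots,N-1$. The left-hand side telescopes to $f_{i_*}(p_N)-f_{i_*}(p_0)$, so that $\nu\xi\sum_{k=0}^{N-1}\|v_k\|^2 \le f_{i_*}(p_0)-f_{i_*}(p_N)$. Bounding $f_{i_*}(p_N)\ge f_{i_*}^*$, which is legitimate because $f_{i_*}^*$ is by construction the infimum of $f_{i_*}$ over $\mathcal{M}$ and is finite by the choice of $i_*$ together with the boundedness hypothesis, gives $\nu\xi\sum_{k=0}^{N-1}\|v_k\|^2 \le f_{i_*}(p_0)-f_{i_*}^*$. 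Finally, since the minimum of a finite family of nonnegative numbers is no larger than their average, $N\,\min\{\|v_k\|^2:k=0,\ldots,N-1\}\le \sum_{k=0}^{N-1}\|v_k\|^2$; combining this with the previous estimate and taking square roots produces the claimed bound.

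The argument is essentially routine once the descent inequality is in hand, so I do not expect a serious obstacle. The only point requiring care is the uniform stepsize lower bound $t_k\ge\xi$, which must be checked separately for the three rules; for the adaptive and Armijo-type rules this is precisely where the componentwise Lipschitz continuity of $\nabla F$ enters, through Lemma~\ref{ineq:grad.lipschi} as used in the cited remarks, guaranteeing that backtracking cannot drive the stepsize arbitrarily close to zero. A secondary, purely bookkeeping point is the role of $i_*$: the telescoped inequality holds for every component whose infimum is finite, and selecting $i_*$ as the minimizer of $f_i(p_0)-f_i^*$ merely records the tightest such bound on the right-hand side.
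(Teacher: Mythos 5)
Your proposal is correct and follows essentially the same route as the paper: the descent inequality of Lemma~\ref{lem:boud}, the uniform stepsize lower bound $\xi$ from \eqref{def:tmin}, telescoping over $k=0,\ldots,N-1$, and the ``minimum is at most the average'' step. The only cosmetic difference is that the paper keeps the inequality in vector form and selects the component $i_*$ after summing, whereas you work with the $i_*$-th component from the outset; the content is identical.
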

\begin{proof} 
It follows from Lemma~\ref{lem:boud} that $\nu  t_k\left\|v_k\right\|^{2}e\preceq F(p_k)-F( p_{k+1})$, for all $ k=0, 1, \ldots$. By summing both sides of this inequality  for $k=0, 1, \ldots, N-1$ and  using \eqref{def:tmin}, we obtain 
$$
\nu {\xi}\sum_{k=0}^{N-1}\left\|v_k\right\|^{2}e\preceq F(p_0)-F(p_{N}).
$$
Thus, by the definition of $i_*$, we conclude from the last inequality that 
 $$
 \nu {\xi} N\min\left\{\|v_{k}\|^2:~ k=0, 1,\ldots, N-1\right\}\leq  f_{i_*}(p_0)-f_{i_*}^*,
 $$
  which implies  the  statement of the theorem.  
\end{proof}
\begin{remark}
It is worth mentioning that in the above result it was not necessary to use any hypothesis  about  convexity of $F$  and curvature of ${\cal M}$.
\end{remark}
Now we are going to prove  that under the assumption of convexity Theorem~\ref{th:grad1} can be improved. We begin by presenting an auxiliary inequality. 
\begin{lemma}\label{pr:ltd}
Let $\{p_k\}$ be    generated  with any  of  Strategies~\ref{fixed.step}, \ref{adaptive.step} or  \ref{armijo.step}. Assume that  $F$ is a convex function on $\mathcal{M}$. Then, for  $q\in {\cal A}$ and  each $k$,  there   exist $\mu_{j's}^k\geq 0$ satisfying $\sum_{j\in {\cal I}(v_k)}\mu_j^k=1$ such that
\begin{equation}\label{eq;desgen}
d^2(p_{k+1},q)\leq d^2(p_k,q) +  {\cal K}_{\rho,\kappa}^qt_k^2\left\|v_k\right\|^2 + 2t_k\sum_{j\in {\cal I}(v_k)}\mu_j^k[f_j(q)-f_j(p_k)], 
\end{equation}
where $\rho$ is defined in \eqref{eq:rho}.
\end{lemma}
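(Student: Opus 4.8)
The plan is to retrace the proof of Lemma~\ref{lem:bounded}, but instead of discarding the inner-product term produced by the second inequality of Lemma~\ref{lem:comp}, I would keep it and estimate it through the convexity of $F$. As in that proof, for each $k$ I would set $\gamma_k(t)=\exp_{p_k}(tv_k)$, let $\beta_k\colon[0,1]\to\mathcal{M}$ be a minimizing geodesic with $\beta_k(0)=p_k$ and $\beta_k(1)=q$, and invoke \eqref{eq:sddlc} to write $v_k=-\sum_{j\in{\cal I}(v_k)}\mu_j^k\grad f_j(p_k)$ with $\mu_j^k\geq 0$ and $\sum_{j\in{\cal I}(v_k)}\mu_j^k=1$. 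Applying the second inequality of Lemma~\ref{lem:comp} with $t=t_k$, $v=v_k$, $p=p_k$, $\beta=\beta_k$, and using $p_{k+1}=\exp_{p_k}(t_kv_k)$ from \eqref{eq:GradMethod}, yields
\[
d^2(p_{k+1},q)\leq d^2(p_k,q)+\frac{\sinh(\hat{\kappa}t_k\|v_k\|)}{\hat{\kappa}}\left(t_k\|v_k\|\,\frac{\hat{\kappa}d(p_k,q)}{\tanh(\hat{\kappa}d(p_k,q))}-\frac{2\langle v_k,\beta_k'(0)\rangle}{\|v_k\|}\right).
\]
I would then split the parenthesis into two contributions and estimate them separately.

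The first contribution, the one carrying the factor $d(p_k,q)$, is handled verbatim as in Lemma~\ref{lem:bounded}: using that $t\mapsto\sinh(t)/t$ and $t\mapsto t/\tanh(t)$ are increasing and positive, that $t_k\|v_k\|\leq\sqrt{\rho}$ by \eqref{eq:rho}, and the bound $d(p_k,q)\leq\hat{\kappa}^{-1}{\cal C}_{\rho,\kappa}^q$ from \eqref{def:dqkapa}, this term is bounded above by ${\cal K}_{\rho,\kappa}^q t_k^2\|v_k\|^2$, recalling the definition \eqref{eq:kkappa} of ${\cal K}_{\rho,\kappa}^q$.

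The crux is the second contribution, $-\frac{\sinh(\hat{\kappa}t_k\|v_k\|)}{\hat{\kappa}}\frac{2\langle v_k,\beta_k'(0)\rangle}{\|v_k\|}$, which in Lemma~\ref{lem:bounded} was merely dropped because it is nonpositive. Here I would instead bound it from above. Convexity of $F$ applied along $\beta_k$ gives $\langle\grad f_j(p_k),\beta_k'(0)\rangle\leq f_j(q)-f_j(p_k)$ for each $j$; multiplying by $\mu_j^k\geq 0$, summing over ${\cal I}(v_k)$, and using the expression for $v_k$ gives $-\langle v_k,\beta_k'(0)\rangle\leq\sum_{j\in{\cal I}(v_k)}\mu_j^k[f_j(q)-f_j(p_k)]$. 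The step I expect to be the main obstacle is converting the $\sinh$-factor into the clean factor $t_k$ in the correct direction: since $q\in{\cal A}$ forces $f_j(q)-f_j(p_k)\leq 0$, the sum $\sum_{j\in{\cal I}(v_k)}\mu_j^k[f_j(q)-f_j(p_k)]$ is nonpositive, while $\sinh(x)/x\geq 1$ gives $\frac{\sinh(\hat{\kappa}t_k\|v_k\|)}{\hat{\kappa}\|v_k\|}\geq t_k$; multiplying a nonpositive quantity by the larger factor only decreases it, so the $\sinh$-factor may be replaced by $t_k$ while the inequality is preserved. This bounds the second contribution above by $2t_k\sum_{j\in{\cal I}(v_k)}\mu_j^k[f_j(q)-f_j(p_k)]$, and adding the two estimates produces \eqref{eq;desgen}.
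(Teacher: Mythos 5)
Your proposal is correct and follows essentially the same route as the paper's proof: both apply the second inequality of Lemma~\ref{lem:comp}, use \eqref{eq:sddlc} together with convexity to bound $-\langle v_k,\beta_k'(0)\rangle$ by $\sum_{j}\mu_j^k[f_j(q)-f_j(p_k)]$, control the distance term via the monotonicity of $t\mapsto\sinh(t)/t$ and $t\mapsto t/\tanh(t)$ together with \eqref{def:dqkapa}, and exploit $\sinh(t)/t\geq 1$ against the nonpositivity of $f_j(q)-f_j(p_k)$ to replace the $\sinh$-factor by $t_k$. If anything, your explicit splitting of the two contributions is slightly cleaner than the paper's presentation, which carries the common $\sinh$-factor through an intermediate display before separating the terms.
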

\begin{proof}
For each $k$, let  ${\gamma}_k:[0,\infty)\longrightarrow\mathbb{R}$ be  defined by ${\gamma_k}(t)=\mbox{exp} _{p_k}\left(tv_k\right)$ and $\beta_k:[0,1]\rightarrow \mathcal{M}$ with $\beta_k(0)=~p_k$ and $\beta_k(1)=q$ be  a minimizing geodesic.  Using \eqref{eq:sddlc} and the convexity of $F$  we conclude that exist $\mu_{j's}^k\geq 0$ satisfying $\sum_{j\in {\cal I}(v_k)}\mu_j^k=1$ such that
$$
\left\langle v_k, \beta_k'(0)\right\rangle=-\sum_{j\in {\cal I}(v_k)}\mu_j^k\left\langle \grad f_j(p_k), \beta_k'(0)\right\rangle \geq \sum_{j\in {\cal I}(v_k)}\mu_j^k(f_j(p_k)- f_j(q)).
$$
Applying the second inequality of  Lemma~\ref{lem:comp} with $\beta=\beta_k$,  $\gamma=\gamma_k$ and $t=t_k$ and using the last inequality  we obtain 
\begin{multline} \label{eq:GenIneq}
d^2(p_{k+1},q)\leq d^2(p_k,q) +\\
 \frac{\sinh\left(\hat{\kappa}t_k\|v_k\|\right)}{\hat{\kappa}t_k\|v_k\|}\left(\frac{\hat{\kappa}d(p_k,q)}{\tanh\left(\hat{\kappa}d(p_k,q)\right)}t_k^2\|v_k\|^2+
2t_k\sum_{j\in {\cal I}(v_k)}\mu_j^k(f_j(q)-f_j(p_k))\right). 
\end{multline}
Since   $ (0, +\infty) \ni t \mapsto  t/\tanh(t) $ and $ (0, +\infty) \ni t \mapsto  \psi(t):=\sinh(t)/t$  are  increasing, taking into account  that  \eqref{eq:rho} implies $ t_k \left\|v_k\right\|\leq \sqrt{\rho}$,  and using \eqref{def:dqkapa}, the inequality   \eqref{eq:GenIneq} becomes 
\begin{multline*}
d^2(p_{k+1},q)\leq d^2(p_k,q) +  \\
\frac{\sinh\left(\hat{\kappa}\sqrt{\rho}\right)}{\hat{\kappa}\sqrt{\rho}}\left(\frac{{\cal C}_{\rho,\kappa}^q }{\tanh {\cal C}_{\rho,\kappa}^q }t^2_k\|v_k\|^2+ 2t_k\sum_{j\in {\cal I}(v_k)}\mu_j^k(f_j(q)-f_j(p_k))\right). 
\end{multline*}
Therefore, due to $f_j(q)-f_j(p_k)\leq 0$  and $ \psi$ be bounded from below by $1$,  the inequality \eqref{eq;desgen}  follows by using \eqref{eq:kkappa},  which concludes the proof.
\end{proof}
The next result, with minor adjustments,  is a generalization of \cite[Theorem 4.1]{FliegeVazVicente2018}  to Riemannian setting,  when the Armijo's type strategy is used.
\begin{proposition}\label{prop:convcomb}
Let $\{p_k\}$ be    generated  with any  of  Strategies~\ref{fixed.step}, \ref{adaptive.step} or  \ref{armijo.step}. Assume that  $F$ is a convex function on $\mathcal{M}$ and $q\in {\cal A}$.  Then,   for every $N\in \mathbb{N}$,  there are non-negative numbers $\lambda_1,\ldots,\lambda_m$ with $\sum_{i=1}^{m}\lambda_i=1$, satisfying
\begin{equation}\label{teo:convcom}
\sum_{i=1}^m\lambda_i[f_i(p_N)-f_i(q)]\leq \frac{d^2(p_0,q) +  {\cal K}_{\rho,\kappa}^q \rho}{2{\xi} N}, 
\end{equation}
where $\rho$ is defined in \eqref{eq:rho}.
\end{proposition}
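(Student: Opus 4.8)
The plan is to derive the bound by summing the inequality of Lemma~\ref{pr:ltd} over a telescoping window and then averaging the iteration-dependent multipliers into a single convex combination. First I would rewrite \eqref{eq;desgen} in the isolated form
$$
2t_k\sum_{j\in {\cal I}(v_k)}\mu_j^k[f_j(p_k)-f_j(q)] \leq d^2(p_k,q)-d^2(p_{k+1},q)+{\cal K}_{\rho,\kappa}^q t_k^2\|v_k\|^2,
$$
and sum both sides for $k=0,1,\ldots,N-1$. The distance terms telescope, and discarding $-d^2(p_N,q)\leq 0$ together with the summability bound $\sum_{k=0}^{\infty}t_k^2\|v_k\|^2\leq\rho$ coming from \eqref{eq:rho} yields
$$
\sum_{k=0}^{N-1}2t_k\sum_{j\in {\cal I}(v_k)}\mu_j^k[f_j(p_k)-f_j(q)] \leq d^2(p_0,q)+{\cal K}_{\rho,\kappa}^q\rho.
$$

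Next I would lower bound the left-hand side. Since $q\in{\cal A}$ gives $F(q)\preceq F(p_k)$, every gap $f_j(p_k)-f_j(q)$ is non-negative, so the stepsize lower bound $t_k\geq{\xi}$ from \eqref{def:tmin} may be applied termwise without reversing the inequality. Moreover, Lemma~\ref{lem:boud} shows that each $\{f_j(p_k)\}$ is non-increasing, hence $f_j(p_k)\geq f_j(p_N)$ for every $k\leq N$, so replacing $p_k$ by $p_N$ in each summand (the multipliers being non-negative) only decreases it. Together these two substitutions produce
$$
2{\xi}\sum_{k=0}^{N-1}\sum_{j\in {\cal I}(v_k)}\mu_j^k[f_j(p_N)-f_j(q)]\leq d^2(p_0,q)+{\cal K}_{\rho,\kappa}^q\rho.
$$

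The decisive step is to repackage the changing weights into one convex combination. Extending each $\mu_i^k$ by zero whenever $i\notin{\cal I}(v_k)$ and setting $\lambda_i:=\tfrac1N\sum_{k=0}^{N-1}\mu_i^k$, I would verify $\lambda_i\geq 0$ and, using $\sum_{j\in{\cal I}(v_k)}\mu_j^k=1$ at each $k$, that $\sum_{i=1}^m\lambda_i=1$. The double sum then collapses to $N\sum_{i=1}^m\lambda_i[f_i(p_N)-f_i(q)]$, and dividing by $2{\xi}N$ delivers \eqref{teo:convcom}. The main obstacle is precisely this bookkeeping: the active index sets ${\cal I}(v_k)$ and the weights $\mu_j^k$ vary from iteration to iteration, so they must be zero-extended and averaged correctly, while the non-negativity of the gaps (from $q\in{\cal A}$) and the componentwise monotonicity of $F$ along the sequence (from the descent property of Lemma~\ref{lem:boud}) are exactly what legitimize the termwise replacements of $t_k$ by ${\xi}$ and of $p_k$ by $p_N$.
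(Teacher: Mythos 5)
Your proposal is correct and follows essentially the same route as the paper: apply Lemma~\ref{pr:ltd}, use non-negativity of the gaps (from $q\in{\cal A}$) to replace $t_k$ by $\xi$, telescope the distances, invoke the bound $\sum_k t_k^2\|v_k\|^2\le\rho$, use monotonicity of each $f_i(p_k)$ to pass from $p_k$ to $p_N$, and average the zero-extended multipliers into $\lambda_i:=\frac1N\sum_{k=0}^{N-1}\mu_i^k$. The only cosmetic difference is that you substitute $t_k\ge\xi$ after summing rather than before, which is equally valid.
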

\begin{proof}
 Since $f_i(p_k)-f_i(q)\geq 0$ for all $i$,  Lemma~\ref{pr:ltd} and \eqref{def:tmin} implies  there   exist $\mu_{i's}^k\geq 0$ such  that 
$$
2{\xi}\sum_{i=1}^m\mu^k_i\left(f_i(p_k)-f_i(q)\right)   \leq  d^2(p_k,q)-d^2(p_{k+1},q) + {\cal K}_{\rho,\kappa}^qt_k^2\left\|v_k\right\|^2, 
$$
 and $\sum_{i=1}^m\mu_i^k=1$, where for each $k$, define $\mu_i^k:=0$ for all $i\notin{\cal I}(v_k)$.
By summing both sides of this inequality  for $k=0, 1, \ldots, N-1$, and using \eqref{eq:rho} follows
$$
2{\xi}\sum_{k=0}^{N-1}\sum_{i=1}^m\mu^k_i\left(f_i(p_k)-f_i(q)\right)  \preceq d^2(p_0,q) + {\cal K}_{\rho,\kappa}^q \rho.
$$
Since ${f_i(p_k)}$ is a decreasing sequence for each $i\in\{1,\ldots,m\}$, by some algebraic manipulations in the previous inequality we have
$$
\sum_{i=1}^m\left[\frac{1}{N}\sum_{k=0}^{N-1}\mu^k_i\right][f_i(p_N)-f_i(q)]   \leq \frac{d^2(p_0,q) + {\cal K}_{\rho,\kappa}^q \rho}{2{\xi} N}.
$$
Defining $\lambda_i:=\sum_{k=0}^{N-1}\mu^k_i/N$ we obtain the inequality in \eqref{teo:convcom}. To complete the proof, we have show that $\sum_{i=1}^m\lambda_i=1$. For that, it is sufficient  to  note that 
$$
\sum_{i=1}^m\lambda_i=\frac{1}{N}\sum_{i=1}^m\sum_{k=0}^{N-1}\mu^k_i=\frac{1}{N}\sum_{k=0}^{N-1}\sum_{i=1}^m\mu^k_i, 
$$
and    $\sum_{i=1}^m\mu_i^k=1$ for each $k$.
\end{proof}
Finally we are ready to   present the main result of this section, namely, the improvement of Theorem~\ref{th:grad1}. We remark that this result is new, even in Euclidean  context.
\begin{theorem}
Let $\{p_k\}$ be    generated  with any  of  Strategies~\ref{fixed.step}, \ref{adaptive.step} or  \ref{armijo.step}. Assume that  $F$ is a convex function on $\mathcal{M}$ and $q\in {\cal A}$.  Then,   for every $N\in \mathbb{N}$,  there  holds
$$
\min \left\{\|v_{k}\|:~ k=0, 1,\ldots, N \right\}\leq \left(\frac{2\left(d^2(p_0,q) +  {\cal K}_{\rho,\kappa}^q \rho\right)}{\nu {\xi}^2}\right)^{\frac{1}{2}}\frac{1}{N}.  
$$
where $\rho$ is defined in \eqref{eq:rho} and  $\nu=1/2$ for Strategy~\emph{\ref{fixed.step}},  $\nu=\beta$ for Strategy~\emph{\ref{adaptive.step}} and $\nu=\delta$ for Strategy~\emph{\ref{armijo.step}}. 
\end{theorem}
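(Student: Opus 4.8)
The plan is to sharpen the $O(1/\sqrt{N})$ estimate of Theorem~\ref{th:grad1} to $O(1/N)$ by a \emph{halving} argument that couples the per-iteration decrease of Lemma~\ref{lem:boud} with the averaged functional bound of Proposition~\ref{prop:convcomb}. Fix $N\in\mathbb{N}$, write $D:=d^2(p_0,q)+{\cal K}_{\rho,\kappa}^q\rho$, and set $\ell:=\lceil N/2\rceil$. First I would invoke Proposition~\ref{prop:convcomb} \emph{at the intermediate index $\ell$} rather than at $N$: this produces nonnegative weights $\lambda_1,\ldots,\lambda_m$ with $\sum_{i=1}^m\lambda_i=1$ and
$$
\sum_{i=1}^m\lambda_i\left[f_i(p_\ell)-f_i(q)\right]\leq \frac{D}{2\xi\ell}.
$$

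Next I would return to the componentwise descent inequality. From Lemma~\ref{lem:boud} together with the uniform lower bound $t_k\geq\xi$ recorded in \eqref{def:tmin} (valid for each rule via \eqref{eq:fixed.step}, \eqref{des.arm.aux} and Remark~\ref{steep.const.strat3}), one has $\nu\xi\|v_k\|^2\leq f_i(p_k)-f_i(p_{k+1})$ for every $i\in{\cal I}$ and every $k$. Multiplying by $\lambda_i$, summing over $i$, and using $\sum_i\lambda_i=1$ turns this into the scalarized inequality $\nu\xi\|v_k\|^2\leq\sum_{i=1}^m\lambda_i[f_i(p_k)-f_i(p_{k+1})]$. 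Telescoping over $k=\ell,\ldots,N$ and discarding the nonnegative terminal term $\sum_i\lambda_i[f_i(p_{N+1})-f_i(q)]\geq 0$ (nonnegative because $q\in{\cal A}$ forces $f_i(p_{N+1})\geq f_i(q)$) yields
$$
\nu\xi\sum_{k=\ell}^{N}\|v_k\|^2\leq \sum_{i=1}^m\lambda_i\left[f_i(p_\ell)-f_i(q)\right]\leq\frac{D}{2\xi\ell}.
$$

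To finish, I would bound the left-hand side from below by the minimum: since $\{\ell,\ldots,N\}\subset\{0,\ldots,N\}$, the sum is at least $(N-\ell+1)\min\{\|v_k\|^2:k=0,\ldots,N\}$. Combining the two displays gives
$$
\min\left\{\|v_k\|^2:k=0,\ldots,N\right\}\leq\frac{D}{2\nu\xi^2\,\ell(N-\ell+1)},
$$
and an elementary check that $\ell=\lceil N/2\rceil$ makes $\ell(N-\ell+1)\geq N^2/4$ for every $N\geq 1$ produces the claimed bound after taking square roots.

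The conceptual crux—and the only genuinely nonroutine step—is the decision to apply Proposition~\ref{prop:convcomb} at the halfway index $\ell\approx N/2$ and to telescope the descent only over the \emph{second half} $k=\ell,\ldots,N$: this is precisely what converts the $O(1/\xi\ell)$ functional gap and the factor $N-\ell+1\approx N/2$ into an $O(1/N^2)$ bound for $\min_k\|v_k\|^2$, upgrading the rate by a square root. Everything else is bookkeeping: verifying $t_k\geq\xi$ for each stepsize rule, using $q\in{\cal A}$ to drop the terminal term, and confirming the inequality $\ell(N-\ell+1)\geq N^2/4$. One should also note, as in the preceding results, that the weights $\lambda_i$ depend on $\ell$ (hence on $N$); this causes no difficulty, since they enter only inside a single convex combination of the componentwise descent inequalities, and the steepest-descent multiplier structure \eqref{eq:sddlc} is never needed beyond what Proposition~\ref{prop:convcomb} already supplies.
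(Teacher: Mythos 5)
Your argument is correct and follows essentially the same route as the paper: apply Proposition~\ref{prop:convcomb} at the halfway index $\lceil N/2\rceil$, telescope the descent inequality of Lemma~\ref{lem:boud} over the second half $k=\lceil N/2\rceil,\ldots,N$ with $t_k\geq\xi$, and divide by the roughly $N/2$ terms in that range. The only cosmetic difference is that you form the convex combination with the $\lambda_i$ before telescoping (and make the count $\ell(N-\ell+1)\geq N^2/4$ explicit), whereas the paper telescopes each component first; the resulting bound and constants are identical.
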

\begin{proof}
Let $N\in\mathbb{N}$ and denote by  ${\lceil N/2 \rceil }$ the least integer that is greater than or equal to $N/2$. It follows from Lemma~\ref{lem:boud} that $\nu  t_k\left\|v_k\right\|^{2}e\preceq F(p_k)-F( p_{k+1})$, for all $ k=0, 1, \ldots$.  Thus, by summing both sides of this inequality  for $k={\lceil N/2 \rceil },  \ldots, N$ and  using \eqref{def:tmin}, we obtain 
$$
\nu {\xi}\sum_{k={\lceil N/2 \rceil }}^{N}\left\|v_k\right\|^{2}\leq  f_i(p_{\lceil N/2 \rceil })-f_i(p_{N+1}), \qquad \forall ~i\in {\cal I} .
$$
Hence, taking non-negative numbers $\lambda_1,\ldots,\lambda_m$ as in the Proposition~\ref{prop:convcomb} and considering that  $q\in {\cal A}$,  we conclude  from the last inequality  that
$$
\nu {\xi}\sum_{k=\lceil N/2\rceil }^{N}\left\|v_k\right\|^{2} \leq \sum_{i=1}^m\lambda_i \left( f_i(p_{\lceil N/2\rceil })- f_i(p_{N+1})  \right)\leq \sum_{i=1}^m\lambda_i \left( f_i(p_{\lceil N/2\rceil })- f_i(q)  \right).
$$
Thus, from   Proposition~\ref{prop:convcomb}  and considering that $N/2\leq  \lceil N/2\rceil$ it follows that 
$$
\sum_{k=\lceil N/2\rceil }^{N}\left\|v_k\right\|^{2}\leq \frac{d^2(p_0,q) +  {\cal K}_{\rho,\kappa}^q \rho}{2\nu \xi^2 {\lceil N/2\rceil }}\leq \frac{d^2(p_0,q) +  {\cal K}_{\rho,\kappa}^q \rho}{\nu {\xi}^2N}.
$$
Therefore, $\min\{\|v_k\|^2:~ k=\lceil N/2 \rceil , \ldots, N\}\leq 2(d^2(p_0,q) +  {\cal K}_{\rho,\kappa}^q \rho)/(\nu {\xi}^2 N^2)$, which implies the  desired inequality.  
\end{proof}
\section{Examples}\label{Sec:ExamplesRn}
In this section we present some examples to illustrate the results obtained in previous sections. In particular, we  will  present some  examples  of  convex vectorial functions such that its  Riemannian Jacobian  is componentwise Lipschitz continuous.  
\begin{example} \label{ex:matrix1}
Let  ${\mathbb P}^n_{++}$ be the  cone of symmetric positive definite matrices. Define the vectorial function $F(X)=\left(f_1(X), \ldots, f_m(X)\right)$, where $f_i:{\mathbb P}^n_{++}\longrightarrow\mathbb{R}$ is given by 
\begin{equation}\label{eq:exmatrix}
f_i(X)= a_i\ln\left(\det(X)^{b_i} + c_i \right) - d_i\ln\left(\det(X)\right), 
\end{equation}
$a_i,b_i, c_i, d_i\in{\mathbb{R}_{++}}$ with $d_i<a_ib_i$ for all $i=1,\ldots,m$. Endowing ${\mathbb P}^n_{++}$ with the Riemannian metric given by  
$$
\langle U,V \rangle:= \mbox{tr} (VX^{-1}UX^{-1}),\qquad X\in {\mathbb P}^n_{++}, \qquad U,V\in
T_X{\mathbb P}^n_{++},
$$
where $\mbox{tr}(X)$ denotes the trace of  $X\in {\mathbb P}^n$, we obtain a Riemannian manifolds $\mathcal{M}:= ({\mathbb P}^n_{++}, \langle \cdot , \cdot \rangle)$with nonpositive sectional curvature, see \cite[Theorem 1.2. p. 325]{Lang1999}. In ${\cal M}$,  $f_i$ is convex and has Lipschitz gradient with constant $L_{i}\leq a_ib_i^2n$,  for each $i=1,\ldots,m$, see \cite[example 4.5]{FerreiraLouzeiroPrudente2018}. Hence,  from Definition~\ref{Def:GradLips} the Jacobian  $\nabla F$ is componentwise  Lipschitz continuous with constant $L\leq n\max\{a_1b_1^2, \ldots, a_mb_m^2 \}$.  In  $\mathcal{M}$,  the {\it exponential mapping   } $\exp_X:T_X\mathcal{M}\to \mathcal{M}$,  is given by
\begin{equation}\label{eq:expM}
\exp_X(V)=X^{1/2}e^{X^{-1/2}VX^{-1/2}}X^{1/2}, \qquad \quad V\in {\mathbb P}^n,\quad  X\in {\mathbb P}^n_{++}.
\end{equation} 
Therefore, from Corollary~\ref{res:fconv100} we can apply Algorithm~\ref{sec:gradient} with Strategies~\ref{fixed.step}, \ref{adaptive.step} or  \ref{armijo.step} to find  weak  optimal  Pareto  of $F$. 
\end{example}

In the following we present, without giving the details, one more example of convex vectorial function with Lipschitz gradients in the Riemannian manifolds $\mathcal{M}:= ({\mathbb P}^n_{++}, \langle \cdot , \cdot \rangle)$.
\begin{example} \label{ex:matrix2}
Let  $F(X)=\left(f_1(X), \ldots, f_m(X)\right)$ be a vectorial function, where  $f_i:{\mathbb P}^n_{++}\to\mathbb{R}$ is defined by 
$$
f_i(X)= a_i\ln(\det(X))^2-b_i\ln\left(\det(X)\right), 
$$
$a_i,b_i\in{\mathbb{R}_{++}}$ for all $i=1,\ldots,m$.  In $\mathcal{M}:= ({\mathbb P}^n_{++}, \langle \cdot , \cdot \rangle)$,  $f_i$ is convex and has Lipschitz gradient with constant $L_{i}\leq 2a_i\sqrt{n}$,  for each $i=1,\ldots,m$, \cite[example 4.4]{FerreiraLouzeiroPrudente2018}. The Jacobian  $\nabla F$ is componentwise  Lipschitz continuous with constant $L\leq 2\sqrt{n}\max\{a_1, \ldots, a_m \}$. 
\end{example}
Now,   we present  some preliminaries results to study examples of  convex vectorial functions with componentwise Lipschitz continuous Riemannian Jacobians.   We begin  with a result that, with some adjustments  in the notation, can be found in  \cite[Lemma~2]{LinHeZhang2013}.
\begin{lemma}\label{lem:iso.cam.par}
Let  ${\bar{\mathcal M}}$ and ${\mathcal M}$ be Riemannian manifold,  ${\bar{\nabla}}$ be the Levi-Civita connection associated to ${\bar{\mathcal M}}$ and $\varphi: {\bar{\mathcal M}}\rightarrow {\mathcal M}$ be  an isometry. Then,    ${\nabla}:{\cal X}(\mathcal{M})\times{\cal X}(\mathcal{M})\rightarrow {\cal X}(\mathcal{M})$ defined by
\begin{equation} \label{eq:NewConection}
\nabla_{V}U:= \mbox{d}\varphi(\bar{\nabla}_{\bar V}{\bar U}), \qquad \,\forall~ { V}, {U}\in {\cal X}(\mathcal{M}).
\end{equation}
is the   Levi-Civita connection associated to ${\bar{\mathcal M}}$, where   ${\bar V}=\mbox{d}\varphi^{-1}{V}$ and ${\bar U}=\mbox{d}\varphi^{-1}{U}$.
\end{lemma}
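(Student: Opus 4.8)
The plan is to invoke the fundamental theorem of Riemannian geometry: the Levi-Civita connection of $\mathcal{M}$ is the \emph{unique} affine connection on $\mathcal{M}$ that is simultaneously torsion-free and compatible with the Riemannian metric $\langle\cdot,\cdot\rangle$ of $\mathcal{M}$. Hence it suffices to show that the operator $\nabla$ defined by \eqref{eq:NewConection} is (i) a well-defined affine connection on ${\cal X}(\mathcal{M})$, (ii) symmetric (torsion-free), and (iii) metric-compatible; uniqueness then forces $\nabla$ to coincide with the Levi-Civita connection of $\mathcal{M}$.

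Before verifying (i)--(iii), I would record a few elementary consequences of $\varphi$ being an isometry (in particular a diffeomorphism). Writing $\bar U = \mbox{d}\varphi^{-1}U$ for the $\varphi$-related pullback of $U\in{\cal X}(\mathcal{M})$, one has, for every $f\in C^{\infty}(\mathcal{M})$: first, $\overline{fU}=(f\circ\varphi)\bar U$ together with $\mbox{d}\varphi((f\circ\varphi)W)=f\,\mbox{d}\varphi(W)$; second, the chain rule $\bar V(f\circ\varphi)=(Vf)\circ\varphi$; third, the naturality of the Lie bracket under $\varphi$-related fields, $\mbox{d}\varphi([\bar V,\bar U])=[V,U]$; and fourth, the isometry identity $\langle\bar U,\bar W\rangle=\langle U,W\rangle\circ\varphi$. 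Each of these follows directly from the definitions and from the fact that $\mbox{d}\varphi_p$ is a linear isometry for every $p$.

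With these at hand, the verification is routine. $\mathbb{R}$-bilinearity of $\nabla$ is immediate; $C^{\infty}$-linearity in the first slot uses the first fact, giving $\nabla_{fV}U=f\nabla_V U$, and the Leibniz rule in the second slot follows by combining the first two facts, since $\mbox{d}\varphi\bigl(\bar\nabla_{\bar V}((f\circ\varphi)\bar U)\bigr)=(Vf)U+f\nabla_V U$. For symmetry I would compute $\nabla_V U-\nabla_U V=\mbox{d}\varphi(\bar\nabla_{\bar V}\bar U-\bar\nabla_{\bar U}\bar V)=\mbox{d}\varphi([\bar V,\bar U])=[V,U]$, using that $\bar\nabla$ is torsion-free together with the third fact. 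For metric compatibility I would push the compatibility of $\bar\nabla$ through $\varphi$: applying $\bar V$ to $\langle\bar U,\bar W\rangle=\langle U,W\rangle\circ\varphi$ and using the chain rule gives $(V\langle U,W\rangle)\circ\varphi=\langle\bar\nabla_{\bar V}\bar U,\bar W\rangle+\langle\bar U,\bar\nabla_{\bar V}\bar W\rangle$; the isometry identity then rewrites the right-hand side as $\bigl(\langle\nabla_V U,W\rangle+\langle U,\nabla_V W\rangle\bigr)\circ\varphi$, and since $\varphi$ is a bijection, cancelling the precomposition with $\varphi$ yields the compatibility of $\nabla$ with the metric of $\mathcal{M}$.

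Since every computation is elementary, there is no single hard obstacle; the one point deserving care is the bookkeeping of $\varphi$-relatedness, namely tracking which objects live on $\bar{\mathcal{M}}$ versus $\mathcal{M}$ and precomposing with $\varphi$ at the correct places. It is worth emphasizing that the symmetry step uses only that $\varphi$ is a diffeomorphism, whereas the isometry hypothesis enters decisively, and only, in the metric-compatibility step through the identity $\langle\bar U,\bar W\rangle=\langle U,W\rangle\circ\varphi$.
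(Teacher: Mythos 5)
Your proposal is correct and follows essentially the same route as the paper: the paper's proof also reduces the claim to checking that the operator in \eqref{eq:NewConection} satisfies the defining axioms of the Levi-Civita connection (it cites equations (1.9)--(1.12) of Sakai, i.e.\ the affine-connection rules, torsion-freeness and metric compatibility) and then appeals to uniqueness. The only difference is one of detail: the paper merely asserts that these axioms can be verified, whereas you actually carry out the verification via the $\varphi$-relatedness identities, which is a welcome elaboration rather than a deviation.
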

\begin{proof}
Let $f$ be   continuously differentiable, ${V}$ and  ${U}$ be vector fields in ${\mathcal M}$. 
Since $\varphi$ is a diffeomorphism,  ${f}\circ \varphi$ is  continuously differentiable,   ${\bar V}=\mbox{d}\varphi^{-1}{V}$ and ${\bar U}=\mbox{d}\varphi^{-1}{U}$  are vector fields in ${\bar{\mathcal M}}$. Thus, we can prove that \eqref{eq:NewConection} satisfies  \cite[equations (1.9), (1.10), (1.11) and (1.12) on page 27 and 28]{Sakai1996}  and therefore is the   Levi-Civita connection associated to $\mathcal{M}$.
\end{proof}
The next result is the main tool used in the following examples. 
\begin{theorem}\label{The:iso.conv.fun.a}
Let ${\mathcal M}$ and ${\bar{\mathcal M}}$ be Riemannian manifolds, $f:{\mathcal M}\rightarrow {\mathbb R}$ be a twice-differentiable function and $\varphi:{\bar{\mathcal M}}\rightarrow {\mathcal M}$  be an isometry. Then,   $f$ has gradient vector field Lipschitz continuous with constant $L\geq 0$ if, and only if,   $g:{\bar{\mathcal M}}\rightarrow {\mathbb R}$ defined by $g :=f\circ\varphi$, has gradient vector field Lipschitz continuous with constant $L\geq 0$.
\end{theorem}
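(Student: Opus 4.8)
The plan is to reduce the statement to an equality of Hessian operator norms by means of the characterization in Lemma~\ref{le:CharactGL}, and then to verify that the isometry $\varphi$ preserves these norms pointwise. By Lemma~\ref{le:CharactGL}, $f$ has $L$-Lipschitz gradient precisely when $\|\mbox{hess}\,f(p)\|\le L$ for every $p\in\mathcal{M}$, and likewise for $g$ on $\bar{\mathcal{M}}$. Since $\varphi$ is an isometry it is in particular a diffeomorphism, so $p=\varphi(\bar{p})$ ranges over all of $\mathcal{M}$ as $\bar{p}$ ranges over $\bar{\mathcal{M}}$. Hence it suffices to establish the pointwise identity $\|\mbox{hess}\,g(\bar{p})\|=\|\mbox{hess}\,f(\varphi(\bar{p}))\|$ for all $\bar{p}\in\bar{\mathcal{M}}$; taking suprema over the two manifolds then yields the claimed equivalence of Lipschitz constants.

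First I would compute how the gradient transforms. Using the chain rule $\mbox{d}g(\bar{p})=\mbox{d}f(\varphi(\bar{p}))\circ\mbox{d}\varphi(\bar{p})$ together with the defining relation of the gradient and the fact that $\mbox{d}\varphi$ is a linear isometry, one obtains
\begin{equation*}
\grad g(\bar{p}) = \mbox{d}\varphi^{-1}\big(\grad f(\varphi(\bar{p}))\big),
\end{equation*}
that is, $\grad f$ pulls back to $\grad g$ in exactly the sense $\overline{U}=\mbox{d}\varphi^{-1}U$ appearing in Lemma~\ref{lem:iso.cam.par}. Next I would transfer this to the Hessian. Writing $\mbox{hess}\,f(p)V=\nabla_V\grad f$ and applying Lemma~\ref{lem:iso.cam.par} with $U=\grad f$, so that $\overline{U}=\grad g$, the connection identity $\nabla_V U=\mbox{d}\varphi(\bar{\nabla}_{\bar{V}}\overline{U})$ gives
\begin{equation*}
\mbox{hess}\,f(\varphi(\bar{p}))\,V = \mbox{d}\varphi\big(\bar{\nabla}_{\bar{V}}\grad g\big) = \mbox{d}\varphi\big(\mbox{hess}\,g(\bar{p})\,\bar{V}\big), \qquad V=\mbox{d}\varphi\,\bar{V}.
\end{equation*}
Solving for the Hessian of $g$ yields $\mbox{hess}\,g(\bar{p})\bar{V}=\mbox{d}\varphi^{-1}\big(\mbox{hess}\,f(\varphi(\bar{p}))\,\mbox{d}\varphi\,\bar{V}\big)$.

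Since both $\mbox{d}\varphi$ and $\mbox{d}\varphi^{-1}$ are linear isometries, taking norms gives $\|\mbox{hess}\,g(\bar{p})\bar{V}\|=\|\mbox{hess}\,f(\varphi(\bar{p}))\,\mbox{d}\varphi\,\bar{V}\|$ and $\|\bar{V}\|=\|\mbox{d}\varphi\,\bar{V}\|$. As $\bar{V}$ runs over the unit sphere of $T_{\bar{p}}\bar{\mathcal{M}}$, the image $\mbox{d}\varphi\,\bar{V}$ runs bijectively over the unit sphere of $T_{\varphi(\bar{p})}\mathcal{M}$, so the two operator norms coincide, which is the desired pointwise identity.

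The only genuine subtlety — and the step I would write out most carefully — is the passage from the gradient to the Hessian: invoking Lemma~\ref{lem:iso.cam.par} correctly hinges on recognizing that the $\varphi$-related field of $\grad f$ is precisely $\grad g$, which is exactly what the first step supplies. Everything else (the chain rule, the isometry property of $\mbox{d}\varphi$, the surjectivity of $\varphi$, and the final application of Lemma~\ref{le:CharactGL} on both manifolds) is routine. I would also remark that the twice-differentiability of $f$, hence of $g$ because $\varphi$ is smooth, is exactly what makes Lemma~\ref{le:CharactGL} applicable on both sides.
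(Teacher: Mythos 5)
Your proposal is correct and follows essentially the same route as the paper's proof: both derive $\grad f(\varphi(q))=\mbox{d}\varphi(q)\grad g(q)$ from the chain rule and the isometry, then use Lemma~\ref{lem:iso.cam.par} to obtain the conjugation identity $\mbox{hess}\,f(\varphi(q))\,\mbox{d}\varphi(q)=\mbox{d}\varphi(q)\,\mbox{hess}\,g(q)$, conclude equality of the operator norms, and finish with Lemma~\ref{le:CharactGL}. No gaps; your explicit remark that the $\varphi$-related field of $\grad f$ is $\grad g$ is precisely the point the paper's argument also hinges on.
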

\begin{proof}
Let  ${\bar V}\in {\cal X}(\bar {\mathcal{M}})$ and set  ${V}(\varphi(q))=\mbox{d}\varphi(q) {\bar V}(q)$. Thus,   by using   the definition of the gradient vector field and the chain rule, we have
\begin{align*}
\left\langle \grad g(q),{\bar V}(q)\right\rangle&=\mbox{d}g(q){\bar V}(q)\\
                                                             &=\mbox{d}f (\varphi(q))\mbox{d}\varphi(q){\bar V}(q)\\
                                                             &=\mbox{d}f (\varphi(q)) {V}(\varphi(q)) \\
                                                             &=\left\langle \grad f (\varphi(q)), {V}(\varphi(q))\right\rangle.
\end{align*}
Taking into account  that  $\varphi$ is an isometry and  ${V}(\varphi(q))=\mbox{d}\varphi(q) {\bar V}(q)$, we obtain that 
\begin{align*}
\left\langle  \grad f (\varphi(q)),  {V}(\varphi(q))\right\rangle&= \left\langle  \grad f (\varphi(q)),  \mbox{d}\varphi(q) {\bar V}(q)\right\rangle\\
                                                                                                    &= \left\langle   \mbox{d}\varphi(q) \mbox{d}\varphi(q)^{-1}  \grad f (\varphi(q)),  \mbox{d}\varphi(q) {\bar V}(q)\right\rangle\\
                                                                                                    &= \left\langle  \mbox{d}\varphi(q)^{-1}  \grad f (\varphi(p)),   {\bar V}(q)\right\rangle.
\end{align*}
Hence, combining the two above equality   we conclude that  $\grad f (\varphi(q))=\mbox{d}\varphi(q) \grad g(q)$. Moreover,  the definition of the hessian of $f$ together with Lemma~\ref{lem:iso.cam.par} yield
\begin{align*}
\mbox{hess}\, f(\varphi(q))\mbox{d}\varphi(q) {\bar V}(q)&=\mbox{hess}\, f(\varphi(q)){V}((\varphi(q))\\
                                                                         &=\nabla_{{V}(\varphi(q))}\grad f(\varphi(q))\\
                                                                         &= \mbox{d}\varphi(p)\left( {\bar{\nabla}}_{{\bar V}(q)}\grad g(q)\right)\\
                                                                         &= \mbox{d}\varphi(q) \mbox{hess} \,g(q){\bar V}(q), 
\end{align*}
which implies that   $\mbox{hess}\, f(\varphi(q))\mbox{d}\varphi(q) = \mbox{d}\varphi(q) \mbox{hess} \,g(q)$. Then, using again that $\varphi$ is an isometry,  we have $\|\mbox{hess}\, f(\varphi(q))\|=\|\mbox{hess}\,g (q)\|.$ Therefore, by using   Lemma~\ref{le:CharactGL} the results follows.
\end{proof}
The next result is an important property of isometries, its prove is in \cite[Proposition 5.6.1, p. 196]{Petersen2016}.
\begin{proposition}\label{prop:geod.iso} 
Let ${\mathcal M}$ and ${\bar{\mathcal M}}$ be  complete Riemannian manifolds. If $\varphi:{\bar{\mathcal M}} \rightarrow {\mathcal M}$ is a isometry and $\gamma$ is a geodesic in ${\bar{\mathcal M}}$, then $\varphi\circ\gamma$ is a geodesic in ${\mathcal M}$.
\end{proposition}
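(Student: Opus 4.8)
The plan is to characterize geodesics intrinsically through the connection equation $\nabla_{\gamma'}\gamma'=0$ and to exploit the fact, already established in Lemma~\ref{lem:iso.cam.par}, that the differential of an isometry intertwines the two Levi-Civita connections. First I would set $\sigma:=\varphi\circ\gamma$ and differentiate by the chain rule to obtain $\sigma'(t)=\mbox{d}\varphi(\gamma(t))\,\gamma'(t)$; thus the velocity field of the image curve is precisely the $\mbox{d}\varphi$-image of the velocity field of $\gamma$. The goal then reduces to verifying that $\sigma$ satisfies $\nabla_{\sigma'}\sigma'=0$ in $\mathcal M$, knowing that $\gamma$ satisfies $\bar\nabla_{\gamma'}\gamma'=0$ in $\bar{\mathcal M}$.

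The key step is to promote the pointwise connection identity of Lemma~\ref{lem:iso.cam.par} to the covariant derivative along a curve. Concretely, for any smooth vector field $W$ along $\gamma$ I claim that the covariant derivative $D/dt$ induced by $\nabla$ along $\sigma$ and the covariant derivative $\bar D/dt$ induced by $\bar\nabla$ along $\gamma$ satisfy
\[
\frac{D}{dt}\big(\mbox{d}\varphi\,W\big)=\mbox{d}\varphi\Big(\frac{\bar D}{dt}W\Big).
\]
To prove this I would fix $t_0$ and locally extend $\gamma'$ and $W$ to vector fields $\bar U,\bar V\in{\cal X}(\bar{\mathcal M})$ on a neighborhood of $\gamma(t_0)$ with $\bar U=\gamma'$ and $\bar V=W$ along $\gamma$; since $\varphi$ is a diffeomorphism, their pushforwards $U:=\mbox{d}\varphi\,\bar U$ and $V:=\mbox{d}\varphi\,\bar V$ are well-defined vector fields in $\mathcal M$ extending $\sigma'$ and $\mbox{d}\varphi\,W$ along $\sigma$. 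Because the covariant derivative along a curve depends only on the velocity of the curve and the restriction of the field to the curve, it coincides at $t_0$ with $\bar\nabla_{\bar U}\bar V$ (resp.\ $\nabla_U V$), and Lemma~\ref{lem:iso.cam.par} gives exactly $\nabla_U V=\mbox{d}\varphi(\bar\nabla_{\bar U}\bar V)$, which is the claimed identity.

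With this intertwining in hand the conclusion is immediate: applying it to $W=\gamma'$ and using that $\gamma$ is a geodesic, so that $\bar\nabla_{\gamma'}\gamma'=0$, I obtain
\[
\nabla_{\sigma'}\sigma'=\frac{D}{dt}\sigma'=\frac{D}{dt}\big(\mbox{d}\varphi\,\gamma'\big)=\mbox{d}\varphi\Big(\frac{\bar D}{dt}\gamma'\Big)=\mbox{d}\varphi(0)=0.
\]
Hence $\sigma=\varphi\circ\gamma$ solves the geodesic equation in $\mathcal M$ and is therefore a geodesic.

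I expect the main obstacle to be the local-extension argument in the second paragraph, namely making rigorous that $\mbox{d}\varphi$ commutes with covariant differentiation \emph{along the curve} rather than merely with $\nabla$ on globally defined vector fields: one must check that the pushforwards $U,V$ are genuine vector fields (which uses that $\varphi$ is a diffeomorphism) and that the covariant derivative along $\gamma$ is independent of the chosen local extension, so that Lemma~\ref{lem:iso.cam.par} may be applied pointwise. An alternative route would avoid the connection entirely and argue variationally: an isometry preserves lengths and constant-speed parametrizations, hence maps locally length-minimizing constant-speed curves to curves with the same property; this, however, requires separately invoking the local-minimization characterization of geodesics and controlling the parametrization, whereas the connection-based argument uses precisely the lemma already proved and is cleaner.
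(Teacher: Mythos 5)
Your argument is correct. The paper itself gives no proof of Proposition~\ref{prop:geod.iso} --- it simply cites Petersen, Proposition~5.6.1 --- so what you have written is a self-contained replacement for that citation, and it is the standard one: the naturality of the Levi-Civita connection under isometries (which is exactly Lemma~\ref{lem:iso.cam.par}) is upgraded to the statement that $\mbox{d}\varphi$ intertwines covariant differentiation along $\gamma$ and along $\varphi\circ\gamma$, and the geodesic equation is then transported directly. Your identification of the delicate point is also the right one: the local-extension step implicitly requires $\gamma$ to be an immersion near $t_0$ (equivalently $\gamma'(t_0)\neq 0$), which holds for every non-constant geodesic since geodesics have constant speed; the constant case is trivial, so nothing is lost. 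The only cosmetic remark is that Lemma~\ref{lem:iso.cam.par} as printed in the paper contains a typo (it asserts that the pushed-forward connection is the Levi-Civita connection ``associated to $\bar{\mathcal M}$'' when it means ${\mathcal M}$); you have used it in the intended, correct form. Your alternative variational sketch (isometries preserve length and constant-speed parametrization, hence locally minimizing curves) would also work but, as you note, requires invoking the local length-minimization characterization of geodesics, so the connection-based route is preferable here since it reuses a lemma the paper has already established.
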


The following result is a straight consequence of the definition of isometry and Proposition~\ref{prop:geod.iso}.
\begin{theorem}\label{prop:iso.conv.fun}
Let ${\mathcal M}$, ${\bar{\mathcal M}}$ be Riemannian manifold and $\varphi:{\bar{\mathcal M}} \rightarrow {\mathcal M}$  an isometry. The function $g:{\mathcal M}\rightarrow {\mathbb R}$  is convex if and only if $f:{\bar{\mathcal M}} \rightarrow {\mathbb R}$, defined by
$f(p)=(g\circ\varphi)(p)$, is convex.
\end{theorem}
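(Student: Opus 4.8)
The plan is to exploit the fact that convexity is defined purely through the inequality $h\circ\gamma(t)\le (1-t)\,h(\gamma(0))+t\,h(\gamma(1))$ along geodesic segments, combined with the observation that $\varphi$ induces a bijection between the geodesic segments of $\bar{\mathcal M}$ and those of $\mathcal M$. Since $\varphi$ is an isometry, it is in particular a diffeomorphism whose inverse $\varphi^{-1}$ is again an isometry; hence both $\varphi$ and $\varphi^{-1}$ carry geodesics to geodesics by Proposition~\ref{prop:geod.iso}, and this is the only structural input the proof really needs.

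First I would prove the implication \emph{$g$ convex $\Rightarrow$ $f$ convex}. Fix arbitrary $\bar p,\bar q\in\bar{\mathcal M}$ and a geodesic segment $\bar\gamma\in\Gamma_{\bar p\bar q}$. By Proposition~\ref{prop:geod.iso} the curve $\gamma:=\varphi\circ\bar\gamma$ is a geodesic segment in $\mathcal M$, and it joins $p:=\varphi(\bar p)$ to $q:=\varphi(\bar q)$, so $\gamma\in\Gamma_{pq}$. Convexity of $g$ along $\gamma$ gives $g(\gamma(t))\le (1-t)g(p)+t\,g(q)$ for all $t\in[0,1]$. Since $f=g\circ\varphi$, we have $f(\bar\gamma(t))=g(\gamma(t))$, $f(\bar p)=g(p)$ and $f(\bar q)=g(q)$; substituting yields $f(\bar\gamma(t))\le (1-t)f(\bar p)+t\,f(\bar q)$. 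As $\bar p,\bar q$ and $\bar\gamma$ were arbitrary, $f$ is convex.

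For the converse I would argue symmetrically, replacing $\varphi$ by $\varphi^{-1}$: given $p,q\in\mathcal M$ and $\gamma\in\Gamma_{pq}$, the curve $\bar\gamma:=\varphi^{-1}\circ\gamma$ is a geodesic segment in $\bar{\mathcal M}$ joining $\varphi^{-1}(p)$ to $\varphi^{-1}(q)$; I would then apply convexity of $f$ along $\bar\gamma$ and translate the resulting inequality back through $g=f\circ\varphi^{-1}$ to recover convexity of $g$ along $\gamma$.

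The argument is essentially routine once the geodesic correspondence is in place; the only point requiring care is verifying that this correspondence is a genuine bijection between the families $\Gamma_{\bar p\bar q}$ and $\Gamma_{\varphi(\bar p)\varphi(\bar q)}$ --- that is, that \emph{every} geodesic segment in $\mathcal M$ joining a prescribed pair of points is the $\varphi$-image of one in $\bar{\mathcal M}$, so that the universal quantifier over $\Gamma_{pq}$ in the definition of convexity is fully matched in both directions. This is exactly where the fact that $\varphi^{-1}$ is also an isometry (so that Proposition~\ref{prop:geod.iso} applies both ways) is used. No curvature hypothesis and no minimality of the geodesics enters, since $\Gamma_{pq}$ comprises all geodesic segments, not only the minimizing ones.
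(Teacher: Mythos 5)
Your proposal is correct and matches the paper's intended argument: the paper states this theorem without a written proof, calling it ``a straight consequence of the definition of isometry and Proposition~\ref{prop:geod.iso},'' which is precisely the geodesic-correspondence argument you spell out. Your extra care about the bijection between $\Gamma_{\bar p\bar q}$ and $\Gamma_{\varphi(\bar p)\varphi(\bar q)}$ (via $\varphi^{-1}$ also being an isometry) is exactly the point that makes the omitted proof go through in both directions.
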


In the next example we change  the metric of the Euclidean space $\mathbb{R}^n$ to prove, in particular,  that  the  {\it extended  Rosenbrock's banana function} is convex and has gradiente Lipschitz in $\mathbb{R}^n$ with this new metric.    It is worth to pointed out that the  convexity of this function in two dimension  has been established in \cite[p. 83]{UdristeLivro1994}. 
\begin{example}[Rosenbrock's banana function class]\label{ex:fun.rosen.r2}
Let  $f_j: \mathbb{R}^{2n} \to \mathbb{R}$ be  a variant of the Rosenbrock's banana function, defined by
\begin{equation}\label{eq:ros}
 f_j(x_1,\ldots, x_{2n}):=\sum_{i=1}^{n}a_{ij}\left(x_{2i-1}^2-x_{2i}\right)^2 + \left(x_{2i-1}-b_{ij}\right)^2,  \quad a_{ij}\in\mathbb{R}_{++}, \quad b_{ij}\in\mathbb{R},
\end{equation}
for $j=1, \ldots, m$. Denote    ${\bar{\mathcal M}}$ as   the Euclidean space $\mathbb{R}^{2n}$ with the usual metric.  It is  well known  that  $f_j$  is non-convex and its  gradient  is non-Lipschitz  continuous  in ${\bar {\mathcal M}}$. Endowing $\mathbb{R}^{2n}$ with the  new Riemannian metric $\langle u,v \rangle:=u^TG(x)v$, where $u, v\in \mathbb{R}^{2n}$ and $G(x)$ is the $2n\times 2n$ block diagonal matrix $G(x)=\diag(G_{1}(x),\ldots, G_{n}(x))$, where the  blocks  are given by
\begin{equation*}
G_i(x):=
\begin{pmatrix}
 1+4x_{2i-1}^2 & -2x_{2i-1} \\
 -2x_{2i-1} & 1
\end{pmatrix},\qquad i=1,\ldots,n,
\end{equation*}
and $x=(x_1,\ldots, x_{2n})$,  we obtain a Riemannian manifold ${\mathcal M}:= (\mathbb{R}^{2n},G)$. Taking into account that the function $\varphi: {\bar{\mathcal M}} \to {\mathcal M}$ defined by 
$$
\varphi(z_1,\ldots,z_{2n})= \left(z_1,z_1^2-z_2, \ldots, z_{2n-1},z_{2n-1}^2-z_{2n}\right), 
$$
 is an isometry,  the Riemannian manifolds  ${\mathcal M}$ is complete and has constant seccional curvature $K=0$.    On the other hand,  $g_j:  {\bar{\mathcal M}} \to \mathbb{R}$ defined by 
\begin{equation*}
g_j(z_1,\ldots,z_{2n}):=(f_j\circ\varphi)(z_1,\ldots,z_{2n})= \sum_{i=1}^{n}a_{ij}z_{2i}^2+(z_{2i-1}-b_{ij})^2,  \qquad  j=1, \ldots, m, 
\end{equation*}
is a quadratics function, which is convex with  gradient vector field Lipschitz in $ {\bar{\mathcal M}}$ with constant $L_j:=\max\{2,2a_{1j},\ldots,2a_{nj}\}$. Therefore,  Theorem~\ref{prop:iso.conv.fun} and  Theorem~\ref{The:iso.conv.fun.a}  imply, respectively,  that $f_j$ is also convex and   has  gradient vector field Lipschitz continuous, with constant $L_j$, in ${\mathcal M}$.    Let $F=\left(f_1, \ldots, f_m\right)$ be the Rosenbrock's banana  vectorial function. Hence,  $F$ is convex and Definition~\ref{Def:GradLips} implies that $\nabla F$ is componentwise  Lipschitz continuous with constant $L=\max\{2, 2a_{11}, \ldots 2a_{nm}\}$.  The gradient of $f_j$ is given by  $\grad f_j(x)=G(x)^{-1}f'_j(x)$, where $f'_j$ is the usual gradient of $f_j$.  Given  $z\in{\bar{\mathcal M}}$  the exponential map in  ${\bar{\mathcal M}}$,   ${\overline \exp}_z:T_z{\bar{\mathcal M}}\rightarrow{\bar{\mathcal M}}$,  is given by ${\overline \exp}_z({\bar v})=z+{\bar v}$.  Since $\varphi$ is an isometry,  Proposition~\ref{prop:geod.iso} implies  that  the exponential map in  ${\mathcal M}$, $\exp_x:T_x{\mathcal M}\to {\mathcal M}$,  is given by
$
\exp_{x}(v)=\varphi(\varphi^{-1}(x)+{\mbox{d}\varphi}^{-1}(x)v).
$
Thus,  due to   $\varphi^{-1}(x)=(x_1,x_1^2-x_2, \ldots,  x_{2n-1},x_{2n-1}^2-x_{2n})$ and   ${\mbox{d}\varphi}^{-1}(x)v= (v_1,2x_1v_1-v_2, \ldots,  v_{2n-1},2x_{2n-1}v_{2n-1}-~v_{2n})$, we obtain that 
$$
\exp_{x}(v)= \left(x_1+v_1,v_1^2+x_2+v_2,  \dots,p_{2n-1}+v_{2n-1},v_{2n-1}^2+x_{2n}+v_{2n} \right)\,
$$
where $x:= (x_1,\ldots,  x_{2n})$ and  $v:= (v_1,\ldots,  v_{2n})$.  
\end{example}

We end this section by presenting, in particular, a family of vectorial functions in positive orthant $\mathbb{R}_{+}^n$  that are not convex and their gradients are not componentwise Lipschitz continuous. However,    by a suitable  change of the metric of $\mathbb{R}_{+}^n$ the functions of that family   are convex and  have componentwise Lipschitz continuous gradients on this new Riemannian manifold.

\begin{example}\label{ex:fun.product}
Let $f_j: \mathbb{R}^n_{++} \to \mathbb{R}$ be defined by
\begin{equation} \label{eq:psy}
  f_j(x):=a_j\ln\left(\prod_{i=1}^{n}x_{i}^{u_{ij}}+b_i\right) -  \sum_{i=1}^{n}w_{ij}\ln(x_i) +  c_j \sum_{i=1}^{n}\ln^2(x_i), 
\end{equation}
where $x:=(x_1,\ldots,  x_n)\in  \mathbb{R}^n_{++}$, $u_j:=(u_{1j}, \ldots, u_{nj})^T\in\mathbb{R}_{+}^n$, $w_j:=(w_{1j}, \ldots, w_{nj})^T\in\mathbb{R}_{+}^n$ and $a_j, b_j, c_j\in\mathbb{R}_{++}$, for all $j=1, \ldots,m$. Denote    ${\bar{\mathcal M}}$ as   the Euclidean space $\mathbb{R}^n$ with the usual metric.  The   function $f$  is in general non-convex and its  gradient  is non-Lipschitz  in ${\bar {\mathcal M}}$. Endowing $\mathbb{R}^n_{++}$ with the new Riemannian metric   $\langle u,v \rangle:=u^TG(x)v$, where $u, v\in T_x{\mathcal M}$  and   $G(x)$ is  the  $n\times n$  diagonal matrix 
$$
G(x):=\diag\left(x_{1}^{-2},x_{2}^{-2},\dots,x_{n}^{-2}\right), 
$$
we obtain the Riemannian manifold ${\mathcal M}:=(\mathbb{R}_{++}^n,G)$.  Since $\varphi: {\bar{\mathcal M}} \to {\mathcal M}$ defined by 
\begin{equation} \label{eq:isrnp}
\varphi(z_1,\dots,z_n)=\left(e^{z_1},\dots ,  e^{z_{n}}\right), 
\end{equation}
 is an isometry,  then   ${\mathcal M}$ is complete and has constant seccional curvature $K=0$.   The function   $g_j:  {\bar{\mathcal M}} \to \mathbb{R}$ defined by 
\begin{equation*}
g_j(z):=(f_j\circ\varphi)(z)=  a_j\ln\left(e^{u_j^Tz}+b_j\right) - w_j^Tz +c_jz^Tz, \qquad \quad  z:=(z_1,\ldots, z_n)^T\in  {\bar{\mathcal M}} , 
\end{equation*}
is convex and its  gradient  is Lipschitz in $ {\bar{\mathcal M}}$ with constant $L_j\leq a_ju_j^Tu_j/b_j + 2c_j$.  Thus,  Theorem~\ref{prop:iso.conv.fun} and  Theorem~\ref{The:iso.conv.fun.a}  imply, respectively,  that $f_j$ is also convex and   has  gradient Lipschitz  in ${\mathcal M}$  with constant $L_j$.  Therefore,  the  vectorial function $F(x)=\left(f_1(x), \ldots, f_m(x)\right)$ is convex and Definition~\ref{Def:GradLips} implies that $\nabla F$ is componentwise  Lipschitz continuous with constant $L=\max\{L_{1}, \ldots, L_{m} \}$.  The    gradient  of $f_j$ is given by 
$$\grad f_j(x)= \diag (x)^{2}f_j'(x),    \qquad x\in{\mathcal M}$$
where $\diag (x):= \diag(x_{1},\ldots,x_{n})$ and $f_j'$ is the usual derivative.  Using the isometry \eqref{eq:isrnp}   Proposition~\ref{prop:geod.iso} implies that  the exponential map in  ${\mathcal M}$, $\exp_x:T_x{\mathcal M}\to {\mathcal M}$,  is given by
$
\exp_{x}(v)=\varphi(\varphi^{-1}(x)+{\mbox{d}\varphi}^{-1}(x)v).
$
Since $\varphi^{-1}(x)=\left(\ln{x_1},\dots ,  \ln{x_{n}}\right)$ and ${\mbox{d}\varphi}^{-1}(x)v= (x_1^{-1}v_1,\ldots, x_n^{-1}v_n)$,  where $v= (v_1,\ldots,  v_n)$, we have 
$$\exp_x(v) = \left(x_1e^{\frac{v_1}{x_1}},\ldots,x_ne^{\frac{v_n}{x_n}}\right), \qquad v:=(v_1,\ldots,v_n)\in T_{x}\mathcal{M}\equiv\mathbb{R}^n.$$
\end{example}

\section{Numerical experiments}\label{numerical}

In order to illustrate the applicability of our proposal, we implemented Algorithm~\ref{sec:gradient} with the Armijo-type stepsize and tested it in the functions of the examples in Section~\ref{Sec:ExamplesRn}. Without attempting to go into details, we mention that the Armijo-type line search sketched out in Strategy~\ref{armijo.step} was coded based on (quadratic) polynomial interpolations of the coordinate functions.  We refer the reader to \cite{lppwolfe} for a careful discussion about line search strategies for vector optimization problems. We set $\delta = 10^{-4}$, $t_{\min} = 10^{-2}$, $t_{\max} = 10^{2}$, $\omega_1 = 0.05$, and $\omega_2 = 0.95$. Given a Riemannian manifold $\mathcal{M}$, the steepest descent direction $v_p$ at a non-critical point $p\in \mathcal{M}$ as in \eqref{def:descent.field} can be calculated by solving for $\lambda \in \R$ and $u\in T_p\mathcal{M}$ the following differentiable problem
\begin{equation} \label{vproblem}
 \begin{array}{ll}
\mbox{Minimize}   & \ds \lambda + \frac{1}{2}\left\langle u,u \right\rangle         \\
\mbox{subject to} & \left\langle \grad f_i(p),u \right\rangle \leq \lambda, \quad i=1,\ldots,m,
\end{array}
\end{equation}
which is a convex quadratic problem with linear inequality constraints, see \cite{FliegeSvaiter2000}. In our implementation, for calculating $v_p$, we solve problem \eqref{vproblem} using Algencan~\cite{algencan}, an augmented Lagrangian code for general nonlinear programming.

We stopped the execution of the algorithm at $p_k$ declaring convergence if
$$\max_{i\in {\cal I}} \left\langle\grad f_i(p_k),v_k \right\rangle+ \frac{1}{2}\|v_k\|^{2} \geq - 5 \times \texttt{eps}^{1/2},$$
where ${\cal I}=\{1,\ldots,m\}$, and $\texttt{eps}$ denotes the machine precision given. In our experiments we used $\texttt{eps}=2^{-52}\approx 2.22 \times 10^{-16}$. We point out that this convergence criterion was proposed in the numerical tests of~\cite{mauriciobenar&fliege} and also used in \cite{FerreiraLouzeiroPrudente2018,PerezPrudente2018}. The maximum number of allowed iterations was set to 10000. Codes are written in double precision Fortran~90 and are freely available at \url{https://orizon.ime.ufg.br/}.

\subsection{Rosenbrock's Problem}

We start the numerical experiments by verifying the practical behavior of Algorithm~\ref{sec:gradient} in a small instance of the Rosenbrock's problem given by the functions in Example~\ref{ex:fun.rosen.r2}. We considered $n=1$, $m=2$ in \eqref{eq:ros}, and set  $F(x)=(f_1(x),f_2(x))$ where
\begin{equation}\label{eq:ros1}
f_1(x_1,x_2)=100\left(x_{1}^2-x_{2}\right)^2 + \left(x_{1}-1\right)^2,
\end{equation}
\begin{equation}\label{eq:ros2}
f_2(x_1,x_2)=100\left(x_{1}^2-x_{2}\right)^2 + \left(x_{1}-2\right)^2.
\end{equation}
Functions $f_1$ and  $f_2$ have global minimizers at $x^*=(1,1)$ and $\hat{x}=(2,4)$, respectively. Note that $f_1(x^*)=f_2(\hat{x})=0$ and $f_1(\hat{x})=f_2(x^*)=1$. Figure~\ref{fig:image}(a) shows a representation of the image set of $F(x)$ {\it around} the Pareto front, obtained by discretizing the square $[-5,5]\times[-5,5]$ by a fine grid and plotting all the image points. We run the algorithm 1000 times using starting points from a uniform random distribution belonging to $(-5,5)\times(-5,5)$. In all instances, the Algorithm~\ref{sec:gradient} stopped at a point satisfying the convergence criterion. Figure~\ref{fig:image}(b) shows the image set of all final iterates. Thus, given a reasonable number of starting points, Algorithm~\ref{sec:gradient} was able to estimate the Pareto front of the considered Rosenbrock's problem. The value space generated by the Riemannian gradient method using others 200 random starting points with image belonging to the box $(0,4)\times(0,4)$ can be seen in Figure~\ref{fig:path}(a). A full point represents a final iterate whereas the beginning of a straight segment represents the corresponding starting point.

 \begin{figure}[h!]
\begin{minipage}[b]{0.50\linewidth}
\begin{figure}[H]
	\centering
		\includegraphics[scale=0.50]{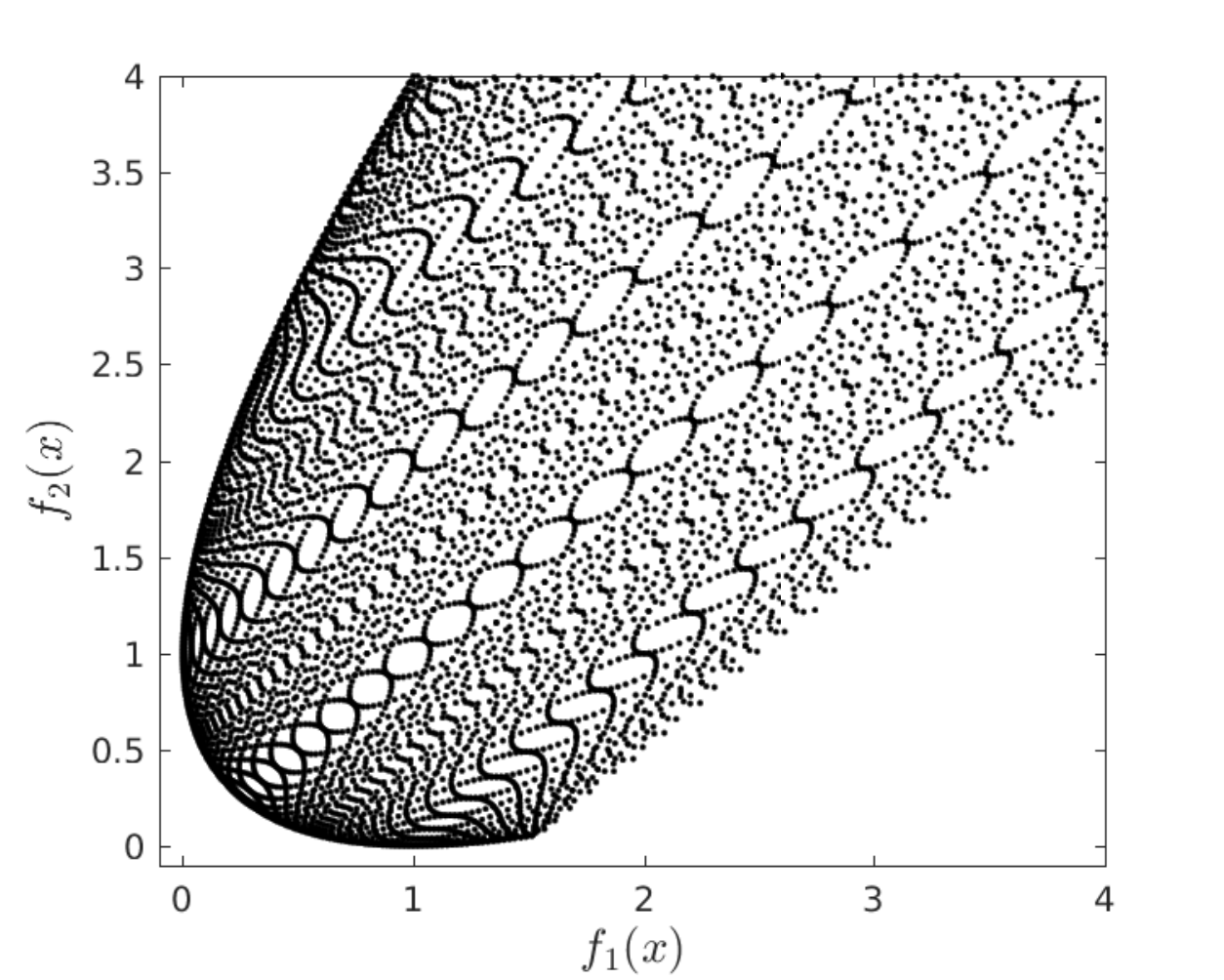}\\
	\footnotesize	(a)
\end{figure}

\end{minipage} \hfill
\begin{minipage}[b]{0.50\linewidth}

\begin{figure}[H]
	\centering
		\includegraphics[scale=0.50]{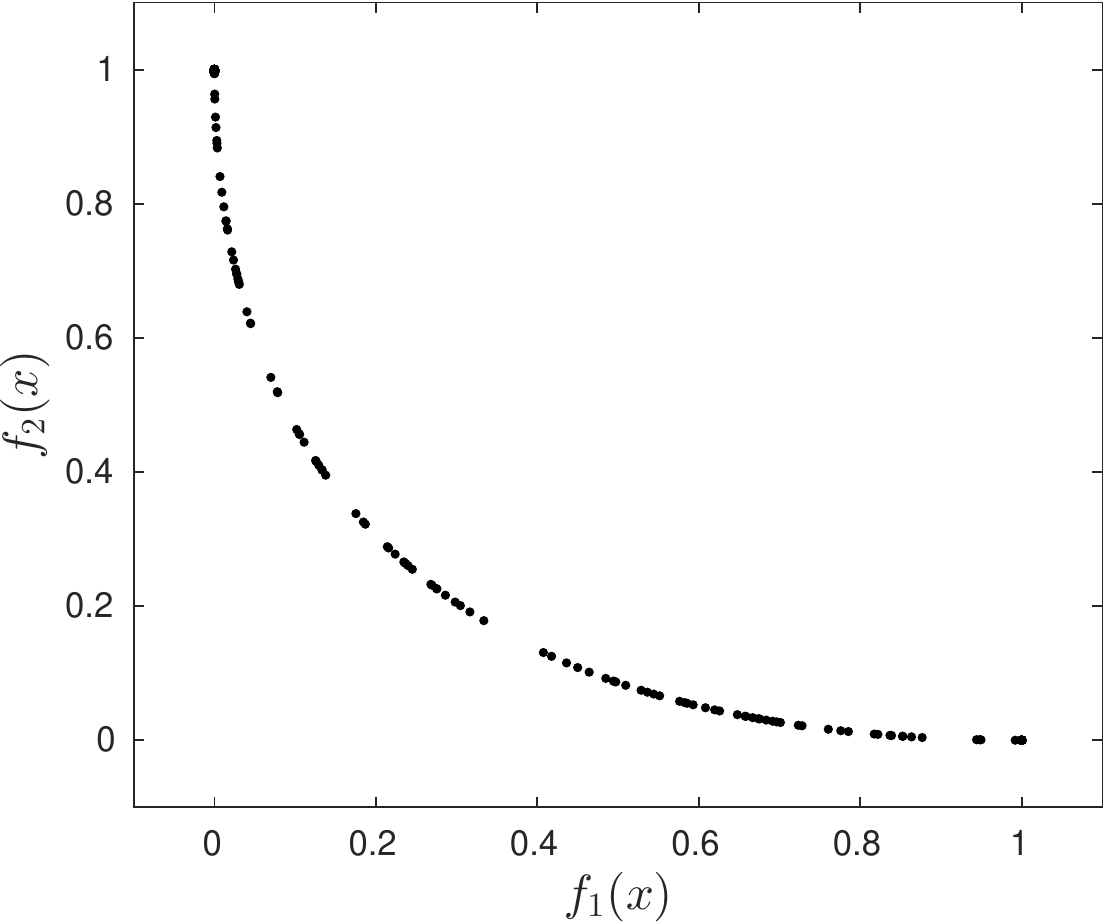}\\
	\footnotesize	(b)
\end{figure}
\end{minipage}\hfill
\caption{(a): Image set of the Rosenbrock's problem {\it around} its Pareto front; (b) value space of Rosenbrock's problem for 1000 random starting points belonging to $(-5,5)\times(-5,5)$.}
\label{fig:image}
\end{figure}

For comparative purposes, we implemented and tested the Euclidean gradient method for minimizing \eqref{eq:ros1}--\eqref{eq:ros2}. In summary, the Euclidean method corresponds to Algorithm~\ref{sec:gradient} with the usual inner product and the exponential map given by $\mbox{exp}_{x}(v)=x+v$. We point out that an equivalent Armijo-type line search employed in the Riemannian case was coded in the Euclidean algorithm. We also run the Euclidean algorithm using the same 1000 starting points belonging to $(-5,5)\times(-5,5)$ considered for the Riemannian algorithm. For each method, Table~\ref{tab:comparative} reports  the percentages of runs that has reached a critical point ($\%$) and, for the successful runs, the median of number of iterations (it), the median of functions evaluations (evalf), and the median of gradient evaluations (evalg). Thus, the reported data in Table~\ref{tab:comparative} represents a typical run of the Riemannian and the Euclidean algorithms. It is worth noting that we considered each evaluation of a coordinate function (resp. gradient) in the calculation of evalf (resp. evalg). Note that the number of steepest descent direction calculations is equal to the number of iterations.

 \begin{table}[h!] 
  \footnotesize
  \centering
\begin{tabular}{|c|c|c|c|c|} \hhline{~*4{-}|}  
\multicolumn{1}{c|}{} & \multicolumn{1}{c|}{\cellcolor[gray]{0.9} $\%$ } & \multicolumn{1}{c|}{\cellcolor[gray]{0.9} it }& \multicolumn{1}{c|}{\cellcolor[gray]{0.9} evalf }& \multicolumn{1}{c|}{\cellcolor[gray]{0.9} evalg }\\ \hline
\cellcolor[gray]{0.9} Riemannian method   & 100.0  & 5.0    & 49.0    & 12.0 \\ \hline
\cellcolor[gray]{0.9} Euclidean method    & 95.1   & 1629.0  & 5721.0  & 3260.0  \\ \hline
\end{tabular}
\caption{Performance of the Riemannian and Euclidean gradient methods in the Rosenbrock's problem.}
\label{tab:comparative}
\end{table}

As can be seen in Table~\ref{tab:comparative}, in the considered Rosenbrock's problem, the Riemannian algorithm is much superior to the Euclidean one. The introduction of a suitable metric that makes $F$ convex with componentwise Lipschitz continuous Jacobian enabled a huge reduction in computational cost to solve the problem. Figure~\ref{fig:path}(b) shows a typical behavior of the methods on the Rosenbrock's problem \eqref{eq:ros1}--\eqref{eq:ros2}. For each method, we plotted the image set of the generated sequence for the particular case where the starting point is $(0.5,0.2)$. The convergence criterion was satisfied with 25 and 1585 iterations for the Riemannian and Euclidean gradient methods, respectively. Due to the small steps sizes performed by the Euclidean method (typically of the order of $10^{-3}$), the corresponding path illustrated in the Figure~\ref{fig:path}(b) appears to be a continuous segment. In its turn, the Riemannian method quickly approaches the Pareto front.

 \begin{figure}[h!]
\begin{minipage}[b]{0.50\linewidth}
\begin{figure}[H]
	\centering
		\includegraphics[scale=0.50]{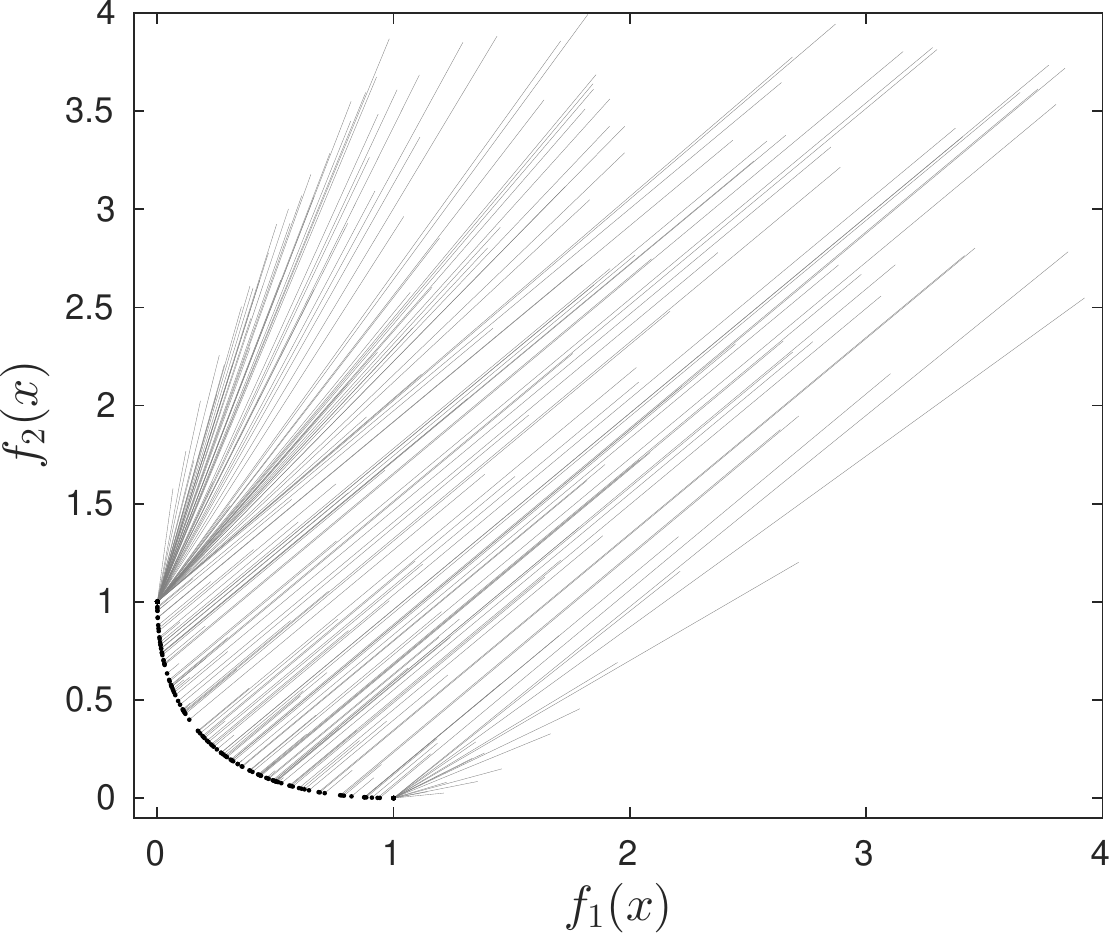}\\
	\footnotesize	(a)
\end{figure}

\end{minipage} \hfill
\begin{minipage}[b]{0.50\linewidth}

\begin{figure}[H]
	\centering
		\includegraphics[scale=0.50]{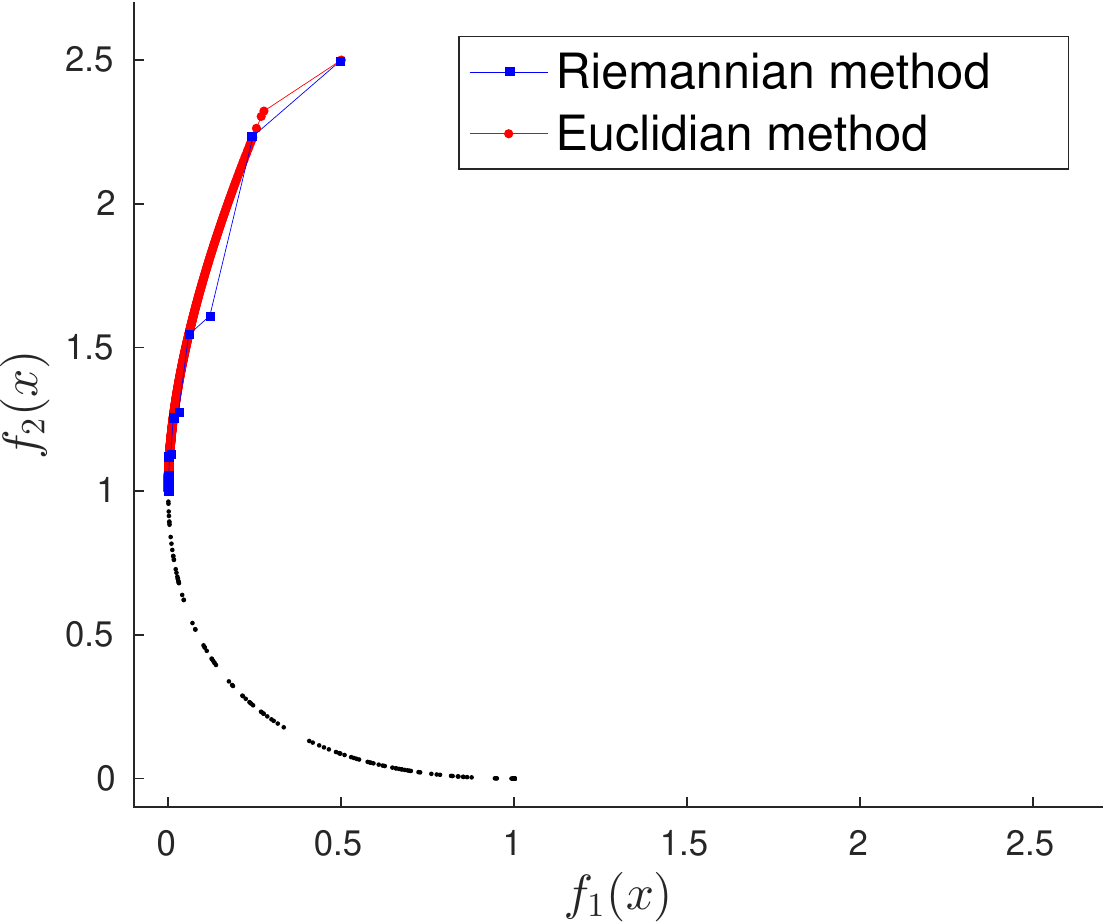}\\
	\footnotesize	(b)
\end{figure}
\end{minipage}\hfill
\caption{(a) Value space of Rosenbrock's problem for 200 starting points with image belonging to the box $(0,4)\times(0,4)$; (b) a typical behavior of the Riemannian and the Euclidean gradient methods on the Rosenbrock's problem.}
\label{fig:path}
\end{figure}

%
%
%

\subsection{Example in the Positive Orthant}

Now we consider the application of Algorithm~\ref{sec:gradient} for minimizing the vector function $F(x)=(f_1(x),\ldots,f_m(x))$ where $f_j(x)$ is given by \eqref{eq:psy}. Note that for the Riemannian manifold  ${\mathcal M}=(\mathbb{R}_{++}^n,G)$ and $x\in \mathbb{R}_{++}^n$, the tangent space $T_x{\mathcal M}$ corresponds to $\R^n$. Thus, problem~\eqref{vproblem} to calculate $v_x$ is directly posed as a quadratic programming problem.

Since in the previous section we solved only a small Rosenbrock's problem, we now consider larger instances of the problem related to Example~\ref{ex:fun.product}. First, we kept the number of objectives equal to two and varied the dimension of the space assigning the following values: $n = 10$, $100$, $400$, and $1000$. In the second set of tests, we set $n=100$ and varied the number of objectives taking $m = 10$, $20$, $100$, and $200$. All the parameters of each function $f_j$ in \eqref{eq:psy} were random generated belonging to $(0, 1)$. Each problem instance was solved 20 times using starting points from a uniform random distribution inside the box $(0, 10)^n$. The results in Table~\ref{tab:varying} are given in the same form as Table~\ref{tab:comparative}.

\begin{table}[h]{ \footnotesize \centering
\begin{minipage}[b]{0.50\linewidth}
\begin{tabular}{|c|c|c|c|c|c|}  \hline
\cellcolor[gray]{.9} $n$ &  \cellcolor[gray]{.9} $m$ & \cellcolor[gray]{.9} $\%$ & \cellcolor[gray]{.9} it & \cellcolor[gray]{.9} evalf & \cellcolor[gray]{.9} evalg \\ \hline
10   & 2 & 100.0 & 26.5  & 117.5  & 55.0  \\
100  & 2 & 100.0 & 71.5  & 220.0  & 145.0 \\
400  & 2 & 100.0 & 273.0 & 622.0 & 548.0 \\
1000 & 2 & 100.0 & 17.0  & 104.0  & 36.0  \\  \hline
\end{tabular}\\
\centering (a)
\end{minipage}\hfill
\begin{minipage}[b]{0.50\linewidth}
\begin{tabular}{|c|c|c|c|c|c|}  \hline
\cellcolor[gray]{.9} $n$ & \cellcolor[gray]{.9} $m$ & \cellcolor[gray]{.9} $\%$ & \cellcolor[gray]{.9} it & \cellcolor[gray]{.9} evalf & \cellcolor[gray]{.9} evalg \\ \hline
100 & 10  & 100.0 & 33.5 & 592.5   & 345.0  \\
100 & 20  & 100.0 & 34.0 & 1039.5  & 700.0  \\
100 & 100 & 100.0 & 24.5 & 3251.5  & 2550.0 \\
100 & 200 & 100.0 & 36.5 & 8864.0 & 7500.0 \\ \hline
\end{tabular}\\
\centering (b)
\end{minipage}\hfill
\caption{Performance of the Riemannian gradient method related to Example~\ref{ex:fun.product} varying: (a) the dimension of the space; (b) the number of objectives.}
\label{tab:varying}
}\end{table}


The highlight of Table~\ref{tab:varying} is that Algorithm~\ref{sec:gradient} was robust with respect to the dimension and to the number of objectives, which is consistent with the theoretical results. The results of the present section suggest that Algorithm~\ref{sec:gradient} is potentially able to solve large problems. Surprisingly, for the first set of problems, a fewer number of function/gradient evaluations were required for the case where $n = 1000$ compared to smaller instances of the problem. 

\subsection{Example in the Cone of Symmetric Positive Definite Matrices}
Let $\mathcal{M}$ be the Riemannian manifold $({\mathbb P}^n_{++}, \langle \cdot , \cdot \rangle)$, where the inner product is defined as in Example~\ref{ex:matrix1}. For $X \in \mathbb{P}_{++}^n$, the tangent space  $T_X{\mathcal M}$ corresponds to the set of the symmetric matrices ${\mathbb P}^n$. In our implementation, in order to compute the steepest descent direction, in addition to $\lambda$, the unknowns of problem~\eqref{vproblem} are the $(n^2+n)/2$ entries of the lower triangular part of the symmetric matrix $u$.

Given $X\in {\mathbb P}^n_{++}$ and $V \in {\mathbb P}^n$, direct calculations shows that the exponential map in \eqref{eq:expM} can be rewritten as $\exp_X(V)=X e^{X^{-1}V}$. For computing the inverse of matrix $X$, we used the LAPACK routine {\texttt dpotri} which uses the Cholesky factorization of $X$. For computing matrix exponentials, we used {\texttt dgpadm} routine of EXPOKIT package~\cite{sidje1998expokit}. It should be noted that {\texttt dpotri} and {\texttt dgpadm} are dense routines. 

We considered bicriteria and three-criteria problem instances related to Example~\ref{ex:matrix1}. The parameters of function \eqref{eq:exmatrix} were randomly generated belonging to $(0,1)$. For each instance, we run the Riemannian gradient method 20 times using random starting points with eigenvalues belonging to the interval $(0, 100)$. The results in Table~\ref{tab:matrix} show that Algorithm~\ref{sec:gradient} solved all the instances with a moderate computational effort. It is worth mentioning that in a typical iteration, the first trial step size of Strategy~\ref{armijo.step} defined by
\begin{equation}\label{eq:firstrial}
{\hat t}_{k_0} = \max \{ t_{\min}, \min \{ {\bar t}_{k_0},t_{\max} \} \}, \qquad
{\bar t}_{k_0} = \left\{\begin{array}{ll}
                1/\left\|v_0\right\|, & \mbox{ if } k=0, \\
               t_{k-1}  \left\|v_{k-1}\right\|^2/\left\|v_k\right\|^2, & \mbox{ if } k\geq 1,\\     
               \end{array}\right.
\end{equation}
satisfies the sufficient descent condition \eqref{eq:sl}. Indeed, as it can be seen Table~\ref{tab:matrix}, the values reported in evalf columns are slightly greater than the corresponding number of iterations times the number of objectives $m$. We observe that the choice \eqref{eq:firstrial} corresponds to the safeguarded Shanno and Phua~\cite{shanno} recommendation and was first proposed in the multiobjective optimization setting in \cite{PerezPrudente2018}.

\begin{table}[h]{ \footnotesize \centering
\begin{minipage}[b]{0.50\linewidth}
\begin{tabular}{|c|c|c|c|c|c|}  \hline
\cellcolor[gray]{.9} $n$ & \cellcolor[gray]{.9} $m$ & \cellcolor[gray]{.9} $\%$ & \cellcolor[gray]{.9} it & \cellcolor[gray]{.9} evalf & \cellcolor[gray]{.9} evalg \\ \hline
5     & 2 & 100.0 &   8.0 &  26.5 &  18.0 \\
10   & 2 & 100.0 &   13.0 &  38.5 & 28.0  \\
20   & 2 &100.0 &  18.0 & 49.0 & 38.0  \\
50   & 2 & 100.0 &  27.0 & 64.0 & 56.0 \\ \hline
\end{tabular}\\
\centering (a)
\end{minipage}\hfill
\begin{minipage}[b]{0.50\linewidth}
\begin{tabular}{|c|c|c|c|c|c|}  \hline
\cellcolor[gray]{.9} $n$ & \cellcolor[gray]{.9} $m$  & \cellcolor[gray]{.9} $\%$ & \cellcolor[gray]{.9} it & \cellcolor[gray]{.9} evalf & \cellcolor[gray]{.9} evalg \\ \hline
5     & 3 &100.0 & 7.0  & 28.5  & 24.0  \\
10   & 3 &100.0 & 12.0  & 45.5   & 39.0  \\
20   & 3 & 100.0 &   18.0 & 68.0 & 57.0  \\
50   & 3 &100.0 &   28.0 & 91.0 & 87.0 \\ \hline
\end{tabular}\\
\centering (b)
\end{minipage}\hfill
\caption{Performance of the Riemannian gradient method related to Example~\ref{ex:matrix1} for: (a) bicriteria problems; (b) three-criteria problems.}
\label{tab:matrix}
}\end{table}


Finally, we report that Algorithm~\ref{sec:gradient} converges with a single iteration when applied to instances of Example~\ref{ex:matrix2}. The considered metric makes it possible to explore the structure of the problem turning it into a trivial problem from the Riemannian perspective.
 
\section{Conclusions}\label{conclusion}
In this paper,  the behavior of  the  steepest descent method for multiobjective optimization on Riemannian manifolds  with lower bounded sectional curvature is analyzed.  It  would be interesting to study stochastic versions of this  method.  An  interesting question to be also  investigated is the extension  and analysis of subgradient method in  this new setting.


%
%




\begin{thebibliography}{10}

\bibitem{PerezPrudente2018}
Lucambio~P\'{e}rez, L.R., Prudente, L.F.: Nonlinear conjugate gradient methods
  for vector optimization.
\newblock SIAM J. Optim. \textbf{28}(3), 2690--2720 (2018).


\bibitem{gphz}
Gon{\c{c}}alves, M.L.N., Prudente, L.F.: {On the extension of the Hager-Zhang
  conjugate gradient method for vector optimization}.
\newblock Technical report pp. 1--19 (2018).

\bibitem{Bento2018}
Bento, G.C., Cruz~Neto, J.X., L\'{o}pez, G., Soubeyran, A., Souza, J.C.O.: The
  proximal point method for locally {L}ipschitz functions in multiobjective
  optimization with application to the compromise problem.
\newblock SIAM J. Optim. \textbf{28}(2), 1104--1120 (2018).


\bibitem{Montonen2018}
Montonen, O., Karmitsa, N., M\"{a}kel\"{a}, M.M.: Multiple subgradient descent
  bundle method for convex nonsmooth multiobjective optimization.
\newblock Optimization \textbf{67}(1), 139--158 (2018).


\bibitem{Carrizo2016}
Carrizo, G.A., Lotito, P.A., Maciel, M.C.: Trust region globalization strategy
  for the nonconvex unconstrained multiobjective optimization problem.
\newblock Math. Program. \textbf{159}(1-2, Ser. A), 339--369 (2016).


\bibitem{Fliege2016}
Fliege, J., Vaz, A.I.F.: A method for constrained multiobjective optimization
  based on {SQP} techniques.
\newblock SIAM J. Optim. \textbf{26}(4), 2091--2119 (2016).

\bibitem{Morovati2016}
Morovati, V., Pourkarimi, L., Basirzadeh, H.: Barzilai and {B}orwein's method
  for multiobjective optimization problems.
\newblock Numer. Algorithms \textbf{72}(3), 539--604 (2016).


\bibitem{FliegeSvaiter2000}
Fliege, J., Svaiter, B.F.: Steepest descent methods for multicriteria
  optimization.
\newblock Math. Methods Oper. Res. \textbf{51}(3), 479--494 (2000).


\bibitem{FukudaDrummond2013}
Fukuda, E.H., Gra\~{n}a Drummond, L.M.: Inexact projected gradient method for
  vector optimization.
\newblock Comput. Optim. Appl. \textbf{54}(3), 473--493 (2013).

\bibitem{FukudaDrummond2011}
Fukuda, E.H., Gra\~{n}a Drummond, L.M.: On the convergence of the projected
  gradient method for vector optimization.
\newblock Optimization \textbf{60}(8-9), 1009--1021 (2011).

\bibitem{Drummond2005}
Gra\~{n}a Drummond, L.M., Svaiter, B.F.: A steepest descent method for vector
  optimization.
\newblock J. Comput. Appl. Math. \textbf{175}(2), 395--414 (2005).

\bibitem{Drummond2004}
Gra\~{n}a Drummond, L.M., Iusem, A.N.: A projected gradient method for vector
  optimization problems.
\newblock Comput. Optim. Appl. \textbf{28}(1), 5--29 (2004).

\bibitem{BelloCruzBouza2014}
Bello~Cruz, J.Y., Bouza~Allende, G.: A steepest descent-like method for
  variable order vector optimization problems.
\newblock J. Optim. Theory Appl. \textbf{162}(2), 371--391 (2014).

\bibitem{BelloCruz2013}
Bello~Cruz, J.Y.: A subgradient method for vector optimization problems.
\newblock SIAM J. Optim. \textbf{23}(4), 2169--2182 (2013).

\bibitem{FliegeVazVicente2018}
Fliege, J., Vaz, A.I.F., Vicente, L.N.: Complexity of gradient descent for
  multiobjective optimization.
\newblock Optimization Methods and Software \textbf{0}, 1--11 (2018).


\bibitem{FerreiraLouzeiroPrudente2018}
{Ferreira}, O.P., {Louzeiro}, M.S., {Prudente}, L.F.: {Gradient Method for
  Optimization on Riemannian Manifolds with Lower Bounded Curvature}.
\newblock To appear in SIAM J. Optim. ArXiv e-prints.  (2018).


\bibitem{BentoFerreiraMelo2017}
Bento, G.C., Ferreira, O.P., Melo, J.G.: Iteration-{C}omplexity of {G}radient,
  {S}ubgradient and {P}roximal {P}oint {M}ethods on {R}iemannian {M}anifolds.
\newblock J. Optim. Theory Appl. \textbf{173}(2), 548--562 (2017).

\bibitem{JeurisVandebrilVandereycken2012}
Jeuris, B., Vandebril, R., Vandereycken, B.: A survey and comparison of
  contemporary algorithms for computing the matrix geometric mean.
\newblock Electron. Trans. Numer. Anal. \textbf{39}, 379--402 (2012)

\bibitem{Rapcsak1997}
Rapcs{\'a}k, T.: Smooth nonlinear optimization in {$\bold R^n$},
  \emph{Nonconvex Optimization and its Applications}, vol.~19.
\newblock Kluwer Academic Publishers, Dordrecht (1997).

\bibitem{SraHosseini2015}
Sra, S., Hosseini, R.: Conic geometric optimization on the manifold of positive
  definite matrices.
\newblock SIAM J. Optim. \textbf{25}(1), 713--739 (2015).


\bibitem{ZhangReddiSra2016}
{Zhang}, H., {Reddi}, S.J., {Sra}, S.: Fast stochastic optimization on
  {R}iemannian manifolds.
\newblock ArXiv e-prints pp. 1--17 (2016).


\bibitem{EdelmanAriasSmith1999}
Edelman, A., Arias, T.A., Smith, S.T.: The geometry of algorithms with
  orthogonality constraints.
\newblock SIAM J. Matrix Anal. Appl. \textbf{20}(2), 303--353 (1999).


\bibitem{Smith1994}
Smith, S.T.: Optimization techniques on {R}iemannian manifolds.
\newblock In: Hamiltonian and gradient flows, algorithms and control,
  \emph{Fields Inst. Commun.}, vol.~3, pp. 113--136. Amer. Math. Soc.,
  Providence, RI (1994)

\bibitem{Luenberger1972}
Luenberger, D.G.: The gradient projection method along geodesics.
\newblock Management Sci. \textbf{18}, 620--631 (1972)

\bibitem{UdristeLivro1994}
Udri{\c{s}}te, C.: Convex functions and optimization methods on {R}iemannian
  manifolds, \emph{Mathematics and its Applications}, vol. 297.
\newblock Kluwer Academic Publishers Group, Dordrecht (1994).


\bibitem{AbsilMahonySepulchre2008}
Absil, P.A., Mahony, R., Sepulchre, R.: Optimization algorithms on matrix
  manifolds.
\newblock Princeton University Press, Princeton, NJ (2008).
\newblock \url{https://doi.org/10.1515/9781400830244}.
\newblock With a foreword by Paul Van Dooren

\bibitem{NesterovTodd2002}
Nesterov, Y.E., Todd, M.J.: On the {R}iemannian geometry defined by
  self-concordant barriers and interior-point methods.
\newblock Found. Comput. Math. \textbf{2}(4), 333--361 (2002).

\bibitem{LiMordukhovichWang2011}
Li, C., Mordukhovich, B.S., Wang, J., Yao, J.C.: Weak sharp minima on
  {R}iemannian manifolds.
\newblock SIAM J. Optim. \textbf{21}(4), 1523--1560 (2011).


\bibitem{LiYao2012}
Li, C., Yao, J.C.: Variational inequalities for set-valued vector fields on
  {R}iemannian manifolds: convexity of the solution set and the proximal point
  algorithm.
\newblock SIAM J. Control Optim. \textbf{50}(4), 2486--2514 (2012).

\bibitem{WangLiWangYao2015}
Wang, X., Li, C., Wang, J., Yao, J.C.: Linear convergence of subgradient
  algorithm for convex feasibility on {R}iemannian manifolds.
\newblock SIAM J. Optim. \textbf{25}(4), 2334--2358 (2015).


\bibitem{WangLiYao2015}
Wang, X.M., Li, C., Yao, J.C.: Subgradient projection algorithms for convex
  feasibility on {R}iemannian manifolds with lower bounded curvatures.
\newblock J. Optim. Theory Appl. \textbf{164}(1), 202--217 (2015).


\bibitem{Manton2015}
Manton, J.H.: A framework for generalising the {N}ewton method and other
  iterative methods from {E}uclidean space to manifolds.
\newblock Numer. Math. \textbf{129}(1), 91--125 (2015).


\bibitem{ZhangSra2016}
Zhang, H., Sra, S.: First-order methods for geodesically convex optimization.
\newblock JMLR: Workshop and Conference Proceedings \textbf{49}(1), 1--21
  (2016).
\newblock \url{https://arxiv.org/abs/1602.06053}

\bibitem{WangLiLopezYao2016}
Wang, J., Li, C., Lopez, G., Yao, J.C.: Proximal point algorithms on {H}adamard
  manifolds: linear convergence and finite termination.
\newblock SIAM J. Optim. \textbf{26}(4), 2696--2729 (2016).


\bibitem{BentoFerreiraOliveira2012}
Bento, G.C., Ferreira, O.P., Oliveira, P.R.: Unconstrained steepest descent
  method for multicriteria optimization on {R}iemannian manifolds.
\newblock J. Optim. Theory Appl. \textbf{154}(1), 88--107 (2012).


\bibitem{BentoNetoSantos2013}
Bento, G.C., da~Cruz~Neto, J.X., Santos, P.S.M.: An inexact steepest descent
  method for multicriteria optimization on {R}iemannian manifolds.
\newblock J. Optim. Theory Appl. \textbf{159}(1), 108--124 (2013).


\bibitem{doCarmo1992}
do~Carmo, M.P.: Riemannian geometry.
\newblock Mathematics: Theory \& Applications. Birkh\"auser Boston, Inc.,
  Boston, MA (1992).
\newblock { doi:10.1007/978-1-4757-2201-7}.

\bibitem{Sakai1996}
Sakai, T.: Riemannian geometry, \emph{Translations of Mathematical Monographs},
  vol. 149.
\newblock American Mathematical Society, Providence, RI (1996).

\bibitem{da1998geodesic}
da~Cruz~Neto, J., De~Lima, L., Oliveira, P.: Geodesic algorithms in riemannian
  geometry.
\newblock Balkan J. Geom. Appl \textbf{3}(2), 89--100 (1998)

\bibitem{burachik1995full}
Burachik, R., Drummond, L.M.G., Iusem, A.N., Svaiter, B.F.: Full convergence of
  the steepest descent method with inexact line searches.
\newblock Optimization \textbf{32}(2), 137--146 (1995).


\bibitem{BeckTeboulle2009}
Beck, A., Teboulle, M.: A fast iterative shrinkage-thresholding algorithm for
  linear inverse problems.
\newblock SIAM J. Imaging Sci. \textbf{2}(1), 183--202 (2009).

\bibitem{batista2018extragradient}
Batista, E., Bento, G., Ferreira, O.: An extragradient-type algorithm for
  variational inequality on hadamard manifolds.
\newblock arXiv preprint arXiv:1804.09292  (2018)

\bibitem{Lang1999}
Lang, S.: Fundamentals of differential geometry, \emph{Graduate Texts in
  Mathematics}, vol. 191.
\newblock Springer-Verlag, New York (1999).


\bibitem{LinHeZhang2013}
Lin, B., He, X., Zhang, C., Ji, M.: Parallel vector field embedding.
\newblock J. Mach. Learn. Res. \textbf{14}, 2945--2977 (2013)

\bibitem{Petersen2016}
Petersen, P.: Riemannian geometry, \emph{Graduate Texts in Mathematics}, vol.
  171, third edn.
\newblock Springer, Cham (2016).

\bibitem{lppwolfe}
Lucambio~P\'{e}rez, L.R., Prudente, L.F.: {A Wolfe line search algorithm for
  vector optimization}.
\newblock Technical report pp. 1--21 (2018).
\newblock
  \url{http://www.optimization-online.org/DB_HTML/2018/07/6744.html}

\bibitem{algencan}
Birgin, E.G., Mart{\'\i}nez, J.M.: Practical augmented Lagrangian methods for
  constrained optimization.
\newblock SIAM (2014)

\bibitem{mauriciobenar&fliege}
Fliege, J., Gra{\~{n}}a~Drummond, L.M., Svaiter, B.F.: {Newton's Method for
  Multiobjective Optimization}.
\newblock SIAM Journal on Optimization \textbf{20}(2), 602--626 (2009)

\bibitem{sidje1998expokit}
Sidje, R.B.: Expokit: A software package for computing matrix exponentials.
\newblock ACM Transactions on Mathematical Software (TOMS) \textbf{24}(1),
  130--156 (1998)

\bibitem{shanno}
Shanno, D.F., Phua, K.H.: Remark on algorithm 500: Minimization of
  unconstrained multivariate functions.
\newblock ACM Transactions on Mathematical Software \textbf{6}(4), 618--622
  (1980)

\end{thebibliography}

\end{document}